
\documentclass{article}
\usepackage{t1enc,amsfonts,latexsym,amsmath,amsthm,verbatim,amssymb,graphics,pstool,color,wrapfig,xcolor}

\DeclareMathOperator*{\argmin}{arg\,min }
\DeclareMathOperator*{\argmax}{arg\,max }
\newcommand{\fro}[1]{\left\| #1\right\|_2^2}
\newcommand{\scal}[1]{\left \langle #1 \right \rangle}
\newcommand{\para}[1]{\left( #1\right)}

\newtheorem{theorem}{Theorem}[section]
\newtheorem{proposition}[theorem]{Proposition}
\newtheorem{lemma}[theorem]{Lemma}
\newtheorem{corollary}[theorem]{Corollary}

\def\re{{\mathsf {Re}}}

\def\D{{\mathbb D}}

\def\C{{\mathbb C}}
\def\K{{\mathcal K}}

\def\G{{\mathcal G}}
\def\Q{{\mathcal Q}}

\def\R{{\mathbb R}}
\def\S{{\mathcal{S}}}
\def\kfi{{\mathcal{K}_{\mu}}}
\def\kfir{{\mathcal{K}_{\mu,reg}}}
\def\ksi{{\mathcal{K}_k}}
\def\ksir{{\mathcal{K}_{k,reg}}}

\def\ep{{{ \epsilon}}}

\def\card{{\text{card}}}
\def\supp{{\text{supp }}}

\newcommand{\skal}[1]{\langle #1 \rangle}

\title{An unbiased approach to compressed sensing}
\author{Marcus Carlsson\thanks{Centre for Mathematical Sciences , Lund University,  {mc,gerosa,calle@maths.lth.se}}
\and Daniele Gerosa\footnotemark[1]
\and
Carl Olsson\footnotemark[1]
\thanks{Electrical Engineering, Chalmers University of Technology, caols@chalmers.se}
}
\date{}

\begin{document}
\maketitle
\begin{abstract}
In compressed sensing a sparse vector is approximately retrieved from an under-determined equation system $Ax=b$. Exact retrieval would mean solving a large combinatorial problem which is well known to be NP-hard. For $b$ of the form $Ax_0+\ep$, where $x_0$ is the ground truth and $\ep$ is noise, the ``oracle solution'' is the one you get if you a priori know the support of $x_0$, and is the best solution one could hope for. We provide a non-convex functional whose global minimum is the oracle solution, with the property that any other local minimizer necessarily has high cardinality. We provide estimates of the type
$\|\hat x-x_0\|_2\leq C\|\ep\|_2$ with constants $C$ that are significantly lower than for competing methods or theorems, and our theory relies on soft assumptions on the matrix $A$, in comparison with standard results in the field.


The framework also allows to incorporate a priori information on the cardinality of the sought vector. In this case we show that despite being non-convex, our cost functional has no spurious local minima and the global minima is again the oracle solution, thereby providing the first method which is guaranteed to find this point for reasonable levels of noise, without resorting to combinatorial methods.

{\bf Keywords:}\textit{ compressed sensing, regularization, non-convex/non-smooth optimization.}
{\bf MSC2010:}{ 49J25, 49M20, 65K10, 90C26, 90C27.}

\end{abstract}

\section{Introduction}\label{sec:intro}
\subsection{Background}

We consider the classical compressed sensing problem of minimizing the cardinality $\card(x)=\|x\|_0$ of an approximate solution to an underdetermined equation system $Ax=b$, i.e. \begin{equation}\label{l0prob}\argmin_{x:~\|Ax-b\|_2<\eta}\card(x),
\end{equation} where $\eta>0$
is some allowed tolerance of the error and $x_0$ lies in $\R^n$ or $\C^n$. Problem \eqref{l0prob} is NP-hard \cite{natarajan1995sparse} and a popular approach is to replace $\card(x)$ with the convex function $\|x\|_1$, i.e. \begin{equation}\label{l1prob}\argmin_{x:~\|Ax-b\|_2<\eta}\|x\|_1.\end{equation} This method goes back (at least) to the 70's (see the introduction of \cite{candes2008enhancing} for a nice historical overview) but received increasing attention in the late 90's due to the work by Chen, Donoho and Saunders \cite{chen2001atomic} on what they called \emph{basis pursuit}, which amounts to solving
\begin{equation}\label{l1probdual}\argmin\left\{\lambda\|x\|_1+\frac{1}{2}\|Ax-b\|_2^2\right\}\end{equation}
for a suitable choice of parameter $\lambda$, playing the role of $\eta$ in \eqref{l1prob}. In fact, \eqref{l1probdual} is the dual problem of \eqref{l1prob} in the sense that for each $\eta$ there is a $\lambda$ such that the solution of \eqref{l1prob} and \eqref{l1probdual} coincides. The method received massive attention after the works of Donoho, Cand\'{e}s and coworkers in the early 2000, and the term compressed sensing was coined. In \cite{candes2006stable}, Cand\'{e}s, Romberg and Tao proved the surprising result that, given a $k$-sparse vector $x_0$ and a measurement
\begin{equation}
\label{nise}b=Ax_0+\ep,
\end{equation}
where $\ep$ is Gaussian noise, solving \eqref{l1prob} yields (for a suitable choice of $\eta$) a vector $\hat x$ that satisfies \begin{equation}\label{crt}
\|\hat x-x_0\|_2<C_k\|\ep\|_2,
\end{equation}
where $C_k$ is a constant. Arguing that it is impossible to beat a linear dependence on the noise (even knowing the true support of $x_0$ a priori), the estimate \eqref{crt} led the authors to conclude that ``no other method can significantly outperform this''. The result holds given certain assumptions on the matrix $A$, related to the Restricted Isometry Property (RIP) of $A$, which in a separate publication (Theorem 1.5, \cite{candes2005decoding}) was shown to hold with ``overwhelming probability''.

These results give the impression that the theory is more or less complete and that improvements only can be marginal. However, what is not so well known is that the mentioned results usually do not apply to regular applications of the framework. For example,
that ``statement $A(n)$'' holds with ``overwhelming probability'' only entails that the probability of $A(n)$ being false decays exponentially with the size $n$ of the application, hence a statement can hold with ``overwhelming probability'' and at the same time be false for most moderately sized applications (in some applications the dimension of the signals, in our case \(n\), is modest. See for instance \cite{feng-au-valaee-tan-2010}. For a more extensive survey on Compressed Sensing applications, see \cite{qaisar-bilal-iqbal-naureen-lee-2013}). In addition other assumptions need to be fulfilled for the ``overwhelming probability''-results to kick in, for example Theorem 1.5 in \cite{candes2005decoding} requires (according to the text below the theorem) that $k/n$ is of magnitude $10^{-4}$, which rules out most applications independent of whether $n$ is large or not. While the theory has been improved since 2005, the main problem that the results often do not apply to standard applied settings, remains. For example the recent works~\cite{adcock2016generalized,adcock2017breaking,candes-etal-acm-2011} provide \textit{asymptotic} theorems about when compressed sensing works in concrete setups. Moreover, in \cite{wainwright2009sharp} it was even shown that the method fails with probability tending to 1, as a function of $n$, if the ratio of $k,n$ and $m$ (the amount of measurements) is held fixed.

In addition to all this, whereas very strong recovery results were reported e.g.~in \cite{candes2006robust,chartrand2007exact,donoho2006most} for the case of \textit{exact data} $b=Ax_0$, in the presence of noise the method gives a well known bias (see e.g. \cite{fan2001variable,mazumder2011sparsenet}). The $\ell^1$ term not only has the (desired) effect of forcing many entries in $x$ to 0, but also the (undesired) effect of diminishing the size of the non-zero entries. This is clearly visible even in the one-dimensional situation; the function $\R\ni x\mapsto \lambda|x|+\frac{1}{2}|x-x_0|^2$ has its minimum shifted towards 0 from the sought point $x_0$. This has led to a large amount of non-convex suggestions to replace the $\ell^1$-penalty, see e.g. \cite{attouch2013convergence,blumensath2008iterative,blumensath2009iterative,bredies2015minimization,breheny2011coordinate,candes2008enhancing,chartrand2007exact,fan2001variable,fan2004nonconcave,fan2014strong,loh2013regularized,loh2017support,mazumder2011sparsenet,pan2015relaxed,selesnick2017sparse,wang2014optimal,zhang2012general,zou2008one}.
However, among these there is no clear winner and still $\ell^1-$methods seems to be the standard choice among engineers, maybe also due to its simplicity.
A fairly well-known non-convex alternative is the Minimax Concave Penalty (MCP) by Zhang, which was coined \textit{nearly unbiased} since the results in  \cite{zhang2010nearly} imply that the method does find the \textit{oracle solution} with probability tending to one under the assumptions of that paper. The ``oracle solution'' is sort of the holy grail of compressed sensing, and aside from Zhang's work and this publication, there seems to be no reliable methods (with proofs) of how to find it.

\subsection{Quadratic envelopes}\label{secQ}
In this paper we analyze two different methods to find the oracle solution, one which actually coincides with Zhang's MCP-penalty and a more intricate (and reliable) one that assumes a priori knowledge of the sparsity level $k$. In fact, these two are the tip of an iceberg of possible methods based on the ``Quadratic Envelope'', which we now introduce. Consider the general problem of minimizing \begin{equation}\label{genprob}
\K(x)=f(x)+\|Ax-b\|_2^2\end{equation} where $f$ is some non-convex penalty and $x$ is a vector in some linear space, not necessarily $\R^n$.
The standard non-convex example mentioned in most introductions to papers on compressed sensing is $f(x)=\mu\card(x)$ for some trade off parameter $\mu>0$.
However, if the desired cardinality $k$ is known a priori, we can take $f$ to be the indicator function $\iota_{P_k}$ of the set $P_k=\{x:\card (x)\leq k\}$ in which case \eqref{genprob} reduces to
\begin{equation}\label{agt1intro}
\argmin_{\card (x)\leq k}\|Ax-b\|_2.
\end{equation}

In \cite{carlsson2018convex} \emph{quadratic envelope} $\Q_2(f)$ was introduced, where $\Q_2$ is the \emph{quadratic biconjugate} and \( f : \mathcal{V} \to \mathbb{R} \cup \{ \infty  \} \) can be any functional on a separable Hilbert space \( \mathcal{V} \); apart from the name, this transform was introduced already in \cite{carlsson2016convexification} and goes back to the work of Larsson, Olsson \cite{larsson-olsson-ijcv-2016}. It is defined as \begin{equation}\label{Qdef} \Q_2 (f)(x) = \sup_{\alpha\in \R,~y\in\mathcal{V}} \{\alpha-\|x-y\|^2:~ \alpha-\|\cdot-y\|^2\leq f \} \end{equation} see Figure \ref{f456} (taken from \cite{carlsson2018convex}) for an illustration. An explicit form for \( \Q_2 \) is not always possible, but it is for the two functions that this paper examines, see \eqref{l0000} and \eqref{lust1}. The quadratic envelope has also the property that $\Q_2 (f) (x)+\|x\|_2^2$ is the lower semi-continuous convex envelope of $f(x)+\|x\|_2^2$. The relationship between
\begin{equation}\label{genprobS}
\K_{reg}(x)=\Q_2(f)(x)+\|Ax-b\|_2^2\end{equation}
and the original functional in \eqref{genprob} was investigated in \cite{carlsson2018convex}. The inequality $\K_{reg}\leq \K$ is immediate by \eqref{Qdef} and will be used throughout.

\begin{wrapfigure}{r}{0.4\textwidth}
	\includegraphics[width=0.4\textwidth]{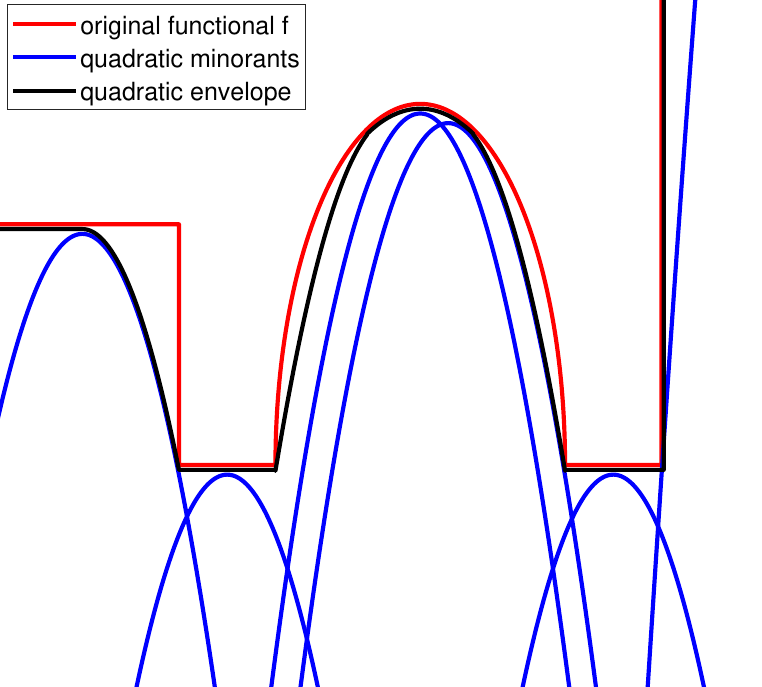}
	\caption{Illustration of a non-convex function $f$ (red) and its quadratic envelope $Q_2(f)$ (black). The black graph lies slightly below for illustration only.}\label{f456}
\end{wrapfigure} Given that $\|A\|_{\text{op}}<1$ (which always can be achieved by rescaling), the main result of \cite{carlsson2018convex} is that the set of local minimizers to \eqref{genprobS} is a subset of the local minimizers of \eqref{genprob}, and most importantly that the global minimizers coincide.\footnote{For the functionals considered in this paper the condition $\|A\|_{\text{op}}<1$ can be substantially relaxed, as we will explain further below.}

In the particular case of $f(x)=\mu \card(x)$, which is the first instance considered in this paper,  the functional \eqref{genprobS} has previously been introduced by Zhang \cite{zhang2010nearly} under the name Minimax Concave Penalty (MCP) and independently by Aubert, Blanc-Feraud and Soubies \cite{soubies-etal-siims-2015} under the name $CE\ell0$. It also shows up in earlier publications, for example (2.4) in \cite{fan2001variable}, but it seems like \cite{zhang2010nearly} is the first comprehensive performance study and  \cite{soubies-etal-siims-2015} the first publication where the connection with convex envelopes appears. For this choice of $f$, the value of the contributions of the present paper is mainly theoretical, which goes much beyond what was previously known. In particular we show that the global minimizer with the MCP-penalty (i.e.~$\Q_2(\card)$) is the oracle solution (for an appropriate range of the parameter $\mu$); hence it follows that the MCP is actually \textit{unbiased,} not merely \textit{nearly unbiased} as claimed in \cite{zhang2010nearly}.

To clarify what we mean by this, we note that it is easy to prove that the error in the oracle solution depends linearly on the noise, and hence the expectation of the error will be zero as long as the expectation of the noise is zero. In this sense any method finding the oracle solutions will be unbiased, which justifies the title of the paper.

The second penalty under consideration in this paper, $\Q_2(\iota_{P_k})$, is a new object that has only appeared previously in earlier publications by the authors of the present article. It also has the capacity of finding the oracle solution and the benefit that it does not rely on an appropriate parameter choice $\mu$, as long as the model order is known.
In contrast to the MCP-penalty (and most other previously studied sparsity priors) it is not separable but assigns a penalty that depends on the number of non-zero elements.
This leads to significant differences with respect to the optimization landscape and the distribution of stationary points that we will study in this paper.
In this article we provide theoretical results of the type \eqref{crt} for the two concrete functionals $\Q_2(\card)$ and $\Q_2 (\iota_{P_k})$. A more extensive discussion of previous results concerning MCP/$CE\ell0$ is found in Section \ref{review}, as well as other related results on non-convex optimization.

\subsection{Contributions}

A clear drawback with non-convex optimization schemes is that algorithms are bound to get stuck in local minima, and in concrete situations it is hard to determine whether this is the case or not. In the present article we give simple conditions which imply that the global minima of \eqref{genprobS} for $f(x)=\mu\card(x)$ \textit{is the oracle solution}, and moreover that any local minima  necessarily has a high cardinality unless it is the global minima. Hence, if a sparse local minima is found one can be sure that it is the oracle solution. In the case of $f=\iota_{P_k}$ we take this one step further and give conditions under which \eqref{genprobS} has a unique local minimizer, which hence must be the oracle solution and also the solution to the original problem \eqref{agt1intro}.

To be more precise, when the ``measurement'' $b$ has the form $b=Ax_0+\ep$ and $x_0$ is a sparse vector, we significantly improve the state of the art in compressed sensing in a number of ways. Firstly, the conditions on $A$ hold in greater generality, in the sense that our counterpart to conditions such as ``small Restricted Isometry Property-values'' or ``small mutual coherence'' (see e.g. \cite{foucart2013invitation}) hold to a much greater extent than existing theory for other approaches such as $\ell^1$-minimization or Iterative Hard Thresholding (IHT). Secondly, since the global minimizer of our functionals is the oracle solution, we obtain an estimate corresponding to \eqref{crt} where the involved constants are significantly smaller than $C_k$ (or other constants with a similar role found in the references). Thirdly, we show numerically that Forward-Backward Splitting (FBS) finds this in scenarios when competitors fail, thereby providing novel robust completely unbiased algorithms for compressed sensing (at least in the setting when $A$ has normalized Gaussian random columns). 

In Section~\ref{sec:mainResults} we present highlights from the theory, show some numerical results and compare with the traditional $\ell^1$-method \eqref{l1probdual}. In \ref{review} we give a brief review of the field. The remainder of the paper, Sections \ref{sec:unique}-\ref{seccardfix}, are devoted to developing the theory.

\section{Main Results and Innovations}\label{sec:mainResults}
Again, we will investigate minimizers of \eqref{genprobS} for the two  penalties $\Q_2(\mu\card)$ and $\Q_2(\iota_{P_k})$. We present key findings in sub-sections \ref{gtr} and \ref{kne}. First we give a brief review of the field.

\subsection{Brief review of related results}\label{review}


Needless to say, we are not the first group to address the shortcomings of traditional $\ell^1$-minimization by use of non-convex penalties. In fact, even before the birth of compressed sensing, the shortcomings of $\ell^1$-techniques were debated and non-convex alternatives were suggested, we refer to \cite{fan2001variable} for an overview of early publications on this issue. Moreover, shortly after publishing the celebrated result \eqref{crt}, Cand\'{e}s, Wakin and Boyd suggested an improvement called ``Reweighted $\ell^1$-minimization'' \cite{candes2008enhancing} which also became a big success. They provide a theoretical understanding of this algorithm as minimizing the non-convex functional $$f(x)=\sum_j \log(\epsilon+|x_j|)$$ where $\epsilon$ is a parameter chosen by the user. Figure \ref{S22card} shows the functions $\card(x),$ $|x|$ and $\log(0.1+|x|)-\log(0.1)$ as well as $\Q_2(\card)$. As is clear to see, $\log(0.1+|x|)-\log(0.1)$ is closer to $\card(x)$ than $|x|$, which may explain the better performance by reweighted $\ell^1$-minimization reported in \cite{candes2008enhancing,carlsson2020perfect}.
The functional $\Q_2(\card)$ is even closer to $\card(x)$, and while this certainly is one reason behind the superior theoretical results reported in this paper, there is still the issue of getting stuck in stationary points. In \cite{soubies-etal-siims-2015} the authors provide a macro algorithm to avoid non-local minima. In the same vein, Zhang \cite{zhang2010nearly} proposes to iteratively update relevant parameters to reach the desired global minima with higher probability.

Favorable results for $\Q_2(\mu \card)/MCP$ were reported in the recent paper \cite{loh2017support}, which compares the use of MCP with $\ell^1$ and reweighted $\ell^1$ (called LSP in \cite{loh2017support}) as well as SCAD (introduced in \cite{fan2001variable} which has similar performance as MCP). The numerical results in this paper seems also to reconfirm this, despite not employing any algorithm ensuring that we do not converge to an undesired stationary point. 

The first theoretical justification of using MCP/$\Q_2(\mu\card)$ is Corollary 1 of \cite{zhang2010nearly}, which roughly speaking contains an algorithm which finds the global minimum of MCP with high probability, and shows that the probability that this differs from the oracle solution is low. The result is based on very technical assumptions involving constants $c_*,~c^*,~d^*,~d^o,~\gamma,~\sigma,~w^o,~\beta_*$ and $\tilde p_1$, and so it seems hard to verify if this result applies in a concrete situation.

A more recent theoretical justification to support the use of MCP is given in \cite{loh2017support} which, under a number of assumptions, prove that \eqref{genprobS} with $\Q_2(\mu \card)$ does have the oracle solution as a unique stationary point with high probability, and provide an estimate of the type \eqref{crt}, see Corollary 1. However, as with the results of Zhang, this result relies on a number of constants whose values are difficult to estimate, so it is hard to know when exactly the theorem applies. {\color{red} In addition we note that in many practical cases the MCP formulation has local minima, see our experimental evaluation, indicating that the assumptions made to ensure uniqueness are very restrictive.}
We believe that the corresponding theory in the present paper is much more transparent, with conditions that are more general and comparatively easy to verify, as well as stronger conclusions. We postpone further discussion of this to Section \ref{what}.

The papers \cite{attouch2013convergence,blumensath2008iterative,blumensath2009iterative,nikolova2013description,nikolova2016relationship} considers \eqref{genprob} for the cases $f(x)=\card(x)$ as well as $f(x)=\iota_{P_k}(x)$, and \cite{attouch2013convergence} show in particular that the FBS-algorithm applied to \eqref{genprob} converges to a stationary point, but a further analysis of this point is not present. In fact, it seems to us that these papers fail to recognize that the oracle solution often is the global minimizer of both \eqref{q1} and \eqref{q2}, which follows from the results of this paper (see Corollaries \ref{cor:doctorgadget} and \ref{cor:doctorgadget2} respectively).

Many other non-convex penalties have been proposed over the years \cite{selesnick2017sparse,bredies2015minimization,chartrand2007exact,pan2015relaxed,zou2008one,fan2004nonconcave,wang2014optimal,loh2013regularized,fan2014strong,zhang2012general,
zhang2010nearly,loh2017support,candes2008enhancing,breheny2011coordinate,fan2001variable,mazumder2011sparsenet}, and we make no attempt to review them here. The introduction of \cite{loh2017support} contains a recent overview. A common denominator seems to be that the penalty function is separable, i.e.~has the form $p(x)=\sum_j p_j(x_j)$ where $p_j$ are functions on $\R$ (except the recent contribution \cite{selesnick2017sparse}). The penalty $\Q_2(\iota_{P_k})$ is not of this form. In fact, $\Q_2(\iota_{P_k})$ is the simplest of a vast field of possible penalties introduced in \cite{larsson-olsson-ijcv-2016} that can be more tailormade to the problem at hand, neither of which is separable.

\subsection{Sparse recovery via $\Q_2(\mu\card)$}\label{gtr}

We return to the first problem of minimizing \eqref{genprobS} for $f=\mu\card(x)$ i.e.
\begin{equation}\label{q1}\kfi(x):=\mu \card{(x)}+\|Ax-b\|_2^2\end{equation} where the parameter $\mu$ controls the tradeoff between sparsity and data-fit. Motivated by Section \ref{secQ} we propose to regularize $\kfi$ with \begin{equation}\label{q1reg}\kfir(x)=\Q_2(\mu\card){(x)}+\|Ax-b\|_2^2.\end{equation} The graph of $\Q_2(\card)$ is depicted in Figure \ref{S22card}.

\begin{wrapfigure}{r}{0.4\textwidth}
 	\begin{center}
 		\includegraphics[width=.4\textwidth]{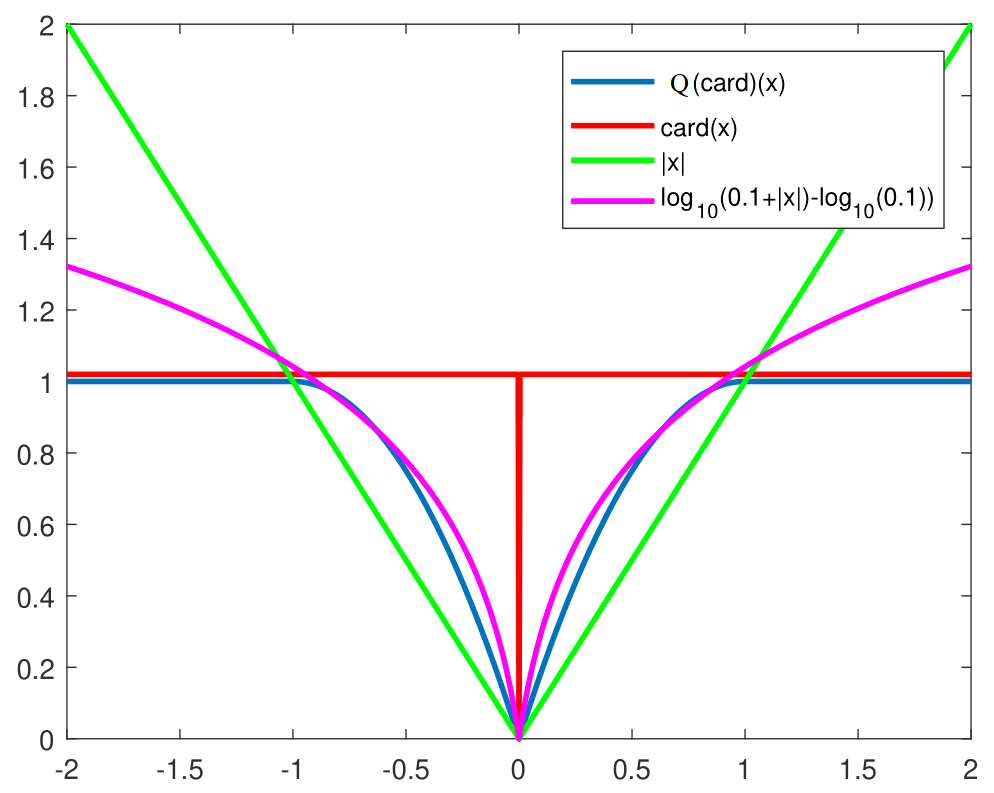}\\
 	\end{center}
 	\caption{Illustration of penalties.}
 	\label{S22card}
 \end{wrapfigure}
We will study uniqueness of sparse minimizers of both \eqref{q1} and \eqref{q1reg}, in the sense that we give concrete conditions such that if there exists one local minimizer $x'$ of \eqref{q1reg} with the property that $\card(x') \ll m$ (in a manner to be made precise), then
\begin{itemize}
\item $x'$ is automatically a global minimizer and also a solution to \eqref{q1}
\item any other stationary point $x''$ of \eqref{q1reg} satisfies $\card(x'') \gg \card(x')$.
\end{itemize}

To state our results, we remind the reader that $A$ satisfies a Restricted Isometry Property for integer $k$, if any $k$ columns of $A$ behaves approximately as an isometry, in the sense that \[ (1-\delta_k) \|x\|^2 _2 \le \| Ax\|_2 ^2 \le (1 + \delta_k) \|x\|^2 _2  \] for all \(k\)-sparse vectors \( x \in \mathbb{R}^n  \), \(k \in \mathbb{N}\), and some constant \( 0 \le \delta_k < 1   \). 
Classical results from compressed sensing literature usually require that the numbers $\delta_k$ are small, something which we have found is hard to fulfill in practice. For example, the famous estimate \eqref{crt} holds under the assumption that $\delta_{3k}+3\delta_{4k}<2$. This condition was later improved to the simpler estimate \begin{equation}\label{ki}\delta_{2k}<\sqrt{2}-1 \approx 0.4,
	\end{equation} (see \cite{candes2008restricted}) which is the estimate currently reproduced in textbooks on the subject, such as \cite{foucart2013invitation}. Our numerical evaluation (see Section \ref{sec:size}) shows that this condition is usually not satisfied for a Gaussian random matrix $A$ (with normalized columns) of size $100\times 200$ (a common size for many applications), except for $k=1$. The statement that RIP holds with overwhelming probability \cite{candes2005decoding} is therefore somewhat misleading.


We base the theory of this paper on the Lower Restricted Isometry Property (LRIP), basically constituting the lower estimate of the RIP (introduced in \cite{blanchard-cartis-tanner-2011}). More precisely, we define \begin{equation}\label{beta}1-\delta_k^-=\inf\left\{\frac{\|Ax\|_2^2}{\|x\|_2^2}:~ x\neq 0,~\card(x)\leq k\right\}\end{equation} for $k=1\ldots n$. We say that $A$ satisfies LRIP with respect to the property $P_k=\{x:\card(x)\leq k\}$ if $\delta_k^-<1$. In other words $A$ is LRIP with respect to this property if and only if any $k$ chosen columns of $A$ are linearly independent. Clearly $\delta_k^-\leq \delta_k$ and for Gaussian matrices inequality typically holds, which is further discussed in Section \ref{sec:size}.

To give the reader an early insight into key findings, we state a simplified version of the main result of Section \ref{seccard}, Theorem \ref{thm:doctorgadget} (for the particular case $N=2k$).

\begin{corollary}\label{cor:doctorgadget}
Suppose that $A$ has columns in the unit ball of $\R^n$ or $\C^n$, that $b=Ax_0+\ep$ and set $\card (x_0)=k.$ Assume that
 the noise is small enough that the open interval $$\left(\frac{\|\ep\|_2}{1-\delta_{2k}^-},\frac{(1-\delta_{2k}^-)\min_{j\in\supp x_0}|x_{0,j}|}{2}\right)$$ is non-empty. Then for any $\mu$ with $\sqrt{\mu}$ in the above interval, we have that\begin{itemize}\item[$a)$]  Then there exists a unique global minimum $x'$ to $\kfir$ as well as $\kfi$, and it is the oracle solution.\item[$b)$] We have that $\supp x'=\supp x_0$ \item[$c)$]$$\|x'-x_0\|_2\leq \frac{\|\ep\|_2}{\sqrt{1-\delta_k^-}},$$ \item[$d)$] $\card(x'')> k$ for any other stationary point $x''$ of $\kfir$.\end{itemize}
\end{corollary}
Moreover, if the above estimates hold for some $N\gg 2k$ we can state that $x''$ has cardinality higher than $N-k$. In other words, either the algorithm finds the oracle solution or one with substantially higher cardinality. Although the theorem gives conditions on how to pick $\mu$, the involved quantities are generally not exactly known. For some matrix families good estimates exist e.g. \cite{blanchard-cartis-tanner-2011}. For other problems one has to proceed by trial and error (as with all to us known CS-methods). However, in our experience, the method is very robust and finds the oracle solution for a range of $\mu$-values, as opposed to e.g.~traditional $\ell^1$-minimization \eqref{l1probdual} which gives a different solution for each $\lambda$.

Note that the conditions on ``noise'' $\ep$ and ``ground truth'' $x_0$ are very natural; if the noise is too large or if the non-zero entries of $x_0$ are too small, there is no hope of correctly retrieving the support. Also note the absence of a condition forcing $\delta_{2k}^-$ to be ``small'', in sharp contrast to other results in the field such as \eqref{ki} or $\delta_{3k}<1/\sqrt{32}$ in \cite{blumensath2008iterative} (conditions that are very hard to satisfy, see Section \ref{sec:size}). On the contrary, as long as $\delta_{2k}^-<1$, Corollary \ref{cor:doctorgadget} holds, and in order for it to apply for some $\mu$ one needs that the signal to noise ratio, measured as \begin{equation}\label{snr}SNR=\frac{\min_{j\in\supp x_0}|x_{0,j}|}{\|\ep\|_2},\end{equation} has to be sufficiently large, (more precisely larger than $\frac{2}{{(1-\delta_{2k}^-)}^2}$, for then the interval in the corollary is non-void).

\subsection{Sparse recovery via $\Q_2(\iota_{P_k})$.}\label{kne}

We now discuss the situation when the model order, i.e. the amount $k$ of non-zero entries, is known. This problem is also known as the $k$-sparse problem and studied e.g.~in \cite{blumensath2008iterative}. For simplicity we restrict attention to $\R^n$, corresponding results for $\C^n$ are similar but the assumptions on $A$ are slightly more technical (see Section \ref{Kfeas}). As pointed out earlier the NP-hard problem \eqref{agt1intro} can be written
\begin{equation}\label{q2}\K_k(x)=\iota_{P_k}(x)+\|Ax-b\|_2^2\end{equation} (where the subindex $k$ separates the notation from \eqref{q1}) which we regularize with \begin{equation}\label{q2reg}\K_{k,reg}(x)=\Q_2(\iota_{P_k})(x)+\|Ax-b\|_2^2.\end{equation}
{\color{red}Figure~\ref{fig:iota} shows $\Q_2(\iota_{P_1})$ as a function of two variables (in the positive quadrant). The penalty assigned is zero for all vectors with no more than one non-zero variable.
For comparison we also plot $\Q_2(\card)$. Note that $\Q_2(\card)$ is constant in the region where both variables are larger than $\sqrt{\mu}$. This shape makes it likely that $\K_{reg}$ has local minimizers of high rank. In contrast $\Q_2(\iota_{P_1})$ has large gradients in this area which as we shall se makes it possible to exclude such stationary points for $K_{k,reg}$.

\begin{figure}[hbt]
	\begin{center}
		\includegraphics[width=.4\textwidth]{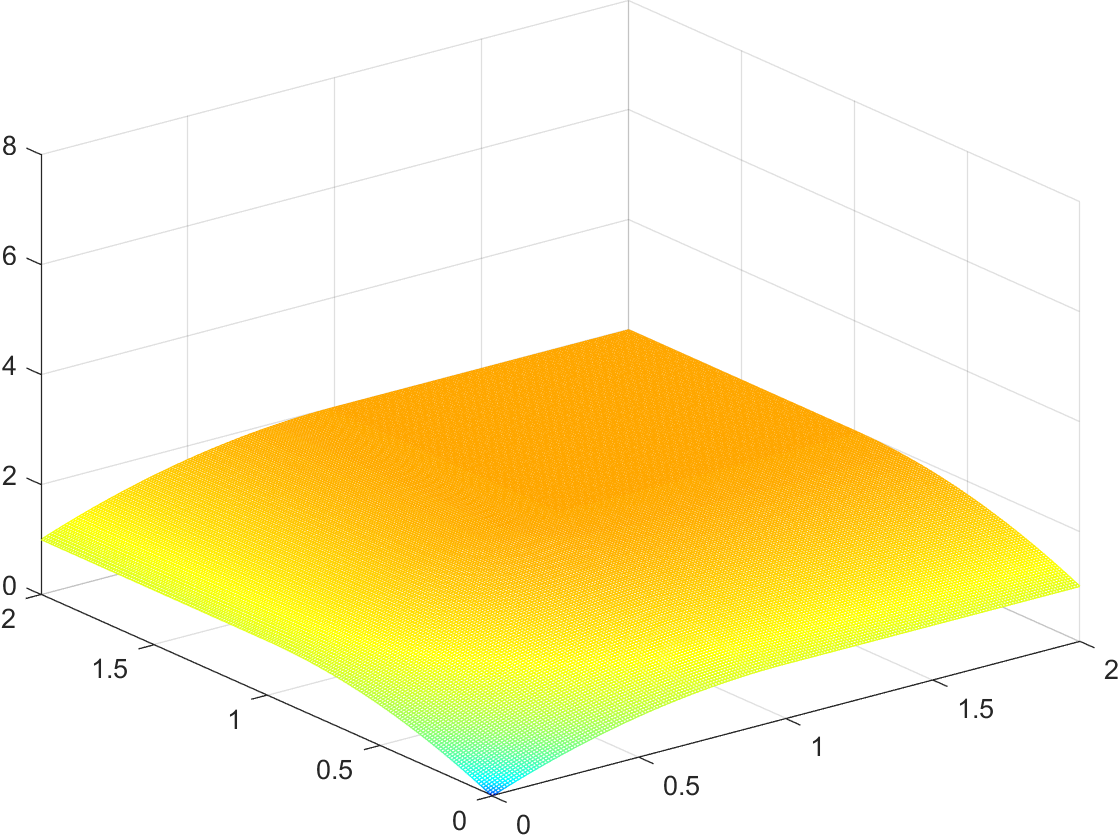}
		\includegraphics[width=.4\textwidth]{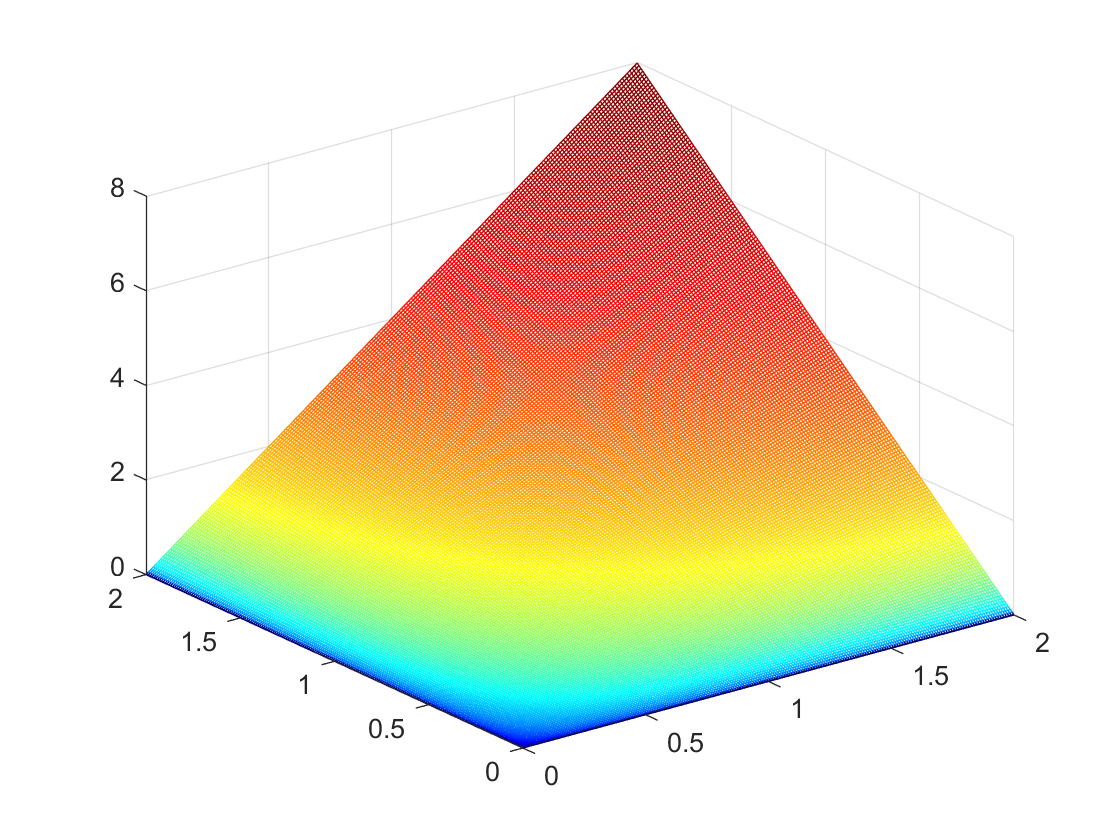}		
	\end{center}
\caption{Two dimensional illustrations of the functions $\Q_2(card)$ (left) and $\Q_2(\iota_{P_1})$ (right).}
\label{fig:iota}
\end{figure}
}
We first present a result where $b$ is not necessarily given by $Ax_0+\epsilon$.
\begin{corollary}\label{cytwsgdf}
Let $A$ have columns in the unit ball such that no pair is orthogonal, and assume that $n\geq m+k+2$. Any local minimizer $x'$ of $\ksir$ then satisfies $\card(x')\leq k$. Moreover, set $z'=(I-A^* A)x' + A^* b$, let $\tilde z'$ contain the elements of $z'$ sorted by decreasing magnitude,
 and assume that
\begin{equation}\label{n}|\tilde z'_{k+1}|<(1-2\delta_{2k}^-)|\tilde z'_{k}|.\end{equation}
Then $x'$ is the unique global minimum of $\ksi$ and $\ksir$.
\end{corollary}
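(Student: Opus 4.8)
The plan is to establish the statement as the combination of a \emph{structural} result and an \emph{optimality} result, mirroring the way Corollary~\ref{cor:globalpoint2} is assembled from Theorems~\ref{thm:statpoint:vec} and~\ref{thm:globalpoint2}; both ingredients will be proved as theorems in Section~\ref{seccardfix}. First I would show that every local minimizer $x'$ of $\ksir$ is $K$-sparse. Suppose not, say $\card(x')=K+j$ with $j\ge1$. Since $\Q_2(\iota_{P_K})$ vanishes on $P_K$, is strictly positive off it, and at a point with more than $K$ nonzeros has an explicit bounded subdifferential, I would exhibit a direction $v$ along which the first-order increase of $\|Ax-b\|^2$ is strictly dominated by the decrease of $\Q_2(\iota_{P_K})$. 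The hypothesis $n\ge m+K+2$ is what makes room for such a $v$: there are more than $m$ coordinates outside $\supp x'$ available beyond the $K$ already occupied, so not all of them can be pinned by the range of $A^*$; and the no-orthogonal-pair assumption removes the single obstruction, in which $x\mapsto Ax$ acts isometrically on some coordinate $2$-plane and a genuinely non-sparse local minimum persists. This forces $j=0$, i.e.\ $x'\in P_K$.

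Next I would analyze a stationary point $x'\in P_K$ of $\ksir$. Stationarity means $-2A^*(Ax'-b)$ lies in the subdifferential of $\Q_2(\iota_{P_K})$ at $x'$, equivalently that $x'$ minimizes the \emph{convex} function $\Q_2(\iota_{P_K})(x)+\|x-z'\|^2$ with $z'=(I-A^*A)x'+A^*b=x'-A^*(Ax'-b)$. Reading this minimality off coordinatewise: on $\supp x'$ it gives $A_{\supp x'}^*(A_{\supp x'}x'-b)=0$, so $z'_i=x'_i$ there and $x'$ is the least-squares solution on its own support; for $i\notin\supp x'$ the relevant slice of $\partial\Q_2(\iota_{P_K})(x')$ is the interval of radius $2\min_{k\in\supp x'}|x'_k|$, so $|z'_i|\le\min_{k\in\supp x'}|x'_k|$. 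Hence $\supp x'$ is exactly the index set of the $K$ largest entries of $|z'|$, $|\tilde z'_K|=\min_{k\in\supp x'}|x'_k|$ is the smallest nonzero coefficient of $x'$, and $|\tilde z'_{K+1}|=\max_{i\notin\supp x'}|(A^*(b-Ax'))_i|$ is the largest off-support residual correlation. In particular \eqref{n} can hold only if $\card(x')=K$ (and only if $2\beta_{2K}^2-1>0$): a stationary point with $\card(x')<K$ satisfies $A^*(Ax'-b)=0$, whence $\tilde z'_K=\tilde z'_{K+1}=0$, contradicting \eqref{n}.

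With this in hand I would prove that, under \eqref{n}, $\supp x'$ is an optimal $K$-element support for $\min_{\card x\le K}\|Ax-b\|^2$ and that every other support is strictly worse. For an arbitrary $K$-subset $T$, let $x_T=A_T^+b$; since $\beta_K>0$ every $A_S$ with $|S|\le K$ has full column rank, so $x_T\neq x'$ whenever $T\neq\supp x'$ and $w:=x'-x_T$ is supported on at most $2K$ indices. Expanding $\|Ax_T-b\|^2-\|Ax'-b\|^2$ and using $A^*(Ax'-b)=x'-z'$ (which vanishes on $\supp x'$ and equals $-z'$ off it) shows this difference equals $\|Aw\|^2+2\sum_{i\in T\setminus\supp x'}z'_iw_i$, so the claim becomes $\|Aw\|^2\ge-2\sum_{i\in T\setminus\supp x'}z'_iw_i$. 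One bounds the right side by $2|\tilde z'_{K+1}|$ times the $\ell^1$-mass of $w$ off $\supp x'$, bounds $\|Aw\|^2$ below by $\beta_{2K}^2\|w\|^2$, and uses that $w$ carries at least one full on-support coefficient of size $\ge|\tilde z'_K|$. Balancing the residual increase caused by discarding an on-support column (bounded below through $\beta_{2K}$) against the residual decrease a newly activated column can produce (bounded above through $|\tilde z'_{K+1}|$ and $\beta_{2K}$) makes the net change nonnegative precisely under \eqref{n}, and strictly positive unless $w=0$. Therefore $x'$ is the unique global minimizer of $\ksi$; and since $\Q_2(\iota_{P_K})\le\iota_{P_K}$ with equality on $P_K$, while $\ksi$ and $\ksir$ share their global minimizers by the quadratic-envelope property recalled in Section~\ref{sec:intro}, $x'$ is the unique global minimizer of $\ksir$ as well.

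I expect the third step to be the main obstacle: making the discard-versus-activate bookkeeping close with the \emph{dimension-free} constant $2\beta_{2K}^2-1$, rather than with a bound polluted by a spurious $\sqrt K$ coming from an $\ell^1$--$\ell^2$ estimate or from separately over- and under-estimating the projections onto the competing column spans. A secondary delicate point is the descent construction of the first step, which must run under the weak hypotheses $n\ge m+K+2$ and the no-orthogonal-pair condition rather than an RIP-type smallness assumption.
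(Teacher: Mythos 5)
Your overall architecture (a structural step forcing local minimizers into $P_K$, then an optimality step under \eqref{n}) matches the paper's, and your second step is essentially the paper's Lemma~\ref{sun}. But there are two genuine gaps. First, your structural step cannot work as described: at a local minimizer of $\ksir$ there is no direction along which a first-order decrease of $\Q_2(\iota_{P_K})$ dominates a first-order increase of $\|Ax-b\|^2$, because stationarity makes the first-order contributions cancel. The paper's mechanism (Proposition~\ref{mfisR} together with Theorem~\ref{celok6}) is second order: since $n-K\geq m+2$, a pigeonhole argument on $m+2$ vectors in $\R^m$ produces two columns outside the support with $\scal{a_i,a_j}\geq 0$, hence $\|a_i-a_j\|^2\leq 2$, strict by the no-orthogonal-pair hypothesis; along $x+t(e_i-e_j)$ one then gets $\frac{d^2}{dt^2}\ksir(x(t))=-4+2\|a_i-a_j\|^2<0$ (after a delicate check that the index $k_*$ in the formula for $\Q_2(\iota_{P_K})$ is stable under the perturbation), so no local minimum can sit off $P_K$. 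You correctly sense where the hypotheses enter, but the order of the argument is wrong.

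Second, your closing transfer from $\ksi$ to $\ksir$ invokes the general coincidence of global minimizers ``recalled in Section 1'', but that result requires $\|A\|<1$, which fails here: $A$ has $n>m$ normalized columns, so $\|A\|^2\geq n/m>1$. The whole point of $K$-feasibility and Theorem~\ref{celok6} is to replace that hypothesis; in particular one must also prove that $\ksir$ attains its global minimum at all (neither term is coercive), which the paper handles via a compactness argument on the set $U$ and which is the most laborious part of the proof. Concerning your third step: the paper does not enumerate supports $T$, but instead compares two stationary points $x',x''\in P_K$ directly, pairing each index of $\supp x'\setminus\supp x''$ with one of $\supp x''\setminus\supp x'$ and combining Lemma~\ref{sun}, \eqref{n} and $2ab\leq a^2+b^2$ to contradict Proposition~\ref{thm:statpoint}; this pairing is exactly what produces the dimension-free constant $2\beta_{2K}^2-1$. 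Your support-comparison variant can in fact be closed without a $\sqrt{K}$ loss by applying the same per-index AM--GM to the terms $2|\tilde z'_{K+1}||w_i|$, but as written you leave precisely this point open, and even once closed it yields only global minimality for $\ksi$, not for $\ksir$.
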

A similar result also holds in the situation of the previous section. The interesting point to note is that there is a simple verifiable condition on whether a solution to  \eqref{agt1intro} has been found, given that some estimate of ${\delta_{2k}^-}$ is available (see e.g. Theorem 9.26 of \cite{foucart2013invitation} or \cite{blanchard-cartis-tanner-2011}).

 Corollary \ref{cytwsgdf} is a combination of Theorem \ref{celok6} and \ref{thm:globalpoint2f}.  
We now consider the case when $b=Ax_0+\ep$ and we wish to retrieve $x_0$, where $\card(x_0)=k$. By Theorem \ref{cor:dogadgetf}, we have (for $A$ as in the previous corollary);

\begin{corollary}\label{cor:doctorgadget2}
 Assume the SNR (as measured in \eqref{snr}) is greater than $\frac{3}{\sqrt{1-\delta_{2k}^-}}$. Then the oracle solution is a unique global minimizer $x'$ to $\ksir$ with $\supp (x')=\supp (x_0)$ and moreover it satisfies $\|Ax'-b\|_2\leq \|\epsilon\|_2$ and $$\|x'-x_0\|_2\leq \frac{\|\ep\|_2}{\sqrt{1-\delta_k^-}}.$$
\end{corollary}

Finally, if the SNR is also greater than $$\para{\frac{1}{1-2\delta_{2k}^-}+\frac{1}{\sqrt{1-\delta_k^-}}},$$ Corollary \ref{new} says that $\ksir$ has no local minimizers (except for the oracle solution). An interesting point to note is that minimizing $\ksi$ can be seen as finding one among $\binom{n}{k}$ possible minimizers (see the proof of Theorem \ref{cor:dogadgetf}). 
However, $\binom{n}{k}$ is typically a large number, for example if $k=10$ and $n=1000$ it is around $2\cdot 10^{23}$. The above corollary states that all but one of these, the relevant one, disappears when regularizing with $\Q_2(\iota_{P_k})$, which is rather amazing, in the authors humble opinion. However,
this clearly demands that $\delta_{2k}^-<0.5$, to be compared with the state of the art assumption $\delta_{2k}<\sqrt{2}-1\approx 0.4$ for when standard compressed sensing results kick in \cite{foucart2013invitation}.
In which situations is it likely to assume that either of these hold? We try to shed some light on this in the next section.

\subsection{On the size of RIP/LRIP-constants}\label{sec:size}

RIP-values are notoriously difficult to estimate, which makes it hard to compare theorems in compressed sensing. For example, the currently best known estimate for \eqref{crt} was proven in \cite{candes2008restricted}, and is reproduced in textbooks such as \cite{foucart2013invitation}. It says that $C_k=8.5$ if $\delta_{2k}=0.2$, but how likely is that to happen? In \cite{foucart2013invitation} very intricate estimates in this direction are give  in Theorem 9.27, which claims that the \(2k\)-RIP constant \( \delta_{2k}\)  of a random Gaussian matrix \( A/ \sqrt{m} \) is\footnote{Constructing the matrix like this gives expected value of the column norms equal to 1, so is very similar to normalizing the columns, as done in the examples of this paper}  \[ \le 2 \left(  1 + \frac{1}{\sqrt{2 \ln(e \cdot n /2k )}} \right) \eta +  \left(  1 + \frac{1}{\sqrt{2 \ln(e \cdot n /2k )}} \right)^2 \eta^2   \] with probability \( 1- \epsilon  \) if \[ m \ge 2 \eta^{-2} (2k \ln(e \cdot n/2k) + \ln(2\epsilon^{-1})    ).    \] Now let's suppose we are interested in a very sparse signal, \( k=10  \), and \( n=1000  \). Then \[ \left(  1 + \frac{1}{\sqrt{2 \ln(e \cdot 1000 /20 )}} \right) \approx 1.32  \] and $\left(  1 + \frac{1}{\sqrt{2 \ln(e \cdot 1000 /20 )}} \right)^2 \approx 1.74$. The equation $1.74 \eta^2 + 2 \cdot 1.32 \eta = c $ gives the positive solution \[ \eta (c) \approx (\sqrt{1.74 c + 1.32^2} - 1.32)/1.74.  \] For \( c= 0.2 \), \( \eta \approx 0.072 \). Therefore we would need \( m \ge 37878 \), independently on the probability degree \( \epsilon \); this is absurd since we would like \( m \ll n =1000 \).

Of course, there is the possibility that the estimates for $\delta_{2k}$ are poor and that the reality is different. To test this we computed values of $\delta_{j}$ and $\delta_{j^-}$ for matrices of various size. {\color{red}The test matrices where generated by first drawing elements from i.i.d Gaussian distributions and then normalizing the resulting columns. Note that this gives a matrix with columns drawn from a uniform distribution on the sphere.
The results presented below where averaged over 5 trials.}
\begin{table}[h!]
\begin{center}
 \caption{$m=25,~n=50$}
\begin{tabular}{|c|c|c|c|c|c|}
   \hline
   j: & 2 & 3 & 4 & 5 & 6\\ \hline
   $\delta_j:$ & 0.66 & 1.04 & 1.36 & 1.65 & 1.88 \\
   $\delta_j^-:$ & 0.66 & 0.79 & 0.87 & 0.92 & 0.95 \\
   \hline
 \end{tabular}
\end{center}
\end{table}

From table 1 we can note several interesting things. For example, $\delta_j^-$ is usually a bit smaller than $\delta_{j}$, and whereas the latter can become larger than 1 the former can not, by definition. In fact, by the definition it is easy to see that $\delta_j^-=1$ if and only if there are $j$ linearly dependent columns in the matrix. This means that with probability 1, we always have $\delta_j^-<1$ for $j\leq m$ whereas $\delta_j^-=1$ for all $j>m$. In particular, Corollary \ref{cor:doctorgadget} is applicable with probability 1 whenever $k\leq m/2$.

The second thing to note is that we do not present very many values, which is related to the computational time. If we were interested in computing $\delta_{20}$ for a matrix with $n=1000$, as discussed initially, we would need to perform $\binom{1000}{20}\approx 4\cdot10^{41}$ SVD's. In fact, even computing $\delta_{7}$ for $n=50$ requires around $10^9$ SVD's, (which is not impossible but we skipped it since the numbers are very poor anyway). For this reason, the typical sizes of $\delta_j$'s remain a mystery, which likely is a reason behind the widespread belief that these numbers often are decent. To shed some light for larger matrices, we now compute for $j$ up to 4 and $m=100$ as well as $250$ (with $n=2m$).

\begin{table}[h!]
\parbox{0.45\linewidth}{\begin{center}
 \caption{$m=100,~n=200$}
\begin{tabular}{|c|c|c|c|}
   \hline
   j: & 2 & 3 & 4\\ \hline
   $\delta_j:$ & 0.39 & 0.61 & 0.80 \\
   $\delta_j^-:$ & 0.39 & 0.52 & 0.62 \\
   \hline
 \end{tabular}
\end{center}}
\parbox{0.45\linewidth}{\begin{center}
 \caption{$m=250,~n=500$}
\begin{tabular}{|c|c|c|c|}
   \hline
   j: & 2 & 3 & 4\\ \hline
   $\delta_j:$ & 0.28 & 0.43 & 0.55 \\
   $\delta_j^-:$ & 0.28 & 0.38 & 0.44 \\
   \hline
 \end{tabular}
\end{center}}

\end{table}

The most striking thing to note is that the numbers are still terribly poor, even for $m=250$. It certainly came as a surprise to the authors that none of the classical results on compressed sensing applies in the $250\times 500$ setting, unless $k=1$ and in this case the constant $C_k$ is approximately $14$ (based on our five trials average). Here a strength of the results of this paper becomes apparent, because even for an extremely poor value like $\delta_k^-=0.95$ we have that the constant in the error estimate $\|x'-x_0\|_2\leq \frac{1}{\sqrt{1-\delta_k^-}}\|\ep\|_2$ equals $4.5$, almost half the value you get for $C_k$ when $\delta_{2k}=0.2$ in \cite{candes2008restricted}, as reported initially.


In fact, despite the difficulty in estimating the constants, it is not impossible to compare the quality of estimates. If we set $f_{C}(x)=\frac{4 \sqrt{1+x}}{1-(1+\sqrt{2})x}$ and $f_{CGO}(x)=\frac{1}{\sqrt{1-x}}$ then the constant $C_k$ in \eqref{crt}, as defined in \cite{candes2008restricted}, is given by $f_C(\delta_{2k})$ whereas the corresponding constant in Corollary \ref{cor:doctorgadget} and \ref{cor:doctorgadget2} is given by $f_{CGO}(\delta_k^-)$. The functions $f_C$ and $f_{CGO}$ are displayed in Fig \ref{RIPvsLRIP}. \begin{wrapfigure}{r}{0.5\textwidth}
	\begin{center}
		\includegraphics[width=.5\textwidth]{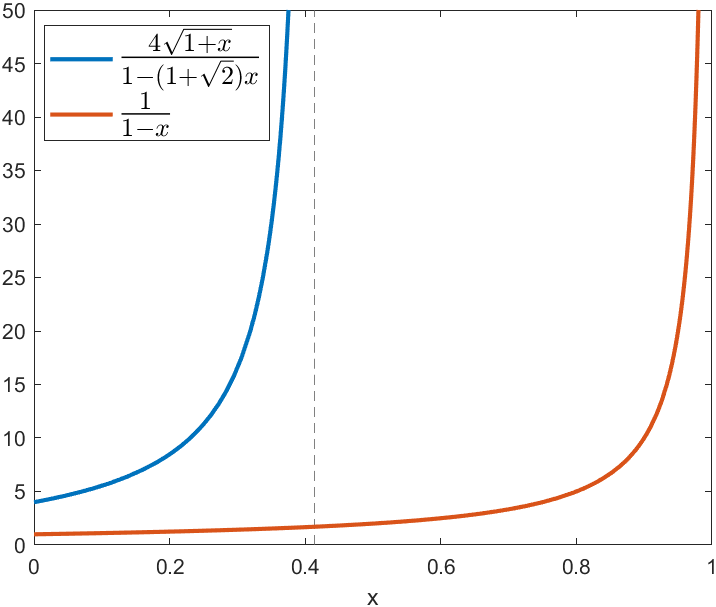}\\
	\end{center}
	\caption{$f_{CGO}$ in red and $f_C$ in blue.}
	\label{RIPvsLRIP}
\end{wrapfigure} Clearly the latter constant is vastly better by just comparing these graphs, and this conclusion is further strengthened by noting that $\delta_k^-\leq \delta_{k}\leq \delta_{2k}$.

A fourth thing to note from the tables is that the $\delta_k$'s do decrease with $m$, as predicted by the theory, and once we hit $m=250$ all reported numbers are below 0.5, the requirement for Corollary \ref{cor:doctorgadget2} to kick in. However, note that once Corollary \ref{cor:doctorgadget2} applies the error estimate immediately gets a very favorable constant, since $1/\sqrt{1-0.5}=\sqrt{2}\approx 1.4$. On the other hand, the $\ell^1$-results by \cite{candes2008restricted} still only applies for $k=1$ and then the constant $C_k$ equals 13.96.

How much better does it then get in the asymptotic regime? The best estimates of this we have found is in \cite{blanchard-cartis-tanner-2011}, which gives advanced probabilistic estimates as well as extensive numerical evaluations using sophisticated methods to estimate RIP/LRIP-values. In particular Figure 2.3 and 2.4 are enlightening, where it is shown that for $\frac{m}{n}=0.5$, one needs to have $k$ well below $1\%$ of $m$ to have any hope of achieving $\delta_{2k} = 0.4$, which is what is required in \eqref{ki}. More precisely, following \cite{blanchard-cartis-tanner-2011} we need $\mathcal{L}(0.5,\frac{2k}{m})$ and $\mathcal{U}(0.5,\frac{2k}{m})$\footnote{\(\mathcal{L}\) and \( \mathcal{U} \) are the asymptotic RIP bounds. Informally speaking, and here we quote \cite{blanchard-cartis-tanner-2011} verbatim, "For large matrices from the Gaussian ensamble, it is overwhelmingly unlikely that the RIP asymmetric constants \( L(k,m,n) \) and \( U(k,m,n) \) will be greater than \(\mathcal{L} (\delta,\rho) \) and \(\mathcal{U} (\delta,\rho) \)". \( \delta \) and \( \rho \) are such that \( n/N \to \delta  \) and \( k/n \to \rho \) as \( n \to \infty  \). \( L(k,m,n) \) is what we called \( \delta_k^{-} \) for an \( m \times n \) matrix and \( U(k,m,n) \) is its natural upper-counterpart.} to be below 0.4, which happens around $k/m\approx 1.5\cdot10^{-3}$. It is also clear that RIP-values are consistently higher with a notable difference. Based on this, it seems safe to conclude that for a large amount of settings where $\ell^1$-methods are used, there is very limited theoretical evidence for their applicability, at best.

%


\subsection{What's in a theorem?}\label{what}

The so called ``oracle solution'', i.e. the one you would get if an oracle told you the true support $S$
 of $x_0$ and you were to solve the (overdetermined) equations system $A_Sx=b$ where $A_S$ denotes the $m\times k$ matrix whose columns are those with indices in $S$ (and then expand $x$ to $\R^n$ by inserting zeroes off $S$). This is clearly the best
possible solution one could hope for (as argued also in \cite{candes2006stable}).

Corollaries \ref{cor:doctorgadget} and \ref{cor:doctorgadget2} claim that the oracle solution is a unique global minimizer of the respective functional, not necessarily that a given algorithm will find this global minimizer. In our experience, working either with FBS or ADMM, the algorithms do find the oracle solution in very difficult scenarios when one initializes at zero\footnote{Initializing $\K_{reg}$ at the least squares solution is not good. As evidenced by our numerical evaluation in Section~\ref{sec:num} there seems to be many local minima nearby.}, but we do not have a proof for this. We can prove that FBS converges to a stationary point and that the stationary points in Corollary \ref{cor:doctorgadget2} are not local minima, except for the oracle solution, see Section \ref{sec:num}.

What is the value of these observations and how do they compare with the existing literature? For example, \cite{blumensath2009iterative} studies the minimization of \eqref{q2} itself (which, if we apply FBS, leads to Iterative Hard Thresholding for $k$-sparsity, denoted IHT$_k$), and it actually guarantees that IHT$_k$ converges to within $5\|\ep\|$ of the oracle solution. They do not prove that they've found the oracle solution, but combined with Corollary \ref{cor:doctorgadget2} it follows that this is indeed the case (for SNR's such that the Corollary applies).
On first sight this is a much stronger conclusion, since they actually prove that their algorithm avoids unwanted stationary points. However, the method performs much worse in practice, see Figure \ref{f4}. The difference lies in the fact that \cite{blumensath2009iterative} assumes that $\delta_{3k}<\frac{1}{\sqrt{32}}\approx 0.18$, whereas Corollary \ref{cor:doctorgadget2} applies as long as $\delta_{2k}^-<0.5$, which is much more easy to fulfill in practice.

The strength of a result not only in the conclusion, but in how much one needs to assume. For example, there are many papers giving conditions under which minimization of \eqref{l1probdual} or non-convex alternatives find the true support.
If we have a method that would find a vector $x'$ with the correct support $S$ (with a bias or not), we can always get this unbiased solution by simply discarding $x'$ and follow the above procedure to get the oracle solution. Therefore the issue of finding the support is maybe more central than having a good estimate of $\|x'-x_0\|_2$. Conditions under which LASSO finds the correct support are given e.g.~in \cite{wainwright2009sharp} and for a more general class of non-convex penalties in \cite{loh2017support}. In both cases however, the theorems involve constants whose size is unknown, and their applicability can not be verified in a concrete problem instance. To be more concrete, the latter paper does have a result claiming that MCP finds the oracle solution with given probability, but apart from involving conditions that are very difficult to verify, the conclusion contains the statement that MCP has a unique stationary point. This is rarely true (see e.g.~\cite{soubies-etal-siims-2015} or Figure \ref{f5} below) and hence it follows that these conditions are substantially more strict than ours.
Concrete theorems proving that LASSO finds the true support, such as those found in \cite{donoho2005stable} are rather weak, see \cite{carlsson2020perfect} which studies this topic in depth.

We therefore believe that our framework is more generally applicable even for the already well-studied MCP penalty, and that the relative simplicity and verifiability of the assumptions makes the theory attractive, in particular combined with superior performance numerically, at least in the standard synthetic setting (see Section \ref{sec:num}).
Moreover, the penalty $\Q_2(\iota_{P_k})$ is not separable, and the underlying ideas of this work are based on the Quadratic Envelope as a regularizer and extends to a whole class of more advanced sparsifying penalties, of which $\Q_2(\iota_{P_k})$ is merely one. We leave further extensions for future work, but remark that the whole machinery developed here can also be lifted to low rank matrix problems, see \cite{carlsson2019biased}.


\section{Uniqueness of sparse stationary points}\label{sec:unique}
We now turn to the heart of the matter, namely uniqueness of sparse minimizers of $\K_{reg}$ as defined in \eqref{genprobS}, where $f$ can be any function with values in $\R\cup\{\infty\}$.
We say that $x$ is a stationary point of a given function $g$ if \begin{equation}\label{defstationary}\underset{y\neq 0}{\liminf_{y\rightarrow x}}~ \frac{g(x+y)-g(x)}{\|y\|}\geq 0.\end{equation}
  If $g$ is a sum of a convex function $g_c$ and a differentiable function $g_d$ and we work in $\R^n$, it is not hard to see that $x$ is a stationary point if and only if $-\nabla g_d(x)\in \partial g_c(x)$ where $\partial g_c$ denotes the usual subdifferential used in convex analysis, and $\nabla g_d$ the standard gradient. The same is true in the complex case, i.e.~when working over $\C^n$, upon suitable modification of the concept of subdifferential and gradient. For convenience we provide the details in Appendix~\ref{app:prel}.

Set \begin{equation}\label{gdef}\G(x)=\frac{1}{2}\Q_2(f)(x)+\frac{1}{2}\fro{x},\end{equation} i.e. $2\G$ the l.s.c. convex envelope of $f(x)+\fro{x}$.
We have
\begin{equation}\label{fr3}\K_{reg}(x)=2\G(x)-\fro{x}+\fro{Ax-b}\end{equation}
which upon differentiation yields that $x'$ is a stationary point of $\K_{reg}$ if and only if
\begin{equation}\label{sp}(I-A^* A)x' + A^* b\in\partial \G(x').\end{equation}
since $\nabla\left(\fro{Ax-b}-\fro{x}\right)=2A^*(Ax-b)-2x$, see the appendix for details. Given any $x$, we therefore associate with it a new point $z$ via
\begin{equation}\label{y}
z=(I-A^* A)x + A^* b.
\end{equation}
This point will play a key role in this paper, in fact, it has already appeared in Corollary \ref{cytwsgdf}. Suppose now that $x'$ and $x''$ are two \textit{sparse} stationary points in the sense that $x''-x'\in P_N$ for some $N$ less than $m$.

\begin{proposition}\label{thm:statpoint}
Let $x'$ and $x''$ be distinct stationary points of $\K_{reg}$ such that $x''-x'\in P_N$. Then
\begin{equation}\label{ineq}\re\scal{z''-z',x''-x'}\leq \delta_N^-\fro{x''-x'}.\end{equation}
\end{proposition}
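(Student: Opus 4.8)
The plan is to start from the two stationarity conditions. Since $x'$ and $x''$ are stationary points of $\K_{reg}$, by \eqref{sp} together with the definition \eqref{y} we have $z'\in\partial\G(x')$ and $z''\in\partial\G(x'')$. The key structural fact is that $\G$, defined in \eqref{gdef}, is convex — indeed $2\G$ is the l.s.c. convex envelope of $f(x)+\fro{x}$ — so its subdifferential is a monotone operator. Monotonicity gives $\re\scal{z''-z',x''-x'}\geq 0$, but that is the wrong direction; what we actually want is an \emph{upper} bound on this inner product. So the real input must be a quantitative strengthening: $2\G(x)-\fro{x}=\Q_2(f)(x)$, and although $\Q_2(f)$ need not be convex, the map $x\mapsto 2\G(x)$ being convex means $x\mapsto \G(x)$ is convex with, informally, ``curvature at least $\tfrac12$'' built in only through the $\fro{x}$ term subtracted off. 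The cleaner route is to exploit that $\G$ is convex while $\tfrac12\fro{\cdot}$ is exactly $1$-strongly convex, so $\G(x)-\tfrac12\fro{x}=\tfrac12\Q_2(f)(x)$ is what carries the ``concavity budget''.

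Concretely, here is the computation I would carry out. Rewrite \eqref{sp} as $(I-A^*A)x'+A^*b=z'\in\partial\G(x')$ and similarly for $x''$, and subtract. Monotonicity of $\partial\G$ applied to the pair $(x',z')$, $(x'',z'')$ yields
\[
\re\scal{z''-z',\,x''-x'}\geq 0.
\]
Now substitute $z''-z'=(I-A^*A)(x''-x')$ using \eqref{y}: the $A^*b$ terms cancel, so
\[
z''-z'=(x''-x')-A^*A(x''-x').
\]
Therefore
\[
\re\scal{z''-z',x''-x'}=\fro{x''-x'}-\re\scal{A^*A(x''-x'),x''-x'}=\fro{x''-x'}-\fro{A(x''-x')}.
\]
This identity is the crux and requires no monotonicity at all — it is pure algebra from \eqref{y}. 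Finally, since $x''-x'\in P_K$, the definition \eqref{beta} of $\beta_K$ gives $\fro{A(x''-x')}\geq \beta_K^2\fro{x''-x'}$, hence
\[
\re\scal{z''-z',x''-x'}=\fro{x''-x'}-\fro{A(x''-x')}\leq \fro{x''-x'}-\beta_K^2\fro{x''-x'}=(1-\beta_K^2)\fro{x''-x'},
\]
which is exactly \eqref{ineq}.

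So in fact the statement follows from the single identity $z''-z'=(x''-x')-A^*A(x''-x')$ combined with the RLIP lower bound; the stationarity hypothesis and the convexity of $\G$ are not even needed for this particular inequality (they will be used elsewhere, e.g.\ to get matching lower bounds). The only mild subtlety — the ``obstacle'', such as it is — is bookkeeping with the real-part notation in the complex case and making sure the cross term $\re\scal{A^*A(x''-x'),x''-x'}=\fro{A(x''-x')}$ is handled correctly (it is real and nonnegative because $A^*A$ is positive semidefinite), and confirming that $x''-x'\in P_K$ is precisely the hypothesis licensing the use of $\beta_K$ rather than some $\beta_j$ with larger $j$. I would present the algebraic identity first, then invoke \eqref{beta}, and remark that the conclusion holds for \emph{any} pair of points satisfying the relation \eqref{y}, stationarity being irrelevant here.
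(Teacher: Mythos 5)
Your proof is correct and follows essentially the same route as the paper: the identity $z''-z'=(I-A^*A)(x''-x')$, taking the scalar product with $x''-x'$, and applying the definition of $\beta_K$ to $x''-x'\in P_K$. The preliminary discussion of monotonicity is an unnecessary detour (and you rightly discard it), but the core argument matches the paper's exactly, including the observation that stationarity is not actually used.
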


The above proposition will mainly be used backwards, i.e.~we will show that \eqref{ineq} does not hold and thereby conclude that $x''-x'\not\in P_N$.

\begin{proof}
We have $$z''-z'=( I-A^* A)x''+A^*b-( I-A^* A)x'-A^*b=(I-A^* A)(x''-x'),$$
so taking the scalar product with $x''-x'$ gives $$\re\scal{z''-z',x''-x'}=\fro{x''-x'}-\fro{A(x''-x')}\leq \delta_N^-\fro{x''-x'},$$ as desired. Note that it is not necessary to take the real part, but we leave it since scalar products in general can be complex numbers.
\end{proof}

As we shall see, the point $z'$ has a decisive influence on the coming sections. To begin with, it has the following interesting property.

\begin{proposition}
A point $x'$ is a stationary point of $\K_{reg}$ if and only if it solves the convex problem $$x'\in\argmin_x \Q_2(f)(x)+\fro{x-z'}.$$
\end{proposition}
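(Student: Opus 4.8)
The plan is to rewrite the proposed convex problem so that its first-order optimality condition becomes visibly identical to the stationarity condition \eqref{sp}. First I would complete the square in the objective: since $\fro{x-z'}=\fro{x}-2\re\scal{x,z'}+\fro{z'}$, the function $h(x):=\Q_2(f)(x)+\fro{x-z'}$ equals $\big(\Q_2(f)(x)+\fro{x}\big)-2\re\scal{x,z'}+\fro{z'}=2\G(x)-2\re\scal{x,z'}+\fro{z'}$ by the definition \eqref{gdef} of $\G$. Because $2\G$ is, by construction, the l.s.c.\ convex envelope of $f(x)+\fro{x}$, it is a convex l.s.c.\ function, and $h$ differs from it only by an affine term; hence $h$ is convex and l.s.c., which in particular justifies calling $\argmin_x h(x)$ a convex problem.

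Next I would use the elementary fact that for a convex l.s.c.\ function a point is a global minimizer if and only if $0$ lies in its subdifferential (one implication by definition of subgradient, the other by definition of minimizer, both valid regardless of whether a minimizer exists), together with the fact that for convex functions the Fr\'echet subdifferential $\hat\partial$ coincides with the usual convex subdifferential $\partial$. Thus $x'$ solves the convex problem iff $0\in\partial h(x')$. Since $h=2\G+\ell$ with $\ell(x)=-2\re\scal{x,z'}+\fro{z'}$ affine (hence finite and differentiable everywhere, with ``gradient'' $-2z'$), the sum rule for subdifferentials applies without any constraint-qualification caveat and gives $\partial h(x')=2\,\partial\G(x')-2z'$. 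Therefore $0\in\partial h(x')$ is equivalent to $z'\in\partial\G(x')$.

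Finally I would close the loop using \eqref{sp} and \eqref{y}: $x'$ is a stationary point of $\K_{reg}$ exactly when $(I-A^*A)x'+A^*b\in\partial\G(x')$, and the left-hand side is by definition \eqref{y} the point $z'$. So both properties reduce to the single inclusion $z'\in\partial\G(x')$, and the asserted equivalence follows.

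There is no genuinely hard step here: the only things to be careful about are the pieces of subdifferential calculus ($\hat\partial=\partial$ for convex functions, the positive-scaling rule, and the exact sum rule for a convex-plus-affine function), but these are precisely the facts already invoked in the excerpt when passing from the decomposition $\K_{reg}=2\G-\fro{\cdot}+\fro{A\cdot-b}$ to \eqref{sp}. The content of the proposition is thus just the algebraic observation that completing the square converts the $\fro{Ax-b}$ data term into the $\fro{x-z'}$ term while leaving the subgradient inclusion unchanged.
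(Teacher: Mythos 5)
Your proof is correct and follows essentially the same route as the paper: complete the square to write $\Q_2(f)(x)+\fro{x-z'}=2\G(x)-2\re\scal{x,z'}+\fro{z'}$, observe that both stationarity conditions reduce to the single inclusion $z'\in\partial\G(x')$, and use convexity to identify stationary points with minimizers. You spell out the subdifferential calculus a bit more carefully than the paper does (which simply asserts the functional "clearly has a well defined minimum"), but the argument is the same.
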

Note the absence of $A$ in the above formula, which in particular implies that $\Q_2(f)(x)+\fro{x-z'}$ is the convex envelope of $f(x)+\fro{x-z'}$.
\begin{proof}
As noted in \eqref{sp}, $x'$ is a stationary point of $\K_{reg}$ if and only if $z'\in\partial\G(x')$. By the same token, $x'$ is a stationary point of $$\Q_2(f)(x)+\fro{x-z'}=2\G(x)-2\re\scal{x,z'}+\fro{z'}$$ if and only if $z'\in\partial\G(x')$, and since the functional is convex (and clearly has a well defined minimum) the stationary points coincide with the set of minimizers.
\end{proof}

\section{The sparsity problem}\label{seccard}

We return to the sparsity problem, and consider $f(x)=\mu\card(x)$ where $\mu$ is a parameter and $\card (x)$ is the number of non-zero entries in the vector $x$. In this case we have,
\begin{equation}\label{l0000}\Q_2(\mu\card)(x)=\sum_{j=1}^n \mu-\para{\max\{\sqrt{\mu}-{ |x_j|},0\}}^2.\end{equation}
To recapitulate, we want to minimize \eqref{q1}, i.e.~\begin{equation}\label{t57}\kfi(x)=\mu\card(x)+\fro{Ax-b}\end{equation} which we replace by \eqref{q1reg}, i.e. \begin{equation}\label{fr2}\kfir(x)=\Q_2(\mu\card)(x)+\fro{Ax-b}.\end{equation}

\subsection{Equality of minimizers for $\kfi$ and $\kfir$}\label{s4}

As noted by Aubert, Blanc-Feraud and Soubies (see Theorems 4.5 and 4.8 in \cite{soubies-etal-siims-2015}),
$\kfir$ has the same global minima and potentially fewer local minima than $\kfi$ if \begin{equation}\label{alpha}\|A\|_{\infty,col}=\sup_i \|a_i\|_2\leq 1,\end{equation} where $a_i$ denotes the columns of $A$. Below we (essentially) reproduce their statement in the terminology of this paper. A proof is included in the appendix for completeness.

\begin{theorem}\label{celok5}
If $\|A\|_{\infty,col}<1$, then any local minimizer of $\kfir$ is a local minimizer of $\kfi,$ and the (nonempty) set of global minimizers coincide. If merely $\|A\|_{\infty,col}=1$, then any global minimizer of $\kfir$ which is not a global minimizer of $\kfi$, belongs to a connected component of global minimizers which includes at least two global minima of $\kfi$.
\end{theorem}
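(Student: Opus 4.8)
The plan is to work pointwise-in-index with the separable structure of $\Q_2(\mu\card)$ and reduce everything to a scalar analysis, then patch the scalar conclusions back together. Recall from \eqref{l0000} that
$$\Q_2(\mu\card)(x)=\sum_{j=1}^n\phi(x_j),\qquad \phi(t)=\mu-\big(\max\{\sqrt\mu-|t|,0\}\big)^2,$$
and that $\mu\card(x)=\sum_j\psi(x_j)$ with $\psi(t)=\mu$ if $t\neq 0$ and $\psi(0)=0$. The key scalar facts are: $\phi\leq\psi$ everywhere, $\phi=\psi$ precisely on $\{0\}\cup\{|t|\geq\sqrt\mu\}$, and $\phi(t)+t^2$ is the convex envelope of $\psi(t)+t^2$ (indeed $\phi(t)+t^2$ is convex and agrees with $\psi(t)+t^2$ at the endpoints $t=0$ and $|t|=\sqrt\mu$ of the interval where they differ). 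I would first record these, together with the global estimate $\Q_2(\mu\card)(x)+\|x\|^2\leq \mu\card(x)+\|x\|^2$ on all of $\R^n$, and the fact that both $\kfi$ and $\kfir$ are coercive and lower semicontinuous (using $\|A\|_{\infty,\mathrm{col}}\le 1$ so that the quadratic term does not destroy coercivity), hence attain their minima; this gives the nonemptiness claims.

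Next, the inequality $\Q_2(\mu\card)\le\mu\card$ immediately gives $\kfir\le\kfi$ pointwise, so $\min\kfir\le\min\kfi$. For the reverse, I would take a global minimizer $x^\star$ of $\kfir$ and argue that one can push each coordinate to a point where $\phi=\psi$ without increasing $\kfir$. Write $\kfir(x)=\Q_2(\mu\card)(x)-\|x\|^2+\|Ax-b\|^2$; fixing all coordinates but the $j$-th, the map $t\mapsto \phi(t)-t^2+\|A(x^\star+(t-x^\star_j)e_j)-b\|^2$ is, thanks to $\|a_j\|^2\le 1$, a sum of the concave function $\phi(t)-t^2$ (it has the constant-curvature piece $-t^2$ outside $[-\sqrt\mu,\sqrt\mu]$ and a flat piece $\mu-t^2$... more precisely $\phi(t)-t^2$ is concave) and a convex quadratic in $t$ with leading coefficient $\|a_j\|^2\le 1$ in magnitude relative to the $-t^2$; hence the restriction is concave in $t$ on each of the two rays $t\ge 0$, $t\le 0$, wait — I would instead argue that the restriction, being concave on $[0,\infty)$ and on $(-\infty,0]$ after accounting for $\|a_j\|\le 1$, attains its minimum over the relevant bounded search region at a point $t^\star\in\{0\}\cup\{|t|\ge\sqrt\mu\}$ where $\phi(t^\star)=\psi(t^\star)$. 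Repeating over all $j$ produces a point $\tilde x$ with $\kfir(\tilde x)\le\kfir(x^\star)$ and $\Q_2(\mu\card)(\tilde x)=\mu\card(\tilde x)$, whence $\kfi(\tilde x)=\kfir(\tilde x)=\min\kfir\le\min\kfi$, forcing equality of the minima and showing $\tilde x$ is a global minimizer of both. The same coordinatewise monotonicity argument upgrades any global minimizer of $\kfir$: it can be connected, within the global-minimum set of $\kfir$, to a common minimizer of $\kfi$ and $\kfir$.

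For the strict case $\|A\|_{\infty,\mathrm{col}}<1$, the coordinatewise restriction $t\mapsto\phi(t)-t^2+\|A(\cdots)-b\|^2$ is now \emph{strictly} concave on $[-\sqrt\mu,\sqrt\mu]$ (curvature $-2+2\|a_j\|^2<0$ strictly), so no local minimizer of $\kfir$ can have a coordinate in the open interval $(-\sqrt\mu,\sqrt\mu)\setminus\{0\}$; hence every local minimizer of $\kfir$ lies in the set where $\phi=\psi$, and on that set $\kfir=\kfi$ with $\kfir\le\kfi$ nearby, so a local minimizer of $\kfir$ is a local minimizer of $\kfi$. For the borderline case $\|A\|_{\infty,\mathrm{col}}=1$, fix a connected component $\mathcal C$ of the global-minimum set of $\kfir$ and pick $x^\star\in\mathcal C$; the coordinatewise restriction is concave but possibly affine on $[-\sqrt\mu,\sqrt\mu]$ exactly when $\|a_j\|=1$, so the set of minimizing values of that restriction is a whole subinterval, one of whose endpoints lies in $\{0\}\cup\{|t|\ge\sqrt\mu\}$; sliding coordinate $j$ from $x^\star_j$ to that endpoint keeps $\kfir$ constant and stays in $\mathcal C$ by connectedness. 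Doing this for every coordinate with $x^\star_j\in(-\sqrt\mu,\sqrt\mu)\setminus\{0\}$ yields a point of $\mathcal C$ at which $\phi=\psi$, i.e. a global minimizer of $\kfi$; and since at least one such coordinate gives a nondegenerate interval (otherwise $x^\star$ already satisfies $\phi=\psi$ and $\mathcal C$ meets the $\kfi$-minima, and a second $\kfi$-minimizer in $\mathcal C$ is produced by sliding a coordinate with $|x_j^\star|=\sqrt\mu$ or perturbing toward $0$), one obtains \emph{two} distinct global minimizers of $\kfi$ inside $\mathcal C$.

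\textbf{Main obstacle.} The delicate point is the borderline case $\|A\|_{\infty,\mathrm{col}}=1$: I need the coordinatewise restriction to be genuinely affine on $[-\sqrt\mu,\sqrt\mu]$ for the relevant index so that an entire segment of global minimizers exists, and I must verify that such a segment always supplies \emph{two} $\kfi$-minimizers (not just one), including the degenerate subcase where $x^\star$ is already a $\kfi$-minimizer — here one exhibits the second minimizer either by sliding a coordinate that sits exactly at $\pm\sqrt\mu$ (where $\phi$ is $C^1$ and the restriction flat to one side) or by noting the component is nontrivial. Making the "connectedness keeps us in $\mathcal C$" step rigorous — that the one-dimensional slide stays inside the same connected component rather than merely inside the minimum set — is the part that needs the most care, and I would handle it by checking the slide traces a connected (indeed linear) arc of global minimizers emanating from $x^\star$.
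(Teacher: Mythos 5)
Your overall strategy is essentially the paper's: exploit the separable form of $\Q_2(\mu\card)$, observe that the restriction of $\kfir$ to the $j$-th coordinate has second derivative $-2+2\|a_j\|^2\le 0$ on the region where $\Q_2(\mu\card)$ and $\mu\card$ differ, and push each offending coordinate to $0$ or $\sqrt{\mu}$ without increasing the objective; the strict-concavity argument for local minimizers when $\|A\|_{\infty,col}<1$ and the ``slide to either endpoint'' argument in the borderline case are likewise the paper's. However, one step of your write-up is genuinely false: neither $\kfi$ nor $\kfir$ is coercive. In this setting $A$ is underdetermined, so $\ker A\neq\{0\}$, and both $\mu\card(x)$ and $\Q_2(\mu\card)(x)$ are bounded above by $\mu n$; hence along any direction $v\in\ker A$ both functionals remain bounded while $\|x+tv\|\to\infty$. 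Your appeal to coercivity plus lower semicontinuity therefore does not deliver the nonemptiness claim. The repair is the one the paper uses: $\kfi$ attains its minimum because there are only finitely many supports and, for each fixed support, minimizing $\|Ax-b\|^2$ is a linear least-squares problem whose minimum is attained; once $\inf\kfi=\inf\kfir$ is established (your coordinate-pushing argument, run as a proof by contradiction), every minimizer of $\kfi$ is automatically a minimizer of $\kfir$, which gives nonemptiness for $\kfir$ as well.

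On the point you flag as the main obstacle (producing \emph{two} $\kfi$-minimizers per component when $\|A\|_{\infty,col}=1$): the clean resolution is that if a global minimizer of $\kfir$ has a coordinate with $0<|x_j|<\sqrt{\mu}$, then the concave coordinatewise restriction attains its minimum at an interior point of $[0,\sqrt{\mu}]$ and is therefore constant on that whole interval, so sending $|x_j|$ to $0$ and to $\sqrt{\mu}$ yields two distinct global minimizers joined by a segment of global minimizers (hence lying in the same component), and iterating over the remaining offending coordinates preserves this distinctness. Your proposed fallbacks for the degenerate subcase in which the chosen point is already a $\kfi$-minimizer (sliding a coordinate sitting exactly at $\pm\sqrt{\mu}$, or ``noting the component is nontrivial'') are not justified as stated: neither move is guaranteed to preserve the value of $\kfir$ nor to produce a second minimizer. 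You should be aware, though, that the paper's own proof only treats components containing a $\kfir$-minimizer that is \emph{not} already a $\kfi$-minimizer, so this degenerate subcase is a gap you share with the source rather than one you introduced.
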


\subsection{On the uniqueness of sparse stationary points}\label{erai}
Next we take a closer look at the structure of the stationary points. Given $N$ such that $\delta_{N}^-<1$, we will show that under certain assumptions the difference between two stationary points always has at least $N$ elements.
Hence if we find a stationary point with less than $N/2$ elements then we can be sure that this is the sparsest one. The main theorem reads as follows:

\begin{theorem}\label{thm:statpoint:vec}
Let $x'$ be a stationary point of $\kfir$, let $z'$ be given by \eqref{y}, and assume that
\begin{equation}\label{e4}|z_i'|\not\in\left[{(1-\delta_N^-)}{\sqrt{\mu}},\frac{{\sqrt{\mu}}}{1-\delta_N^-}\right]\end{equation} for all $i\in\{1,\ldots,n\}$.
If $x''$ is another stationary point of $\kfir$ then $$\card(x''-x') > N.$$
\end{theorem}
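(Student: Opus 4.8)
The plan is to use Proposition~\ref{thm:statpoint} in the contrapositive, exactly as the authors suggest. Suppose for contradiction that $x'$ and $x''$ are distinct stationary points of $\kfir$ with $\card(x''-x')\leq N$, so that $x''-x'\in P_N$ and $\beta_N$ applies. Then Proposition~\ref{thm:statpoint} gives $\re\scal{z''-z',x''-x'}\leq (1-\beta_N^2)\fro{x''-x'}$. My goal is to show that the hypothesis \eqref{e4}, together with the characterization of stationary points coordinate-wise, forces the \emph{reverse} strict inequality, which is the contradiction.

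First I would reduce everything to a coordinate-by-coordinate statement. Since $\Q_2(\mu\card)$ separates as a sum over coordinates (see \eqref{l0000}), the stationarity condition \eqref{sp}, i.e.\ $z'\in\partial\G(x')$ with $2\G(x)=\Q_2(\mu\card)(x)+\fro{x}$, decouples: for each index $i$, $z_i'$ lies in the subdifferential at $x_i'$ of the scalar convex function $g(t)=\tfrac12\big(\mu-(\max\{\sqrt\mu-|t|,0\})^2\big)+\tfrac12 t^2$, and similarly for $z_i''$ and $x_i''$. A direct computation of this scalar convex envelope shows it is a ``Huber-like'' function: it equals $\tfrac12 t^2+\tfrac{\mu}{2}$ truncated/smoothed so that $g'(t)=\sqrt{\mu}\,\sgn(t)$ on $|t|\le\sqrt\mu$ and $g'(t)=t$ on $|t|\ge\sqrt\mu$. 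Hence the scalar stationarity relation reads: if $|x_i'|\geq\sqrt\mu$ then $z_i'=x_i'$; if $x_i'=0$ then $|z_i'|\leq\sqrt\mu$; and $|x_i'|\in(0,\sqrt\mu)$ never occurs for a stationary point (the relevant second derivative is $-2+2\|a_i\|^2\le 0$, as in \eqref{py}, so such points are not stationary — or, more cleanly, the subdifferential of the convex envelope simply skips these values). The upshot is that each coordinate is in one of two ``regimes'': either $x_i=0$ (and $|z_i|\le\sqrt\mu$), or $x_i=z_i$ with $|x_i|\ge\sqrt\mu$.

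Next I would estimate $\re\scal{z''-z',x''-x'}$ from below, coordinate by coordinate, splitting the index set $\{1,\dots,n\}$ according to which regime $x_i'$ and $x_i''$ are in. On indices where both are in the ``large'' regime, $z_i'=x_i'$ and $z_i''=x_i''$, so the contribution is exactly $|x_i''-x_i'|^2$. On indices where $x_i'=x_i''=0$ the contribution is $0$, but these indices also contribute $0$ to $\fro{x''-x'}$, so they are harmless. The delicate indices are the ``mixed'' ones, say $x_i'=0$ (so $|z_i'|\le\sqrt\mu$ but also, by \eqref{e4}, $|z_i'|<\beta_N^2\sqrt\mu$) and $x_i''=z_i''$ with $|x_i''|\ge\sqrt\mu$ (and by \eqref{e4}, $|x_i''|>\tfrac{1}{\beta_N^2}\sqrt\mu$). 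There the contribution is $\re\big((z_i''-z_i')\overline{(x_i''-x_i')}\big)=\re\big((x_i''-z_i')\overline{x_i''}\big)=|x_i''|^2-\re(z_i'\overline{x_i''})\ge |x_i''|^2-|z_i'|\,|x_i''|$. Using the two strict inequalities from \eqref{e4}, $|z_i'|<\beta_N^2\sqrt\mu<\beta_N^2|x_i''|$, so this contribution is strictly larger than $(1-\beta_N^2)|x_i''|^2=(1-\beta_N^2)|x_i''-x_i'|^2$. Summing over all coordinates yields $\re\scal{z''-z',x''-x'}>(1-\beta_N^2)\fro{x''-x'}$ (the inequality is strict because $x''\neq x'$ forces at least one coordinate to contribute, and on every contributing coordinate we have a $\ge$ with strictness on the mixed ones; if there are no mixed coordinates then all contributing coordinates are ``large–large'' and give exactly $\fro{x''-x'}>(1-\beta_N^2)\fro{x''-x'}$ since $\beta_N>0$). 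This contradicts Proposition~\ref{thm:statpoint}, so $\card(x''-x')>N$.

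The main obstacle I anticipate is the careful bookkeeping in the ``mixed'' case, in particular making sure the strictness is genuine and handling the degenerate subcase where $x''-x'$ is supported entirely on ``large–large'' coordinates (there one needs $\beta_N>0$, but not \eqref{e4}). A secondary technical point is justifying rigorously that the intermediate regime $0<|x_i|<\sqrt\mu$ cannot host a coordinate of a stationary point — this is where one must be precise about the Fréchet subdifferential of $\kfir$ versus the subdifferential of the convex function $\G$; the cleanest route is to work entirely with $\G$ via \eqref{sp}, since $\partial\G$ is an honest convex subdifferential and its effective domain in each coordinate is exactly $\{0\}\cup\{|t|\ge\sqrt\mu\}$ in terms of the relation between $x_i$ and $z_i$. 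Also worth a line: the edge case $|x_i'|=\sqrt\mu$ exactly, where $z_i'$ can range over an interval — but this is absorbed into the ``large'' regime analysis without trouble since there $|z_i'|=|x_i'|$ still need not hold; one checks the subdifferential at $t=\sqrt\mu$ is the singleton $\{\sqrt\mu\}$ anyway, so no special care is needed.
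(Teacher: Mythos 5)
Your overall strategy --- running Proposition \ref{thm:statpoint} backwards and bounding $\re\scal{z''-z',x''-x'}$ coordinate by coordinate --- is exactly the paper's, and the cases you do treat are handled correctly. However, there is a genuine gap: your claim that a coordinate of a stationary point can never satisfy $0<|x_i|<\sqrt{\mu}$ is false, and the cases you thereby discard are precisely the delicate ones. The scalar function $g$ in \eqref{eq:gdef} is the ``reverse Huber'' function, equal to $\sqrt{\mu}|x|$ for $|x|\le\sqrt{\mu}$ and to $\frac{\mu+x^2}{2}$ for $|x|\ge\sqrt{\mu}$; its subdifferential \eqref{eq:vectorsubgrad} at $0<|x|<\sqrt{\mu}$ is the nonempty singleton $\{\sqrt{\mu}\,x/|x|\}$, so nothing is ``skipped'': a coordinate in this middle regime is perfectly compatible with stationarity, it merely forces $|z_i|=\sqrt{\mu}$. (A one-dimensional example: $n=1$, $A=1/2$, $b=1$, $\mu=1$ gives the stationary point $x=2/3\in(0,\sqrt{\mu})$.) The second-derivative observation \eqref{py} you invoke concerns minimizers, presupposes $\|A\|_{\infty,col}\le 1$ (not assumed in this theorem), and in any case does not rule out stationary points in the Fr\'{e}chet sense. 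For the distinguished point $x'$ the hypothesis \eqref{e4} does exclude $|z_i'|=\sqrt{\mu}$ when $\beta_N\le 1$, and hence excludes the middle regime for $x'$ --- that is the whole point of \eqref{e4} --- but no such hypothesis is imposed on $x''$, so $x''$ may well have coordinates with $0<|x_i''|<\sqrt{\mu}$ and $z_i''=\sqrt{\mu}\,x_i''/|x_i''|\ne x_i''$. Your case analysis omits these coordinates entirely.

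These omitted configurations are exactly where the work lies: for instance when $|x_i'|>\sqrt{\mu}/\beta_N^2$ (so $z_i'=x_i'$) and $0<|x_i''|<\sqrt{\mu}$ (so $|z_i''|=\sqrt{\mu}$), the contribution is no longer of the clean form $|x|^2-\re{(z\overline{x})}$, and one must minimize the quotient $\re (z_i''-z_i')\overline{(x_i''-x_i')}/|x_i''-x_i'|^2$ over the complex phases and the admissible radii; this is the ``middle case'' computation \eqref{gt} in the paper's Lemma \ref{lemma:bnd1}, with the analogous case appearing in Lemma \ref{lemma:bnd2}. Until you supply the lower bound $(1-\beta_N^2)|x_i''-x_i'|^2$ for these mixed terms (and separately dispose of the regimes $\beta_N=1$ and $\beta_N>1$, where \eqref{e4} no longer pins down the coordinates of $x'$ and one must instead argue via monotonicity of $\partial g$, as the paper does), the proof is incomplete.
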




Note that we allow $\delta_N^-<0$ in the above theorem, in which case the condition on $z'$ is automatically satisfied. The proof depends on a sequence of lemmas, and is given at the end of the section. Clearly, we will rely on Proposition \ref{thm:statpoint}, which requires an investigation of the functional $\G$ \eqref{gdef} and in particular its sub-differential. Introducing the function $g$ as
\begin{equation}
g(x) =
\begin{cases}
\frac{\mu + {|x|^2}}{2} & |x| \geq {\sqrt{\mu}} \\
\sqrt{\mu}|x| & 0 \leq |x| \leq {\sqrt{\mu}}
\end{cases}.
\label{eq:gdef}
\end{equation}
we get \begin{equation}\label{Gg}
\G(x)=\sum_{j=1}^n g(x_j).\end{equation}
Its sub-differential is given by
\begin{equation}
\partial g(x) =
\begin{cases}
\{ x\} & |x| \geq {\sqrt{\mu}} \\
\{\sqrt{\mu}\frac{x}{|x|}\} & 0 < |x| \leq {\sqrt{\mu}} \\
\sqrt{\mu}~\D & x = 0
\end{cases}
\label{eq:vectorsubgrad}
\end{equation} where $\D$ is the closed unit disc in $\C$ or, if working over $\R$, $\D=[-1,1]$. In the remainder we suppose for concreteness that we work over $\C$ (but show the real case in pictures).
Note that the sub-differential consists of a single point for each $x \neq 0$.
Figure~\ref{fig:gfunk} illustrates $g$ and its sub-differential.
\begin{figure}[htb]
\begin{center}
\includegraphics[width=40mm]{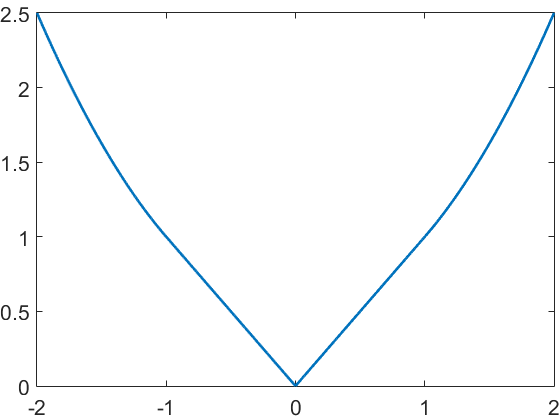}
\includegraphics[width=40mm]{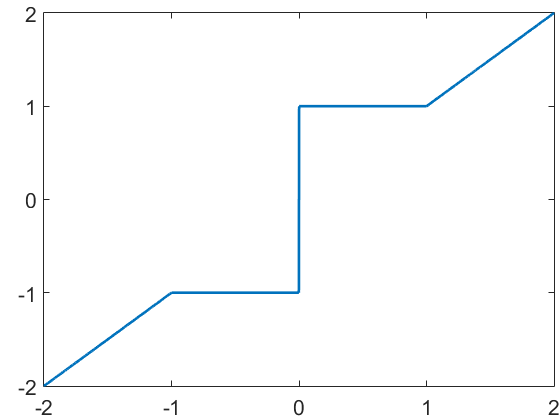}
\end{center}
\caption{The function $g(x)$ (left) and its sub-differential $\partial g(x)$ (right), for $\mu=1$. Note that the sub-differential contains a unique element everywhere except at $x=0$.}
\label{fig:gfunk}
\end{figure}


The following two results establish a bound on the sub-gradients of $\G$.  We begin with some one-dimensional estimates of $g$.

\begin{lemma}\label{lemma:bnd1}
Assume that $z_0 \in \partial g(x_0)$ and $\delta_N^->0$.
If
\begin{equation}
\left|
z_0
\right| > \frac{\sqrt{\mu}}{1-\delta_N^-}
\label{eq:subgradbnd1}
\end{equation}
then for any $x_1,~z_1$ with $z_1\in \partial g(x_1)$ and $x_1\neq x_0$, we have
\begin{equation}\label{fd}
\re(z_1 -z_0)\overline{(x_1-x_0)} > \delta_N^- |x_1-x_0|^2 .
\end{equation}
\end{lemma}
\begin{proof}
By rotational symmetry (i.e. $\partial g(e^{i\phi}x)= e^{i\phi} \partial g(x)$), it is no restriction to assume that $z_0>0$. By $\frac{1}{1-\delta_N^-}> 1$ and \eqref{eq:subgradbnd1}, we see that $z_0>\sqrt{\mu}$, and hence the identity $z_0\in\partial g(x_0)$ and \eqref{eq:vectorsubgrad} together imply that $z_0=x_0$ and in particular that $x_0\in\R$ and
\begin{equation}
\label{onion}x_0>\frac{\sqrt{\mu}}{1-\delta_N^-}.
\end{equation}
To prove the result we now minimize the quotient
\begin{equation}\label{gt}\frac{\re(z_1 -z_0)\overline{(x_1-x_0)}}{ |x_1-x_0|^2}\end{equation} and show that it is larger than $\delta_N^-$.
There are three cases to consider; $x_1=0$, $0<|x_1|<\sqrt{\mu}$ and $|x_1|\geq \sqrt{\mu}$. The latter case is easy since then $z_1-z_0=x_1-x_0$ and since $\delta_N^- < 1$, the desired conclusion is immediate.

For the two other cases we first show that $z_1$ and $x_1$ can be assumed to be real.
If $x_1=0$ the above quotient is equivalent to $1-\re(z_1/x_0)$ over $z_1 \in \sqrt{\mu}\D$, since $x_0=z_0$ is real and positive, which is clearly minimized for the real value $z_1 = \sqrt{\mu}$.

For the middle case, $z_1$ and $x_1$ have the same angle with $\R$. We first hold the radii fixed and only consider the angle as an argument. Recall $z_0=x_0$ and set $R=|z_1|/|x_1|$. Then we have
 \[ \begin{split} & \re(z_1 -z_0)\overline{(x_1-x_0)} ={R|x_1|^2-(R+1)\re x_1 \overline{x_0}+|x_0|^2} \\ 
 & =\frac{1}{2}\Big((R-1)|x_1|^2+(R+1)|x_1-x_0|^2 + (1-R) |x_0|^2 \Big).  \end{split}  \] So the quotient \eqref{gt} only depends on $|x_1-x_0|^2$ (for fixed radii),
which shows that the quotient is minimized when $x_1$ is real (which then automatically applies to $z_1$ as well).

Summarizing the above we may thus assume that $x_1$ and $z_1$ are real and $x_1\in[-\sqrt{\mu},\sqrt{\mu}]$, which simplifies the quotient \eqref{gt} to $\frac{x_0-z_1}{x_0- x_1}$. We now hold $x_1,~z_1$ fixed and consider $x_0$ as the variable. Recall that $|z_1|\geq |x_1|$. If these are negative we immediately get that the quotient is $\geq 1>\delta_{N}^-$ and the proof is done. In the positive case, the quotient is minimized when $x_0$ is as small as possible (since $z_1\geq x_1$). By \eqref{onion} we hence conclude that the minimum of \eqref{gt} is strictly greater than $\frac{\frac{\sqrt{\mu}}{1-\delta_N^-}-z_1}{ \frac{\sqrt{\mu}}{1-\delta_N^-}-x_1}$. The minimum of this is in its turn clearly attained at $x_1=0$ and $z_1=\sqrt{\mu}$. Summing up, we have that
$$\frac{\re(z_1 -z_0)\overline{(x_1-x_0)}}{ |x_1-x_0|^2}>\frac{\frac{\sqrt{\mu}}{1-\delta_N^-}-z_1}{ \frac{\sqrt{\mu}}{1-\delta_N^-}-x_1}\geq \frac{\frac{\sqrt{\mu}}{1-\delta_N^-}-\sqrt{\mu}}{ \frac{\sqrt{\mu}}{1-\delta_N^-}}=\delta_N^-.$$
\end{proof}

\begin{lemma}\label{lemma:bnd2}

Assume that $z_0 \in \partial g(x_0)$ and $\delta_N^->0$.
If
\begin{equation}
\left|
z_0
\right| < {(1-\delta_N^-)}{\sqrt{\mu}}
\label{eq:subgradbnd2}
\end{equation}
then for any $x_1,~z_1$ with $z_1\in \partial g(x_1)$, $x_1\neq x_0$, we have
\begin{equation}\label{fd1}
\re(z_1 -z_0)\overline{(x_1-x_0)} > \delta_N^- |x_1-x_0|^2.
\end{equation}\end{lemma}
\begin{proof}
The proof is similar to the previous lemma. We first note that $x_0=0$, $x_1\neq 0$ and that $z_0$ may be assumed to be in $(0,(1-\delta_N^-)\sqrt{\mu})$ by rotational symmetry. For a fixed radius $R = \frac{|z_1|}{|x_1|}$ the quotient \[\frac{\re(z_1 -z_0)\overline{(x_1-x_0)}}{ |x_1-x_0|^2}=\frac{\re(z_1 -z_0)\overline{x_1}}{ |x_1|^2} = R- z_0\frac{\re  x_1}{|x_1|^2} \] is smallest when $x_1$ is real valued and positive (which then also applies to $z_1$ which by \eqref{eq:vectorsubgrad} equals $\max(x_1,\sqrt{\mu})$). The expression in question then becomes $R- \frac{z_0}{x_1}$, which is minimized by maximizing $z_0$.
For any choice of $x_1$, \eqref{eq:subgradbnd2} implies that our expression is strictly bigger than
 $$\frac{z_1 -{(1-\delta_N^-)}{\sqrt{\mu}}}{ x_1}=\frac{\max(x_1,\sqrt{\mu}) -{(1-\delta_N^-)}{\sqrt{\mu}}}{ x_1}.$$
 Basic calculus shows that the minimum of this quantity is attained at $x_1 = \sqrt{\mu}$ and equals $\delta_N^-,$ as desired.
\end{proof}

We are now ready to prove Theorem \ref{thm:statpoint:vec}.

\begin{proof}[Proof of Theorem \ref{thm:statpoint:vec}]
By Proposition \ref{thm:statpoint} it suffices to verify \begin{equation}\label{g}
\re\skal{z''-z',x''-x'} > \delta_N^- \|x''-x'\|_2^2,\quad x''\neq x'.
\end{equation} The claim will follow by contradiction.
Suppose first that $\delta_N^->0$. Since $\partial \G(x)=\sum_{j=1}^n \partial g(x_j)$, Lemmas \ref{lemma:bnd1} and \ref{lemma:bnd2} imply that
$$\re(z''_i-z'_i) \overline{(x_i''-x_i')} > \delta_N^- |x_i''-x_i'|^2,
$$
for all $i$ with $x_i''-x_i' \neq 0$. Since $x_i''-x_i'=0$ gives $(z''_i-z'_i)\overline{(x_i''-x'_i)}=0$
summing over $i$ gives the result.

Suppose now that $\delta_N^-<0$. By \eqref{g} it suffices to prove that $\re\skal{z''-z',x''-x'}\geq 0$ for all $x''\neq x'$. Fix $i$ in $\{1,\ldots,n\}$. By rotational symmetry it is easy to see that we can assume that $x'_i,z'_i\geq 0$. Moreover, for fixed values of $|z_i''|$ and $|x''_i|$ (but variable complex phase) it is easy to see that $\re (z_i''-z'_i)(x_i''-x_i')$ achieves min when these are also real, i.e. we can assume that $x''_i,z''_i\in\R$. Since the graph of $\partial g$ is non-decreasing it follows that $(z_i''-z_i')(x_i''-x_i') \geq 0$ for all $i$, as desired.

It remains to consider the case when $\delta_N^-=0$, and as above we reach a contradiction if we prove that $\re\skal{z''-z',x''-x'}> 0$. Again we can assume that $x'_i,z'_i\geq 0$ and that $x''_i,z''_i\in\R$. Then \eqref{e4} implies that $z_i'\neq \sqrt{\mu}$ for all $1\leq i \leq n$, which via $z'_i\in \partial g(x'_i)$ also implies that $x_i'\not\in (0,\sqrt{\mu}]$. If $x''\neq x'$ we must have $x''_i\neq x_i'$ for some $i$. Using that $z_i''\in \partial g(x_i'')$, examination of \eqref{eq:vectorsubgrad} yields that also $z_i''\neq z_i'$.
With this at hand we see that the left hand side of \eqref{g} is strictly positive, whereas the right equals 0, which again is a contradiction.

\end{proof}

\subsection{Conditions on global minimality}\label{erai1}

\begin{theorem}\label{thm:globalpoint2}
Let $A$ satisfy $\|A\|_{\infty,col}\leq 1$, let $x'$ be a stationary point of $\kfir$ and let $z'$ be given by \eqref{y}. Assume that
\begin{equation}\label{cond3}|z_i'|\not\in\left[{(1-\delta_N^-)}{\sqrt{\mu}},\frac{1}{1-\delta_N^-}{\sqrt{\mu}}\right],\quad 1\leq i\leq n.\end{equation}
If \begin{equation}\label{cond2}2\mu\card(x') +\fro{Ax'-b}< \mu N+\mu,\end{equation} then $x'$ is the unique global minimum of $\kfi$ and $\kfir$.
\end{theorem}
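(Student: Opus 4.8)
The plan is to show that $x'$ is simultaneously (i) the unique stationary point of $\kfir$ with support of size $\le N/2$ — in fact the unique one in a suitable sense — and (ii) a point whose functional value $\kfir(x')$ is below the value at any competitor, using the combinatorial observation that any \emph{other} stationary point must be much less sparse. The starting observation is that global minimizers of $\kfir$ exist and, by Theorem \ref{celok5}, coincide with global minimizers of $\kfi$ (here $\|A\|_{\infty,col}\le 1$ is exactly the hypothesis needed), so it suffices to show that $\kfir(x')\le \kfir(x'')$ for every global minimizer $x''$ of $\kfir$, with equality forcing $x''=x'$.

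First I would invoke Theorem \ref{thm:statpoint:vec}: the hypothesis \eqref{cond3} is precisely \eqref{e4}, so any stationary point $x''\neq x'$ of $\kfir$ satisfies $\card(x''-x')>N$, hence in particular $\card(x'')>N-\card(x')$. Combined with the fact that $x''$ — being a global minimizer — is itself a stationary point, this gives a lower bound on $\card(x'')$. Next I would produce a lower bound on $\kfir(x'')$ using the structure of $\Q_2(\mu\card)$ from \eqref{l0000}: since each summand $\mu-(\max\{\sqrt\mu-|x_j|,0\})^2$ lies in $[0,\mu]$ and equals $\mu$ as soon as $|x_j|\ge\sqrt\mu$, one has $\Q_2(\mu\card)(x'')\ge$ (number of "large" entries)$\cdot\mu$ combined with the contribution of the data term; more to the point, I would estimate $\kfir(x'')$ from below in terms of $\card(x'')$ and $\|Ax''-b\|^2$, then feed in $\card(x'')> N-\card(x')$. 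On the other side, the hypothesis \eqref{cond2}, namely $2\mu\card(x')+\fro{Ax'-b}<\mu N+\mu$, is an \emph{upper} bound on $\kfir(x')$ plus an extra $\mu\card(x')$ slack; the extra factor of $2$ on $\card(x')$ is exactly what is needed to absorb the "$\Q_2(\mu\card)(x'')\ge\mu\,(\text{something}\ge N-\card(x'))$" term and still beat it. So the inequality chain is: $\kfir(x')\le \mu\card(x')+\fro{Ax'-b}$ (since $\Q_2(\mu\card)\le\mu\card$), and I must show this is $<\kfir(x'')$; using $\kfir(x'')\ge \mu(\text{number of entries of }x''\text{ with }|x''_j|\ge\sqrt\mu)+\dots$ and the cardinality gap $N< \card(x''-x')\le \card(x'')+\card(x')$, rearranging \eqref{cond2} into $\mu\card(x')+\fro{Ax'-b}<\mu(N-\card(x'))+\mu\le \mu\card(x'')+\mu$ does the job once the small-entry and data-term bookkeeping is handled.

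The step I expect to be the main obstacle is the lower bound on $\kfir(x'')$: because $\Q_2(\mu\card)$ is not simply $\mu\card$ but its quadratic envelope, entries of $x''$ with $0<|x''_j|<\sqrt\mu$ contribute less than $\mu$, so one cannot directly say $\Q_2(\mu\card)(x'')\ge \mu\card(x'')$. The resolution is to split the support of $x''-x'$ (which has $>N$ elements) according to whether $|x''_j|\ge\sqrt\mu$, and to notice that a "small" entry $x''_j$ with $0<|x''_j|<\sqrt\mu$ at a coordinate where $x'_j=0$ still contributes a genuinely positive amount $\mu-(\sqrt\mu-|x''_j|)^2 = 2\sqrt\mu|x''_j|-|x''_j|^2$, and — crucially — such a coordinate cannot be a stationary-point coordinate of a \emph{global} minimizer unless it is balanced; more cleanly, I would argue that for a global minimizer the "small nonzero" coordinates can be thresholded to $0$ or $\sqrt\mu$ without increasing the value (this is the mechanism already used in the proof of Theorem \ref{celok5}), so without loss of generality $x''$ has all nonzero entries of modulus $\ge\sqrt\mu$, whence $\Q_2(\mu\card)(x'')=\mu\card(x'')$ and $\kfir(x'')=\kfi(x'')\ge \mu\card(x'')\ge \mu(N+1-\card(x'))$. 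Plugging that into the rearranged \eqref{cond2} yields $\kfir(x')\le\mu\card(x')+\fro{Ax'-b}<\mu(N+1-\card(x'))\le\kfir(x'')$, a strict inequality, which both shows $x'$ is a global minimizer and that no other global minimizer exists; since by Theorem \ref{celok5} the global minimizers of $\kfi$ and $\kfir$ coincide, $x'$ is the unique global minimum of both, completing the proof.
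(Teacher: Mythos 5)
Your proposal is correct and follows essentially the same route as the paper: Theorem \ref{celok5} for existence and coincidence of global minimizers, Theorem \ref{thm:statpoint:vec} for the cardinality gap $\card(x'')\geq N-\card(x')+1$, and the chain $\kfir(x')\leq\kfi(x')<\mu(N+1-\card(x'))\leq\kfi(x'')=\kfir(x'')$. The only difference is presentational: the paper sidesteps your ``threshold the small entries of $x''$'' step (which would need a word on why the thresholded point cannot collapse onto $x'$ itself) by invoking Theorem \ref{celok5} directly to produce a competitor that is a global minimizer of both $\kfi$ and $\kfir$, for which $\kfir(x'')=\kfi(x'')\geq\mu\card(x'')$ holds automatically.
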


Obviously, it is desirable to pick $N$ as large as possible, which is limited by \eqref{cond3} and the fact that $\delta_N^-$ increases with $N$. Also note that $\delta_N^-\geq 0$ since $1-\delta_1^-\leq \min_i\{\|a_i\|_2^2\}=\|A\|_{\infty,col}^2\leq 1$ so $\delta_1^-\geq 0$.

\begin{proof}
Set $k=\card(x')$ and assume that $x'$ is not the unique global minimizer of $\kfir$. Let $x''$ be another. Either $\kfir(x)=\kfi(x)$ for $x=x''$ or $x''$ is part of a connected component of global minimizers to $\kfir$ including two points that satisfy the equation, by Theorem \ref{celok5}. Since one of these must be different from $x'$, we may assume that $\kfir(x'')=\kfi(x'')$.
Theorem \ref{thm:statpoint:vec} then shows that $\card (x'')\geq N-k+1$.
Since $\kfi(x'')=\kfir(x'')$ and $\kfi(x')\geq\kfir(x')$ it follows from \eqref{cond2} that
$$\kfir(x'')-\kfir(x')\geq \kfi(x'')-\kfi(x')\geq \mu(N-k+1)-(\mu k +\fro{Ax'-b})>0.$$
This is a contradiction, and hence $x'$ must be the unique global minimizer of $\kfir$. By Theorem \ref{celok5} it then follows that $x'$ is also unique minimizer of $\kfi$.
\end{proof}

\subsection{Finding the oracle solution}
In this final subsection we return to the compressed sensing problem of retrieving a sparse vector $x_0$ given corrupted measurements $b=Ax_0+\ep$, where $\ep$ is noise and $x_0$ is sparse. More precisely we set $S=\supp x_0$ where we assume that $\#S=k$ is much smaller than $m$ -- the amount of rows in $A$ (i.e. number of measurements). Here $\#S$ denotes the amount of elements in $S$ and the noise can be of any type, our theory only relies on knowledge of $\|\ep\|$.

We let $x_{0,j}$ denote the elements of the vector $x_0$. Let $A_S$ denote the matrix obtained from $A$ by setting columns outside of $S$ to $0$, and let $x_{or}$ denote the least squares solution to $A_S x_{or}=b$. Note that this is the so called ``oracle solution'' discussed in the introduction, which can also be written $x_{or}=(A_S^*A_S)^\dagger A_S^*b$ where $(A_S^*A_S)^\dagger$ denotes the Moore-Penrose inverse.

Our first result collects some general observations about the oracle solution.

\begin{proposition}\label{p11}
Let $A$ satisfy $\|A\|_{\infty,col}\leq 1$ and let $c>0$. If $$|x_{0,j}|>c+\frac{\|\ep\|_2}{\sqrt{1-\delta_k^-}}$$ for all $j\in S$ then the oracle solution $x'=x_{or}$ satisfies $\supp (x')=\supp (x_0)$. We also have $|x_{j}'|>c,~ j\in S,$ $\|Ax'-b\|_2\leq \|\epsilon\|_2$, and $$\|x'-x_0\|_2\leq \frac{\|\ep\|_2}{\sqrt{1-\delta_k^-}}.$$
\end{proposition}
\begin{proof}
Consider the equation $A_Sx=Ax_0+\ep$ and note that $Ax_0=A_Sx_0$. The least squares solution is obtained by applying $(A_S^*A_S)^\dagger A_S^*$ which gives the solution $$x'=x_0+(A_S^*A_S)^\dagger A_S^*\ep=x_0+\eta,$$ where we set $(A_S^*A_S)^\dagger A_S^*\ep=\eta$. By construction of the Moore-Penrose inverse, $\supp \eta\subset S$, and hence $$A\eta=A_S\eta=P_{\text{Ran} A_S}\ep,$$ where $P_{\text{Ran} A_S}$ denotes the orthogonal projection onto the range of $A_S.$ In particular, $$\|\eta \|_2\leq\frac{\|A_S\eta\|_2}{\sqrt{1-\delta_k^-}}=\frac{\|P_{\text{Ran} A_S}\ep\|_2}{\sqrt{1-\delta_k^-}}\leq \frac{\|\ep\|_2}{\sqrt{1-\delta_k^-}},$$
which establishes the final inequality in the proposition. Also $\|\eta\|_{\infty}\leq \|\eta\|_{2}$ which implies \begin{equation}\label{sofg}|x_{j}'|\geq |x_{0,j}|-|\eta_j|>c+\frac{\|\ep\|_2}{\sqrt{1-\delta_k^-}}-\frac{\|\ep\|_2}{\sqrt{1-\delta_k^-}}=c,\quad j\in S.\end{equation} This also gives $\supp x'=\supp x_0$ since by construction we clearly have \[\supp x' \subset \supp x_0\cup\supp \eta \subset S.\]
Finally, consider $Ax'-b$, which equals \begin{equation}\label{sof}\begin{aligned}&Ax'-b=A_Sx'-b=\\&A_Sx_0+A_S(A_S^*A_S)^\dagger A_S^*\ep-(A_Sx_0+\ep)=(P_{\text{Ran} A_S}-I)\ep=-P_{(\text{Ran} A_{S})^{\perp}}\ep\end{aligned}\end{equation}
and hence $\|Ax'-b\|_2\leq \|\epsilon\|_2$.
\end{proof}
The below proposition shows that the oracle solution is under mild assumptions a local minimizer of $\kfir$, which we denote by $x'$ for notational consistency.

\begin{proposition}\label{p1}
Let $A$ satisfy $\|A\|_{\infty,col}\leq 1$. If $\|\ep\|_2< {\sqrt{\mu}}$ and $$|x_{0,j}|>\sqrt{\mu}+\frac{\|\ep\|_2}{\sqrt{1-\delta_k^-}}$$ for all $j\in S$ then the oracle solution $x'=x_{or}$ is a strict local minimum to $\kfir$ with $\supp (x')=\supp (x_0)$. We also have $|x_{j}'|>\sqrt{\mu},~ j\in S,$ $\|Ax'-b\|_2\leq \|\epsilon\|_2$, and $$\|x'-x_0\|_2\leq \frac{\|\ep\|_2}{\sqrt{1-\delta_k^-}}.$$
\end{proposition}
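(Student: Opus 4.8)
The plan is to verify that $x' = x_S$ is a stationary point of $\kfir$ by checking the subdifferential condition \eqref{sp}, namely $z' \in \partial \G(x')$ where $z' = (I-A^*A)x' + A^*b$, and then to upgrade stationarity to a strict local minimum using the fact that $g$ has strictly positive curvature away from $0$ on the set $\{|x|>\sqrt{\mu}\}$ combined with $\|A\|_{\infty,col}\le 1$. First I would record the basic properties of the oracle solution: since $x_S$ is the least squares solution of $A_Sx=b$, the residual $A x' - b = A_S x' - b$ is orthogonal to the column space of $A_S$, hence $A_S^*(Ax'-b)=0$, which gives $(A^*b)_j = (A^*Ax')_j$ for $j\in S$ and therefore $z_j' = x_j'$ for $j \in S$. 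For $j \notin S$ we have $x_j'=0$, so we must show $|z_j'| \le \sqrt{\mu}$, i.e. $z_j' \in \sqrt{\mu}\D = \partial g(0)$.

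The second step is the support/sign estimate. For $j \in S$, the bound $\|x'-x_0\|\le \|\ep\|/\beta_K$ follows from $A_S(x'-x_0) = (b-\ep) - Ax_0 \cdot(\text{restricted})$; more precisely $A_S x' - A_S x_0 = A_Sx' - b + \ep_S'$-type manipulation, but cleanly: $x_0$ is feasible for the least-squares problem on support $S$, and $A_S(x'-x_0)$ equals the projection of $-\ep$ onto $\mathrm{ran}(A_S)$, so $\|A_S(x'-x_0)\| \le \|\ep\|$; since $x'-x_0$ is supported on $S$ and $\card(x'-x_0)\le K$, the definition \eqref{beta} of $\beta_K$ gives $\beta_K\|x'-x_0\| \le \|A(x'-x_0)\| \le \|\ep\|$. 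Combined with the hypothesis $|x_{0,j}| > \sqrt{\mu} + \|\ep\|/\beta_K$ this yields $|x_j'| \ge |x_{0,j}| - \|x'-x_0\| > \sqrt{\mu}$ for every $j \in S$, so in particular $\supp(x') = S = \supp(x_0)$ and $x_j' \ne 0$ there. Also $\|Ax'-b\| = \|A_Sx_S - b\| \le \|A_S x_0 - b\| = \|\ep\|$ since $x_S$ minimizes that residual and $x_0$ is one competitor.

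The third step is to finish the stationarity check off the support. For $j\notin S$ I need $|z_j'|\le\sqrt{\mu}$. Write $z_j' = (A^*b - A^*Ax')_j = a_j^*(b - Ax') = -a_j^*(Ax'-b)$, so $|z_j'| \le \|a_j\|\,\|Ax'-b\| \le 1\cdot\|\ep\| < \sqrt{\mu}$, using $\|a_j\|\le\|A\|_{\infty,col}\le1$ and the residual bound from step two and the hypothesis $\|\ep\|<\sqrt{\mu}$. Together with $z_j' = x_j' \in \partial g(x_j')$ for $j\in S$ (valid since $|x_j'|\ge\sqrt{\mu}$, where $\partial g(x)=\{x\}$), this verifies $z'\in\partial\G(x')$, so $x'$ is a stationary point of $\kfir$.

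The final step is to promote this to a \emph{strict} local minimum. Near $x'$, only coordinates in $S$ have $|x_j|\ge\sqrt{\mu}$ and coordinates off $S$ are small; on a neighborhood where $|x_j|>\sqrt{\mu}$ for $j\in S$, the function $\Q_2(\mu\card)(x) = \sum_j \mu - (\max\{\sqrt{\mu}-|x_j|,0\})^2$ equals $\mu K$ (constant) in the $S$-coordinates, so on the subspace of perturbations supported on $S$, $\kfir$ agrees with $\mu K + \|A_S(\cdot)-b\|^2$, which is strictly convex in the $S$-coordinates precisely because $\beta_K>0$ makes $A_S$ injective, so $x'$ is a strict local min there. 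For perturbations that turn on a coordinate $j\notin S$, one uses that $z_j' $ is strictly inside the disc ($|z_j'|\le\|\ep\|<\sqrt{\mu}$) so the directional subderivative is strictly positive; quantitatively, $\Q_2(\mu\card)$ contributes $\sqrt{\mu}|x_j| - O(|x_j|^2)$ in such a direction while the quadratic term perturbs by only $O(|x_j|)\cdot|z_j'| + O(|x_j|^2)$ with coefficient strictly below $\sqrt{\mu}$, so the increase is strictly positive for small perturbations. Assembling the two cases (coordinates on and off $S$ decouple to leading order) gives a strict local minimum. I expect the main obstacle to be this last step: carefully handling the non-smoothness of $\Q_2(\mu\card)$ at the points where coordinates cross $\sqrt{\mu}$ or leave $0$, and making the "on-$S$ and off-$S$ directions decouple" argument rigorous rather than just leading-order — likely this is where the strict inequality hypotheses $\|\ep\|<\sqrt{\mu}$ and $|x_{0,j}|>\sqrt{\mu}+\|\ep\|/\beta_K$ are used with room to spare.
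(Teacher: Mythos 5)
Your proposal is correct and follows essentially the same route as the paper: the oracle-solution identities $A_S^*(Ax'-b)=0$, $\beta_K\|x'-x_0\|\le\|A_S(x'-x_0)\|=\|P_{\mathrm{Ran}\,A_S}\ep\|\le\|\ep\|$, the resulting bounds $|x_j'|>\sqrt{\mu}$ and $\|Ax'-b\|\le\|\ep\|$, and then the expansion of $\kfir(x'+v)$ in which the first-order terms off $S$ (with coefficient $\sqrt{\mu}-|\scal{a_j,Ax'-b}|>0$) force $\supp v\subset S$, after which $\|Av\|^2\ge\beta_K^2\|v\|^2>0$ gives strictness. The paper skips your separate stationarity check and makes your ``decoupling'' rigorous simply by keeping the full nonnegative term $\|Av\|^2$ intact rather than splitting it, but these are presentational differences only.
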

\begin{proof}
All inequalities follow by applying Proposition \ref{p11} with $c=\sqrt{\mu}$, so it remains to prove that $x'$ is a local minimum of $\kfir=\Q_2(\mu\card)+\fro{Ax-b}$. To this end, consider $\kfir(x'+v)$. Since $|x_{j}'|>\sqrt{\mu}$ for $j\in S$, the term $\Q_2(\mu\card)$ (see \eqref{l0000}) is constant for the corresponding indices of $v$, as long as $v$ is small. For $v$ in a neighborhood of 0 we get \begin{equation*}\kfi(x'+v)=\sum_{j\in S^c}\left(2\sqrt{\mu} |v_j|-|v_j|^2\right)+2\re\scal{v,A^*(Ax'-b)}+\|Av\|_2^2+\kfir(x').\end{equation*} Since $x'$ solves the least squares problem posed initially, the vector $A_S^*(Ax'-b)=A_S^*(A_Sx'-b)$ must be 0. With this in mind the above expression simplifies to \begin{equation}\label{rd1}2\left(\sum_{j\in S^c}\sqrt{\mu} |v_j|+\re\left( v_j \scal{a_j,Ax'-b}\right)\right)-\sum_{j\in S^c}|v_j|^2+\|Av\|_2^2+\kfir(x').\end{equation} By the Cauchy-Schwartz inequality and \eqref{sof} we have $$|\scal{a_j,Ax'-b}|\leq \|a_j\|_2\|\ep\|_2< \|A\|_{\infty,col} {\sqrt{\mu}}\leq\sqrt{\mu}.$$ It follows that the term $\sum_{j\in S^c}\sqrt{\mu} |v_j|+\re \left(v_j\scal{a_j,Ax'-b}\right)$ in \eqref{rd1} can be estimated from below by \[  \sum_{j \in S^c}  |v_j|( \underbrace{\sqrt{\mu} -|\langle a_j, A x' - b \rangle|}_{:= \alpha_j}   ) \ge \alpha \sum_{j\in S^c}|v_j|     \] where \( \alpha=\min_j\{\alpha_j\} > 0  \) for all \(j\). Hence \begin{equation}\label{pki}2\left(\sum_{j\in S^c}\sqrt{\mu} |v_j|+\re \left(v_j\scal{a_j,Ax'-b}\right)\right)-\sum_{j\in S^c}|v_j|^2>0\end{equation} for $v$ in a neighborhood of 0, as long as $\sum_{j\in S^c}|v_j|^2\neq 0$. To have $\kfir(x'+v)\leq\kfir(x'),$ \eqref{rd1} shows that we need the terms in \eqref{pki} to be zero, or equivalently $\supp v\subset S$. But then \eqref{rd1} reduces to $\|Av\|_2^2+\kfir(x')$, and since ${\delta_k^-}<1$ it follows that $\|Av\|_2^2>0$ unless $v=0$. In other words, $x'$ is a strict local minimizer.
\end{proof}


In the above proposition, there is nothing said as to whether $x'$ is a global minimum or not. To get further, let $z'$ correspond to $x'$ via \eqref{y}. We need conditions such that \eqref{cond3} holds for $z'$, i.e.
\begin{equation}\label{cond31}|z_{i}'|\not\in\left[{(1-\delta_N^-)}{\sqrt{\mu}},\frac{{\sqrt{\mu}}}{1-\delta_N^-}\right].\end{equation}
We remind the reader that $N$ is a number which preferably is a bit larger than $2k$, where $k$ is the cardinality of $x_0$.

\begin{proposition}\label{p2}
Let $A$ satisfy $\|A\|_{\infty,col}\leq 1$. If $\|\ep\|_2< {(1-\delta_N^-)\sqrt{\mu}}$ and \begin{equation}\label{po7}|x_{0,j}|>\frac{\sqrt{\mu}}{1-\delta_N^-}+\frac{(1-\delta_N^-)\sqrt{\mu}}{\sqrt{1-\delta_k^-}}, \quad j\in S,\end{equation}
then \eqref{cond31} holds.
\end{proposition}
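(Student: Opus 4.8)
The plan is to recycle the structure already established in Proposition \ref{p1}. From that proof we know the oracle solution $x'=x_S$ has the explicit form $x'=x_0+\de$ with $\supp x'=\supp x_0=S$, $|x'_j|>\sqrt{\mu}$ for $j\in S$, $\|Ax'-b\|\leq\|\ep\|$, and in fact $Ax'-b=-P_{(\mathrm{Ran}\,A_S)^\perp}\ep$. I want to show \eqref{cond31}, i.e. that each coordinate $z'_i=((I-A^*A)x'+A^*b)_i$ avoids the interval $[\beta_N^2\sqrt{\mu},\tfrac{1}{\beta_N^2}\sqrt{\mu}]$. Rewrite $z'=(I-A^*A)x'+A^*b = x' + A^*(b-Ax') = x' - A^*(Ax'-b)$, so that $z'_i = x'_i - \scal{a_i, Ax'-b}$.

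The argument then splits into the two coordinate types. First, for $i\in S$: here $z'_i\in\partial g(x'_i)$ by the stationarity relation \eqref{sp}–\eqref{eq:vectorsubgrad} (since $x'$ is a stationary point of $\kfir$ and $z'\in\partial\G(x')$), and since $|x'_i|>\sqrt{\mu}$ the subdifferential formula \eqref{eq:vectorsubgrad} forces $z'_i=x'_i$, hence $|z'_i|=|x'_i|>\sqrt{\mu}\geq\tfrac{1}{\beta_N^2}\sqrt{\mu}$ (using $\beta_N\leq 1$), so $z'_i$ lies strictly to the right of the forbidden interval. Actually, to be safe I should verify directly from \eqref{po7}: $|x'_j|\geq |x_{0,j}|-|\de_j|> \frac{\sqrt{\mu}}{\beta_N^2}+\frac{\beta_N^2\sqrt{\mu}}{\beta_K}-\frac{\|\ep\|}{\beta_K}> \frac{\sqrt{\mu}}{\beta_N^2}$, using $\|\ep\|<\beta_N^2\sqrt{\mu}$ and the bound $\|\de\|\leq\|\ep\|/\beta_K$ from Proposition \ref{p1}. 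Either way $|z'_i|=|x'_i|>\tfrac{1}{\beta_N^2}\sqrt{\mu}$ for $i\in S$.

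Second, for $i\notin S$: here $x'_i=0$, so $z'_i=-\scal{a_i,Ax'-b}$, and by Cauchy–Schwarz together with $\|a_i\|\leq\|A\|_{\infty,col}\leq 1$ and $\|Ax'-b\|\leq\|\ep\|$ we get $|z'_i|\leq\|a_i\|\|\ep\|\leq\|\ep\|<\beta_N^2\sqrt{\mu}$, so $z'_i$ lies strictly to the left of the forbidden interval. Combining the two cases, no coordinate of $z'$ lands in $[\beta_N^2\sqrt{\mu},\tfrac{1}{\beta_N^2}\sqrt{\mu}]$, which is exactly \eqref{cond31}. I do not expect a genuine obstacle here; the only mild care needed is making sure the two conditions in the hypothesis ($\|\ep\|<\beta_N^2\sqrt{\mu}$ controlling the off-support coordinates, and \eqref{po7} controlling the on-support coordinates) each do precisely the job of pushing $z'_i$ out of its side of the interval, and that we invoke Proposition \ref{p1} legitimately — its hypotheses $\|\ep\|<\sqrt{\mu}$ and $|x_{0,j}|>\sqrt{\mu}+\|\ep\|/\beta_K$ follow from the present (stronger) hypotheses since $\beta_N^2\leq 1$ and $\frac{\sqrt{\mu}}{\beta_N^2}\geq\sqrt{\mu}$, so $x'$ is indeed the stationary/local minimizer described there and the identity $z'_i=x'_i$ on $S$ is valid.
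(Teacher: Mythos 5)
Your proof is correct and follows essentially the same route as the paper's: the identity $z'=x'+A^*P_{(\mathrm{Ran}\,A_S)^{\perp}}\ep$ yields $z'_j=x'_j$ on $S$ (which \eqref{po7}, the bound $\|\de\|\leq\|\ep\|/\beta_K$, and $\|\ep\|<\beta_N^2\sqrt{\mu}$ push strictly above $\sqrt{\mu}/\beta_N^2$), while Cauchy--Schwarz handles the coordinates off $S$. One caution: your intermediate inequality $\sqrt{\mu}\geq\frac{1}{\beta_N^2}\sqrt{\mu}$ is reversed whenever $\beta_N<1$, so the "safe" direct verification you then supply is not optional but is the argument actually needed, and it matches the paper's.
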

\begin{proof}
Using \eqref{sof} we get
\begin{align}\label{tg}z'=( I-A^* A)x' + A^* b =  x'-A^*(Ax'-b)= x'+A^*P_{(\text{Ran} A_{S})^{\perp}}\ep.\end{align}
Since $A^*P_{(\text{Ran} A_{S})^{\perp}}$ is 0 on rows with index $j\in S$ (being a scalar product of a vector in $\text{Ran} A_{S}$ and another in its orthogonal complement), we see that $z_{j}'= x_{j}'$ for such $j$. Combining this with the final estimate of Proposition \ref{p11}, we see that  \begin{equation*}\label{po9}|z_{j}'|\geq |x_{0,j}|-|x_{0,j}-x'_j|>\frac{1}{1-\delta_N^-}{\sqrt{\mu}}, \quad j\in S\end{equation*} holds as a consequence of \eqref{po7}. For the remaining $z_{j}'$, (i.e. $j\in S^c$), we have $x_{j}'=0$ so \eqref{tg} implies \begin{equation}\label{o0}\begin{aligned}&|z_j'|=|(A^*P_{(\text{Ran} A_{S})^{\perp}}\ep)_j|=|\scal{P_{(\text{Ran} A_{S})^{\perp}}\ep,a_j}|\leq \\&\|A\|_{\infty,col}\|\ep\|_2\leq \|\ep\|_2<{(1-\delta_N^-)}{\sqrt{\mu}},\end{aligned}\end{equation}
which establishes \eqref{cond31}.
\end{proof}

Putting all the results together and combining with simple estimates, we finally get

\begin{theorem}\label{thm:doctorgadget}
Suppose that $b=Ax_0+\ep$ where $A$ is an $m\times n$-matrix with $\|A\|_{\infty,col}\leq 1$ and set $\card (x_0)=k.$ Let $N\geq 2k$ and assume that
$\|\ep\|_2< {(1-\delta_N^-)\sqrt{\mu}}$ and $$|x_{0,j}|>\para{\frac{1}{1-\delta_N^-}+1}\sqrt{\mu},\quad j\in \supp x_0.$$ Then the oracle solution $x'=x_{or}$ is a unique global minimum to $\kfir$ as well as $\kfi$, with the property that $\supp x'=\supp x_0$, that $$\|x'-x_0\|_2\leq \frac{\|\ep\|_2}{\sqrt{1-\delta_k^-}},$$ and that $\card(x'')> N-k$ for any other stationary point $x''$ of $\kfir$.
\end{theorem}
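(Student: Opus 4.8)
The statement is a packaging of four results already proved: Propositions \ref{p1} and \ref{p2}, Theorem \ref{thm:globalpoint2}, and Theorem \ref{thm:statpoint:vec}. The only work is to check that the present hypotheses, $\|\ep\|<\beta_N^2\sqrt\mu$ and $|x_{0,j}|>(\tfrac{1}{\beta_N^2}+1)\sqrt\mu$ on $\supp x_0$, imply the (formally different) hypotheses of each of those. The elementary facts I would invoke throughout are that $\beta_N\le 1$ (since $\beta_N\le\beta_1\le\|A\|_{\infty,col}\le 1$) and that $\beta_N\le\beta_K$, because $N\ge 2K\ge K$ and $\beta_k$ is non-increasing in $k$; chaining these gives $\beta_N^4\le\beta_N^2\le\beta_N\le\beta_K$, which is the inequality doing all the bookkeeping.

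First I would apply Proposition \ref{p1}. Its hypothesis $\|\ep\|<\sqrt\mu$ follows from $\|\ep\|<\beta_N^2\sqrt\mu\le\sqrt\mu$. Its hypothesis $|x_{0,j}|>\sqrt\mu+\|\ep\|/\beta_K$ follows once we know $(\tfrac{1}{\beta_N^2}+1)\sqrt\mu\ge\sqrt\mu+\|\ep\|/\beta_K$, i.e. $\|\ep\|\le\beta_K\sqrt\mu/\beta_N^2$, which holds since $\|\ep\|<\beta_N^2\sqrt\mu$ and $\beta_N^4\le\beta_K$. Thus $x'=x_S$ is a strict local minimizer of $\kfir$ with $\supp x'=\supp x_0$ (hence $\card x'=K$), $\|Ax'-b\|\le\|\ep\|$, $|x_j'|>\sqrt\mu$ for $j\in S$, and $\|x'-x_0\|\le\|\ep\|/\beta_K$; the last of these is already the norm estimate in the theorem. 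Being a local minimizer, $x'$ is in particular a stationary point of $\kfir$, so the associated $z'$ from \eqref{y} is available.

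Next I would apply Proposition \ref{p2} to this $x'$: its hypothesis $\|\ep\|<\beta_N^2\sqrt\mu$ is given, and the required bound $(\tfrac{1}{\beta_N^2}+1)\sqrt\mu\ge\tfrac{\sqrt\mu}{\beta_N^2}+\tfrac{\beta_N^2\sqrt\mu}{\beta_K}$ reduces to $\beta_N^2\le\beta_K$, which holds. Hence \eqref{cond31}, i.e. condition \eqref{cond3}, holds for $z'$; this is simultaneously the hypothesis \eqref{e4} of Theorem \ref{thm:statpoint:vec}. Consequently any stationary point $x''\neq x'$ of $\kfir$ satisfies $\card(x''-x')>N$, and since $\card(x''-x')\le\card x''+\card x'=\card x''+K$ we get $\card x''>N-K$, which is the last assertion.

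Finally, for global minimality I would invoke Theorem \ref{thm:globalpoint2} with this $N$. Condition \eqref{cond3} was just verified; condition \eqref{cond2} reads $2\mu\card x'+\|Ax'-b\|^2<\mu N+\mu$, and using $\card x'=K$, $\|Ax'-b\|^2\le\|\ep\|^2<\beta_N^4\mu\le\mu$, and $2K\le N$, we obtain $2\mu K+\|Ax'-b\|^2<2\mu K+\mu\le\mu N+\mu$. Theorem \ref{thm:globalpoint2} then gives that $x'$ is the unique global minimum of $\kfir$ and of $\kfi$, completing the proof. I do not expect a genuine obstacle: the one thing to watch is that the inequalities cited as strict by the earlier results really are strict, which is guaranteed by the strict assumption $\|\ep\|<\beta_N^2\sqrt\mu$, and that the $\beta$-monotonicity is applied with the correct indices ($N\ge K$).
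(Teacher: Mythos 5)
Your proposal is correct and follows essentially the same route as the paper's own proof: both reduce the theorem to verifying the hypotheses of Proposition \ref{p1}, Proposition \ref{p2}, Theorem \ref{thm:statpoint:vec} and Theorem \ref{thm:globalpoint2}, using the same monotonicity chain $\beta_N\leq\beta_K\leq\|A\|_{\infty,col}\leq 1$ and the same arithmetic for conditions \eqref{cond31} and \eqref{cond2}. The only (cosmetic) difference is the exact bookkeeping used to check $|x_{0,j}|>\sqrt{\mu}+\|\ep\|/\beta_K$, and your explicit derivation of $\card(x'')>N-K$ from $\card(x''-x')>N$, which the paper leaves implicit.
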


\begin{proof}
All the statements follow by Theorem \ref{thm:statpoint:vec}, Theorem \ref{thm:globalpoint2} and Proposition \ref{p1}, so we just need to check that these apply. Note that $\sqrt{1-\delta_{N}^-}\leq \sqrt{1-\delta_k^-}\leq \|A\|_{\infty,col}\leq 1$ which will be used repeatedly.

We begin to verify that Proposition \ref{p1} applies, which is easy by noting that $\|\ep\|_2\leq (1-\delta_N^-)\sqrt{\mu}<\sqrt{\mu}$ and $${\sqrt{\mu}}+\frac{\|\epsilon\|_2}{\sqrt{1-\delta_k^-}}\leq \frac{\sqrt{\mu}}{1-\delta_N^-}+\frac{(1-\delta_N^-)\sqrt{\mu}}{\sqrt{1-\delta_k^-}}\leq \frac{\sqrt{\mu}}{1-\delta_N^-}+{\sqrt{\mu}}<|x_{0,j}|.$$

Now, to verify that Theorem \ref{thm:statpoint:vec} applies we need to check the condition \eqref{cond31}, which follows if we show that Proposition \ref{p2} applies. This is almost immediate since the estimate on $\|\ep\|_2$ is satisfied by assumption and \eqref{po7} follows by noting that $\frac{1-\delta_N^-}{\sqrt{1-\delta_k^-}}\leq 1.$ By this we also get the first condition of Theorem \ref{thm:globalpoint2} for free. We are done once we also verify \eqref{cond2}. To this end, note that $\|Ax'-b\|_2\leq \|\ep\|_2< (1-\delta_N^-)\sqrt{\mu}$ by Proposition \ref{p1}, so \eqref{cond2} holds if $2\mu k+(1-\delta_N^-)^2{\mu}\leq \mu N+\mu$, which is clearly the case since $N\geq 2k$.

\end{proof}

As a final remark, a simpler statement is found by setting $N=2k$, which gives the loosest conditions to verify. We spelled this out in Corollary \ref{cor:doctorgadget}, where we also simplified further by replacing $\frac{1}{1-\delta_N^-}+1$ by $\frac{2}{1-\delta_N^-}$, for aesthetic reasons.

\section{Known model order; the $k$-sparsity problem}\label{seccardfix}

Let $P_k=\{x:\card (x)\leq k\}$ where $x$ is a vector in $\C^n$ or $\R^n$. Set $f(x)=\iota_{P_k}(x)$ and note that the problem \begin{equation}\label{agt1}\argmin_{\card(x)\leq k}\|Ax-b\|_2\end{equation} is equivalent to finding the minimum of \begin{equation}\label{agt2}\ksi(x)=\iota_{P_k}(x)+\|Ax-b\|_2^2,\end{equation}
(where we put a subindex $k$ to distinguish from $\kfi$ in the previous section).\footnote{Admittedly, the notation is not perfect since if $k$ and $\mu$ equal the same integer, then the two symbols become the same, but we hope the reader can live with this.} Again, we will approach this problem by using
$$\ksir(x)=\Q_2(\iota_{P_k})(x)+\|Ax-b\|_2^2.$$
This is in some ways much simpler than the situation in the previous sections, for example all local minimizers of $\ksi$ are clearly in $P_k$.
On the other hand, $\Q_2(\iota_{P_k})$ turns out to be rather complicated. We recapitulate the essentials, which follows by adapting the computations in \cite{andersson-etal-ol-2017} (for matrices) to the vector setting. Define $\tilde x$ to be the vector $x$ resorted so that $(|\tilde x_j|)_{j=1}^d$ is a decreasing sequence. Then
\begin{equation}\label{lust1}\Q_2(\iota_{P_k})(x)= \frac{1}{k_*}\para{\sum_{j>k-k_*}|\tilde{x}_j|}^2-\sum_{j>k-k_*}|\tilde x_j|^2\end{equation} where $k_*$ is the largest value of \(l \in \{1,\dots,k\} \) for which the non-increasing sequence
\begin{equation}\label{lust}s(l)=\left(\sum_{j>k-l}|\tilde x_j|\right)-l|\tilde x_{k+1-l}|\end{equation}
is non-negative (note that it clearly is non-negative for $l=1$). For any given vector $x$ this is clearly computable, although one has to go through a number of cases, but the good thing is that there is an efficient way to implement the corresponding proximal operator (discussed in Section \ref{int tch}) so in practice this is of little importance. Although it is not very clear from the above expression, $\Q_2(\iota_{P_k})$ is known to be continuous (see e.g.~Proposition 3.2 in \cite{carlsson2018convex}), and this will be used without comment below. We first show that the global minima of $\ksir$ and $\ksi$ coincide.

\subsection{Equality of minimizers for $\ksi$ and $\ksir$}\label{Kfeas}

As before $A$ is a matrix of size $m\times n$, which we need to impose some additional conditions on. The theory in the entire Section \ref{seccardfix} assumes that
\begin{itemize}
\item [(A1)] $n\geq m+k+2$ (when working over the reals) whereas $n\geq 2m+k+2$ when working in $\C^n$.
\item [(A2)] Either $\|A\|_{\infty,col}< 1$ \textit{or}\newline $\|A\|_{\infty,col}\leq 1$ and all possible scalar products $\scal{a_i,a_j}$ are non-zero.
\end{itemize}
The equivalent of Theorem \ref{celok5} now reads.

\begin{theorem}\label{celok6}
Under assumption (A1-A2) all local minimzers of $\ksir$ lie in $P_k$ (and hence are minimizers to $\ksi$). In particular the global minimizers exist and coincide.
\end{theorem}

We note that the conclusion is the same as that of Theorem 5.1 in \cite{carlsson2018convex}, which holds for almost any penalty $f$. However, that proof assumes that $\|A\|<1$ which is unnecessarily strong in the present setting. For example it would rule out all Gaussian random matrices with normalized columns. The proof of Theorem \ref{celok6} is given in Appendix \ref{kfesappendix}.

\subsection{On the uniqueness of sparse stationary points}

We now give a condition, similar to \eqref{e4} in Section \ref{erai}, to ensure that a sparse stationary point is unique, in the sense that other stationary points must have higher cardinality.

\begin{theorem}\label{thm:statpoint:vecfix}
Let $x'$ be a stationary point of $\ksir$ with cardinality $k$, let $z'$ be given by \eqref{y}, and assume that
\begin{equation}\label{e4fix}|\tilde z'_{k+1}|<(1-2\delta_{2k}^-)|\tilde z'_{k}|.\end{equation}
If $x''$ is another stationary point of $\ksir$ then $\card(x'') > k$.
\end{theorem}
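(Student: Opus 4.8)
Argue by contradiction: suppose $\card(x'')\le K$. Since $\card(x')=K$, this gives $x''-x'\in P_{2K}$, so Proposition~\ref{thm:statpoint} (with $2K$ in place of $K$) yields the upper bound $\re\scal{z''-z',x''-x'}\le(1-\beta_{2K}^2)\fro{x''-x'}$. The whole task is to prove the opposite \emph{strict} inequality, $\re\scal{z''-z',x''-x'}>(1-\beta_{2K}^2)\fro{x''-x'}$, which is the desired contradiction. To this end recall that $x$ is a stationary point of $\ksir$ exactly when $x\in\argmin_y \Q_2(\iota_{P_K})(y)+\fro{y-z}$ for $z=(I-A^*A)x+A^*b$ (Section~\ref{sec:unique}); write $h_z(y):=\Q_2(\iota_{P_K})(y)+\fro{y-z}$, so $x'$ minimizes $h_{z'}$ and $x''$ minimizes $h_{z''}$. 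Since $\card(x'),\card(x'')\le K$, the function $\Q_2(\iota_{P_K})$ vanishes at both points, and expanding squares gives the identity $2\re\scal{z''-z',x''-x'}=\big(h_{z'}(x'')-h_{z'}(x')\big)+\big(h_{z''}(x')-h_{z''}(x'')\big)$. The last bracket is $\ge 0$ because $x''$ minimizes $h_{z''}$; hence it suffices to establish the quantitative growth bound
$$h_{z'}(x)-h_{z'}(x') \;>\; 2(1-\beta_{2K}^2)\fro{x-x'}\qquad\text{for every }x\in P_K\text{ with }x\ne x',$$
and then apply it with $x=x''$.

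To prove the growth bound I would first record the structure of the stationary point $x'$. Minimizing $h_{z'}$ over the subspace of vectors supported on $S:=\supp x'$ forces $x'$ and $z'$ to agree on $S$; comparing the value $h_{z'}(x')$ with the value at the best $K$-term truncation of $z'$ forces $S$ to be a set of $K$ indices of largest magnitude of $z'$. Consequently $\min_{j\in S}|z'_j|=|\tilde z'_K|=:\rho$ and $\max_{j\notin S}|z'_j|\le|\tilde z'_{K+1}|=:\eta$, and \eqref{e4fix} says $\eta<(2\beta_{2K}^2-1)\rho$, which in particular forces $\beta_{2K}^2>\tfrac12$.

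Now fix $x\in P_K$ with $x\ne x'$, set $v=x-x'$, and split $\supp v$ into the ``new'' coordinates $P:=\supp v\setminus S$, of size $p$, and the ``annihilated'' coordinates $Z:=\{j\in S:x_j=0\}$, of size $q$; since $\card(x)\le K$ one has $p\le q$. Using that $x'$ and $z'$ agree on $S$, a direct expansion gives
$$h_{z'}(x)-h_{z'}(x')=\fro{v}-2\sum_{j\in P}\re\big(\overline{v_j}\,z'_j\big)\;\ge\;\fro{v}-2\eta\sum_{j\in P}|v_j|,$$
so it remains to show $(2\beta_{2K}^2-1)\fro{v}>2\eta\sum_{j\in P}|v_j|$. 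When $p=0$ this is immediate since $v\ne0$ and $\beta_{2K}^2>\tfrac12$. When $p\ge1$, then $q\ge1$ and each annihilated coordinate satisfies $|x'_j|\ge\rho$, so $\fro{v}\ge q\rho^2+a\ge p\rho^2+a$ with $a:=\sum_{j\in P}|v_j|^2>0$; combining this with Cauchy--Schwarz $\sum_{j\in P}|v_j|\le\sqrt{pa}$ and the elementary bound $p\rho^2+a\ge 2\rho\sqrt{pa}$ yields $(2\beta_{2K}^2-1)\fro{v}\ge 2(2\beta_{2K}^2-1)\rho\sqrt{pa}>2\eta\sqrt{pa}\ge 2\eta\sum_{j\in P}|v_j|$, using \eqref{e4fix} for the strict step.

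The step I expect to be the real obstacle is this last estimate. Because $\Q_2(\iota_{P_K})$ does not split over coordinates, the one-dimensional argument of Section~\ref{erai} is not available, and a crude coordinate-by-coordinate bound would only give the weaker requirement $2\eta<(2\beta_{2K}^2-1)\rho$. The point that makes the sharp constant work is the combinatorial fact that creating $p$ off-support entries in $x$ necessarily destroys at least $p$ entries of $x'$, each of magnitude $\ge\rho$, so that $\fro{v}$ automatically carries a $p\rho^2$ term which the AM--GM step can trade against $\sum_{j\in P}|v_j|$ with exactly the constant furnished by \eqref{e4fix}. A secondary technical nuisance is keeping track of ties in the sorted sequence $\tilde z'$: if $|\tilde z'_{K+1}|=|\tilde z'_K|$ then \eqref{e4fix} is satisfiable only when $\beta_{2K}>1$, and the argument above is written so as to remain valid in that degenerate range as well.
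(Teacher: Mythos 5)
Your argument is correct, but it takes a genuinely different route from the paper's. The paper first computes $\partial\G$ on $P_K$ explicitly via a Fenchel-conjugate calculation (Lemma \ref{sun}), then expands $\re\scal{z''-z',x''-x'}$ into three sums indexed by $\supp x'$ and $\supp x''$, pairs the two off-diagonal sums term by term, and reduces everything to the two-term inequality $|z_i''||x_i'|+|z_j'||x_j''|<\beta_{2K}^2\bigl(|x_i'|^2+|x_j''|^2\bigr)$, settled by AM--GM. You instead symmetrize: writing $2\re\scal{z''-z',x''-x'}$ as the sum of the excess brackets $h_{z'}(x'')-h_{z'}(x')$ and $h_{z''}(x')-h_{z''}(x'')$, you discard the second by the global optimality of $x''$ for $h_{z''}$ and put all the quantitative work into a growth bound for $h_{z'}$ on $P_K$ around $x'$, derived from the fact that $\supp x'$ must be a top-$K$ index set of $z'$ on which $x'$ and $z'$ agree. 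Your version buys several things: you never need the explicit subdifferential formula nor any structural information about $z''$; and your counting step $p\le q$ cleanly handles the case $\card(x'')<K$, where the paper's claim that ``the second and third sums have the same number of terms'' only holds verbatim when $\card(x'')=K$ and otherwise needs a small extra argument for the unpaired terms. The paper's version, in exchange, is a purely coordinatewise computation whose two-term inequality mirrors the scalar lemmas of Section \ref{erai}, keeping the two sparsity settings structurally parallel. Both proofs ultimately hinge on the same AM--GM trade, with the constant $2\beta_{2K}^2-1$ supplied by \eqref{e4fix} in exactly the same place.
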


Again, we allow $\delta_{2k}^-<0$ in the above theorem, in which case the condition on $z$ is automatically satisfied. We begin with a lemma. Recall $\G$ given by \eqref{gdef}, i.e.~$\frac{1}{2}\Q_2(\iota_{P_k})(x)+\frac{1}{2}\fro{x}$ in the present case. We need an expression for $\partial\G(x)$ for $x\in P_k$.

\begin{lemma}\label{sun}
If $x\in P_k$ then $z\in\partial\G(x)$ if and only if $ z_j= x_j$ for $j \in \supp x$ and $ z_j\in |\tilde x_k|\D $ for all other $j$.
\end{lemma}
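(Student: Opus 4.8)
The plan is to reduce the statement to a description of the Euclidean projection onto $P_K$, and then to read off the stated formula. First, recall that $2\G$ is by definition the lower semicontinuous convex envelope of $\iota_{P_K}(\cdot)+\fro{\cdot}$; in particular $\G$ is convex, so $z\in\partial\G(x)$ holds if and only if $x$ minimizes $y\mapsto 2\G(y)-2\re\scal{z,y}$, equivalently
\[
x\in\argmin_y\ \Q_2(\iota_{P_K})(y)+\fro{y-z}
\]
(this is exactly the reformulation already used in Section~\ref{sec:unique}, now with no matrix $A$ present). Since adding an affine function commutes with passing to the convex envelope, $\Q_2(\iota_{P_K})(y)+\fro{y-z}$ is the convex envelope of $\iota_{P_K}(y)+\fro{y-z}$, and as convex envelopes preserve infima, its minimal value equals $\inf_{y\in P_K}\fro{y-z}=\operatorname{dist}(z,P_K)^2$.

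Second, I would observe that $\Q_2(\iota_{P_K})$ vanishes on $P_K$: the convex function $\fro{\cdot}$ is a minorant of $\iota_{P_K}(\cdot)+\fro{\cdot}$ which coincides with it on $P_K$, hence equals its convex envelope there, i.e. $\Q_2(\iota_{P_K})(x)=0$ for $x\in P_K$. Consequently, for $x\in P_K$ the functional $\Q_2(\iota_{P_K})(y)+\fro{y-z}$ takes the value $\fro{x-z}$ at $y=x$, so by the previous step $z\in\partial\G(x)$ if and only if $\fro{x-z}=\operatorname{dist}(z,P_K)^2$, that is, if and only if $x$ is a Euclidean projection of $z$ onto $P_K$.

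It then remains to make explicit what it means for $x\in P_K$ to be such a projection (hard thresholding), and to match it with the claimed condition. For any $y\in P_K$ with support $T$,
\[
\fro{y-z}=\sum_{i\in T}|y_i-z_i|^2+\sum_{i\notin T}|z_i|^2\ \geq\ \fro{z}-\sum_{i\in T}|z_i|^2\ \geq\ \fro{z}-\sum_{j=1}^{K}|\tilde z_j|^2=\operatorname{dist}(z,P_K)^2 ,
\]
with equality in the first inequality exactly when $y_i=z_i$ on $T$, and in the second exactly when the multiset $\{|z_i|:i\in T\}$, padded with $K-\#T$ zeros, is the multiset of the $K$ largest magnitudes of $z$ (so if $\#T<K$ this forces $|\tilde z_j|=0$ for $\#T<j\le K$). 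After sorting $x$ by decreasing magnitude and sorting $z$ by a compatible permutation, these two equality conditions translate into $\tilde x_j=\tilde z_j$ for $1\le j\le K$ together with $|\tilde z_j|\le|\tilde x_K|$ for $j>K$, which is $\tilde z_j\in|\tilde x_K|\D$, as claimed.

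Everything up to here is routine convex analysis. The main obstacle is the last translation: one must argue that $x$ and $z$ may be sorted by a common permutation (ties among the magnitudes of $z$ make the decreasing rearrangement non-unique, and one has to pick the one adapted to a top-$K$ support containing $\supp x$), and one must treat separately the degenerate case $\#\supp x<K$, where $\tilde x_K=0$ and the stated condition collapses to $\tilde z_j=0$ for all $j>\#\supp x$.
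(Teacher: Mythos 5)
Your argument is correct, and it reaches the lemma by a genuinely different route than the paper. The paper's proof computes the Fenchel conjugate explicitly, $\G^*(y)=\frac{1}{2}\sum_{j=1}^K |\tilde y_j|^2$, invokes the inversion rule $z\in\partial\G(x)\Leftrightarrow x\in\partial\G^*(z)$, and then uses a rearrangement inequality to reduce the resulting maximization to $\argmax_z\,-\sum_{j=1}^K|\tilde x_j-\tilde z_j|^2$. You instead stay on the primal side: you characterize $z\in\partial\G(x)$ as $x\in\argmin_y \Q_2(\iota_{P_K})(y)+\fro{y-z}$ and combine two structural facts --- that the quadratic envelope preserves the infimum of $\iota_{P_K}(y)+\fro{y-z}$, and that $\Q_2(\iota_{P_K})$ vanishes on $P_K$ (both correctly justified) --- to conclude that for $x\in P_K$ membership in $\partial\G(x)$ is equivalent to $x$ being a Euclidean projection of $z$ onto $P_K$, i.e.\ a hard thresholding of $z$. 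What this buys is that you never need $\G^*$ in closed form, and the geometric content of the lemma (the inverse of $\partial\G$ restricted to $P_K$ is the hard-thresholding map) is made explicit; it also reuses envelope properties already cited elsewhere in the paper. The cost is the same in both proofs and sits in the final translation: handling ties in the decreasing rearrangement (one must choose the sorting of $z$ compatibly with that of $x$, exactly as the paper does) and the degenerate case $\card(x)<K$, where $|\tilde x_K|=0$ forces $\tilde z_j=0$ for $j>\card(x)$; you flag and resolve both points correctly, so there is no gap.
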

\begin{proof}
Since $\Q_2(\iota_{P_k})+\fro{x}$ is the l.s.c. convex envelope of $\iota_{P_k}+\fro{x}$, we have that $\G(x)=\frac{1}{2}\Q_2(\iota_{P_k})+\frac{1}{2}\fro{x}$ is the double Fenchel conjugate of $\frac{1}{2}\iota_{P_k}+\frac{1}{2}\fro{x}$. The Fenchel conjugate of the latter is easily computed to $$\G^*(y)=\frac{1}{2}\sum_{j=1}^k |\tilde y_j|^2.$$ By the well-known identity $z\in\partial \G(x)\Leftrightarrow x\in\partial \G^*(z)$ (see e.g. Proposition 16.9 in \cite{bauschke2017convex}) we have $z\in\partial\G(x)$ if and only if
$$
\G^*(w) \geq \G^*(z) + \scal{x,w-z},
$$
for all $w$ which means that
\begin{equation}\label{t6}z=\argmax_z \re\scal{x,z}-\frac{1}{2}\sum_{j=1}^k |\tilde z_j|^2.\end{equation}
By standard results on reordering of sequences (see e.g.~Ch.~1 in \cite{simon2005trace}), the maximum is attained for a $z$ which is ordered in the same way as $x$. In other words we can choose a permutation $\pi$ such that $|x(\pi(j))|=|\tilde x_j|$ and $|z(\pi(j))|=|\tilde z_j|$ holds for all $j$. This in turn implies that $\scal{x,z}=\sum_{j=1}^n x(\pi(j))\overline{z(\pi(j))}$. Combined with ${x}(\pi(j))=0$ for $j>k$, we see that \eqref{t6} turns into \begin{equation}\label{t7}z=\frac{1}{2}\argmax_z-\sum_{j=1}^k | x_j(\pi(j))- z_j(\pi(j))|^2.\end{equation} The lemma now easily follows.
\end{proof}

\begin{proof}[Proof of Theorem \ref{thm:statpoint:vecfix}.]
If $\card (x'')\leq k$ we clearly have $x''-x'\in P_{2k}$ and both $z'$ and $z''$ have the structure stipulated in Lemma \ref{sun}. Let $I'=\supp x'$ and $I''=\supp x''$. Then $\re\scal{z''-z',x''-x'}$ can be written
\begin{equation}
\re\left(\sum_{\footnotesize
\begin{array}{c}
i \in I' \\
i \in I''\end{array}}
|x_i''-x_i'|^2 +
\sum_{\footnotesize
\begin{array}{c}
i \in I' \\
i \notin I''\end{array}}
(x_i'-z_i'')\overline{x_i'} +
\sum_{\footnotesize
\begin{array}{c}
i \notin I' \\
i \in I''\end{array}}
(x_i''-z_i')\overline{x_i''}\right).
\label{eq:sum1}
\end{equation}
As before we want to reach a contradiction to Proposition \ref{thm:statpoint}, i.e.~we want to prove $\re\scal{z''-z',x''-x'}>\delta_{2k}^-\|x''-x'\|_2^2$. Note that
\begin{equation}\label{gr2}
\|x''-x'\|_2^2=\sum_{\footnotesize
\begin{array}{c}
i \in I' \\
i \in I''\end{array}}
|x_i''-x_i'|^2 +
\sum_{\footnotesize
\begin{array}{c}
i \in I' \\
i \notin I''\end{array}}
|x_i'|^2 +
\sum_{\footnotesize
\begin{array}{c}
i \notin I' \\
i \in I''\end{array}}
|x_{i}''|^{2},
\end{equation}
that the first term in \eqref{eq:sum1} and \eqref{gr2} are the same, and that $\delta_{2k}^-<1$. Since the second and third sums have the same number of terms it suffices to show that
\begin{equation}
\re (x_i'-z_i'')\overline{x_i'} + (x_j''-z_j')\overline{x_j''} > \delta_{2k}^-(|x_i'|^2 + |x_j''|^{ 2}),
\label{eq:twoterms}
\end{equation}
for any pair $i\in I'$, $i\notin I''$ and $j \notin I'$, $j \in I''$. This in turn will follow upon showing that
$$z_i''\overline{x_i'}+z_j'\overline{x_j''} \leq |z_i''| |x_i'| + |z_j'| |x_j''| < (1-\delta_{2k}^-)(|x_i'|^2 + |x_j''|^{ 2}).$$
Since $i\notin I''$ and $j\in I''$ we have $|z_i''|\leq |z_j''|$ by Lemma \ref{sun}, as well as that $z_j''=x_j''$. Turning to $z_j'$ we can say more due to assumption \eqref{e4fix}. More precisely, since $i\in I'$ and $j\notin I'$ we have $|z_j'|< (1-2\delta_{2k}^-)| z'_{i}|=(1-2\delta_{2k}^-)| x'_{i}|$, where again Lemma \ref{sun} was used in the last identity. Summing up we have \begin{equation*}\begin{split}|z_i''| |x_i'| + |z_j'| |x_j''|  <  |x_j''||x_i'| + (1-2\delta_{2k}^-)| x'_{i}| |x_j''| & = 2(1-\delta_{2k}^-)| x'_{i}| |x_j''| \\ & \leq (1-\delta_{2k}^-)(|x_i'|^2 + |x_j''|^{ 2}),\end{split}\end{equation*}
as desired.
\end{proof}

\subsection{Conditions on global minimality}
The statements in this section are actually quite a bit stronger than the corresponding ones in Section  \ref{erai1}. On the other hand, the condition \eqref{e4fix} entails that we must have $\delta_{2k}^-<1/2$, which limits the applicability.

\begin{theorem}\label{thm:globalpoint2f}
Let $A$ satisfy (A1-A2) and let $x'\in P_k$ be a stationary point of $\ksir$. Let $z'$ be given by \eqref{y} and assume that \eqref{e4fix} applies. Then $x'$ is a unique global minimizer of $\ksi$ and $\ksir$, and $\ksir$ has no other local minimizers either.
\end{theorem}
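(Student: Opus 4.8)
The plan is to derive the statement from three facts already established: Theorem~\ref{celok6} (for $K$-feasible $A$ the global minimizers of $\ksi$ and $\ksir$ coincide and all lie in $P_K$, and for strictly $K$-feasible $A$ every local minimizer of $\ksir$ also lies in $P_K$), Theorem~\ref{thm:statpoint:vecfix} (if $x'$ is a stationary point of $\ksir$ of cardinality $K$ satisfying \eqref{e4fix}, every other stationary point has cardinality $>K$), and Lemma~\ref{sun} (the description of $\partial\G$ on $P_K$). The structure follows that of the proof of Theorem~\ref{thm:globalpoint2}, and is in fact shorter because Theorem~\ref{celok6} already confines the global minimizers to $P_K$.

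First I would check that the hypotheses of Theorem~\ref{thm:statpoint:vecfix} really apply to $x'$, i.e.\ that $\card(x')=K$. Since $x'$ is stationary we have $z'\in\partial\G(x')$, so Lemma~\ref{sun} gives $|\tilde z'_K|=|\tilde x'_K|$; if this were zero, then $|\tilde z'_{K+1}|=0$ as well and \eqref{e4fix} would read $0<(2\beta_{2K}^2-1)\cdot 0$, which is impossible. Hence $|\tilde x'_K|\neq 0$, and since $x'\in P_K$ this forces $\card(x')=K$. (Incidentally this also shows that \eqref{e4fix} implicitly requires $\beta_{2K}>1/\sqrt2$.)

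For global minimality: by Theorem~\ref{celok6}, $\ksir$ has a nonempty set of global minimizers, each lying in $P_K$, and any such point is in particular a stationary point of $\ksir$. If $x''$ is a global minimizer with $x''\neq x'$, then Theorem~\ref{thm:statpoint:vecfix} yields $\card(x'')>K$, contradicting $x''\in P_K$. Thus $x'$ is the unique global minimizer of $\ksir$, and since by Theorem~\ref{celok6} the global minimizers of $\ksi$ and $\ksir$ coincide, $x'$ is also the unique global minimizer of $\ksi$. For the last claim, assume $A$ strictly $K$-feasible: any local minimizer of $\ksir$ is a stationary point of $\ksir$ and, by Theorem~\ref{celok6}, lies in $P_K$; the same argument then rules out any local minimizer other than $x'$.

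I do not anticipate a genuine obstacle here, since the difficult work is contained in Theorems~\ref{celok6} and \ref{thm:statpoint:vecfix}. The one point that must not be overlooked is the preliminary reduction showing that \eqref{e4fix} entails $\card(x')=K$ --- without it Theorem~\ref{thm:statpoint:vecfix} could not be invoked directly --- after which the result is a short chain of implications.
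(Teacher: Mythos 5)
Your proof is correct and follows essentially the same route as the paper's: invoke Theorem~\ref{celok6} to produce a global minimizer in $P_K$ (and, in the strict case, to confine all local minimizers to $P_K$), then use Theorem~\ref{thm:statpoint:vecfix} to rule out any stationary point other than $x'$. Your preliminary observation that \eqref{e4fix} together with Lemma~\ref{sun} forces $\card(x')=K$ is a welcome extra step: the paper's proof applies Theorem~\ref{thm:statpoint:vecfix} without explicitly checking its cardinality hypothesis, and your reduction closes that small gap.
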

\begin{proof}
By Theorem \ref{celok6} there exists $x''\in P_k$ which is a global minimizer for both $\ksi$ and $\ksir$. Clearly $x''$ is then a stationary point, so if $x'\neq x''$ this would contradict Theorem \ref{thm:statpoint:vecfix}, so we must have $x'=x''$. The same argument works for the local minimizers.
\end{proof}

\subsection{Finding the oracle solution}
We now assume that $b$ is of the form $Ax_0+\ep$ where $\ep$ is noise and $x_0$ is sparse. More precisely we set $S=\supp x_0$ where we assume that $\#S=k$.
As before let $A_S$ denote the matrix obtained from $A$ by setting columns outside of $S$ to $0$. 


In this case, Theorem \ref{celok6} is strong enough so that we do not need any longer argument to establish that $x_{or}$ is a global minimizer, since all local minimizers of $\ksir$ are to be found in $P_k$. We obtain the following result.

\begin{theorem}\label{cor:dogadgetf}
Let $A$ satisfy (A1-A2). If $\ep\neq 0$ and $$\min_{j\in S}|x_{0,j}|>\para{\frac{\|\ep\|_2}{\sqrt{1-\delta_{k}^-}}+\frac{2\|\ep\|_2}{\sqrt{1-\delta_{2k}^-}}},$$ then the estimates of Proposition \ref{p11} applies and the oracle solution is a global minimum of $\ksi$ and $\ksir$.
\end{theorem}

\begin{proof}
Proposition \ref{p11} immediately applies with $c=\frac{2\|\ep\|_2}{\sqrt{1-\delta_k^-}}$.
Let $J\subset \{1,\ldots,n\}$ have cardinality $k$ and consider the problem $$x_J=\argmin_x \|A_Jx-b\|^2.$$ Searching over $J$ gives rise to (at most) $\binom{n}{k}$ points (since $\delta_k^-<1$), among which the minimizers of $\ksi$ are found (see Lemma \ref{l4} for more details). By Theorem \ref{celok6} a subset of these are the local minimizers of $\ksir$, and the global minimizer must be the one that gives the lowest value for $\|A_Jx-b\|^2$. With this notation we have $x_{or}=x_S$ and the estimates of Proposition \ref{p11} gives $\|Ax_S-b\|_2\leq \|\ep\|_2$ and $$|x_{S,j}|>{\frac{2\|\ep\|_2}{\sqrt{1-\delta_k^-}}},\quad j\in S.$$
If $J\neq S$ is another set with cardinality $k$ then $x_J$ and $x_S$ must differ in at least one coordinate, so
$$\|x_{S}-x_J\|_2>{\frac{2\|\ep\|_2}{\sqrt{1-\delta_{2k}^-}}},\quad j\in S.$$
But then $$\|Ax_J-b\|_2=\|A(x_J-x_S)+Ax_S-b\|_2\geq\sqrt{1-\delta_{2k}^-}\|x_S-x_J\|_2-\|\ep\|_2>\|\ep\|_2.$$
Thus $x_S$ is the one with the lowest value for $\|Ax_J-b\|_2$, which was to be shown.
\end{proof}

When minimizing ${\ksir}$ in practice, it would of course be good to know if there are local minima where one can get stuck. To rule out this possibility, we need unfortunately to assume that $\delta_{2k}^-<1/2$.

\begin{corollary}\label{new}
If in addition to what is assumed in Theorem \ref{cor:dogadgetf} we have $$\min_{j\in S}|x_{0,j}|>\para{\frac{1}{1-2\delta_{2k}^-}+\frac{1}{\sqrt{1-\delta_k^-}}}\|\ep\|_2, $$ then the there are no local minimizers of $\ksir$ except the oracle solution.
\end{corollary}
\begin{proof}
Theorem \ref{cor:dogadgetf} clearly ensures that $x'=x_{or}$ is a stationary point. The desired result follows from Theorem \ref{thm:globalpoint2f} once we verify that \eqref{e4fix} applies for $z'$ given by \eqref{y}. We need to check that $|\tilde z'_{k+1}|<(1-2\delta_{2k}^-)|\tilde z'_k|$. Note that $|\tilde z'_{k+1}|\leq\|\ep\|_2$ by the same estimate as \eqref{o0}. Moreover, since $z'\in\partial \G(x')$, Lemma \ref{sun} implies that $|\tilde z'_k|=|\tilde x'_k|$ so it suffices to show that $\|\ep\|_2<(1-2\delta_{2k}^-)|\tilde x'_k|$. This in turn holds by applying Proposition \ref{p11} with $c=\frac{\|\ep\|^2}{1-2\delta_{2k}^-}$, and the proof is complete.

\end{proof}

 \section{Experimental Evaluation}\label{sec:num}

In this section we present an experiment designed to validate our main theoretical results.
Our goal is to verify that both the proposed methods are able to recover the oracle solution when the signal to noise level is sufficiently large.
For both our formulation we need to specify a parameter; $\mu$ in case of $\K_{\mu,reg}$ and $k$ for $\K_{k,reg}$.
Since we are working with synthetic data generated by $b = A x_0 + \epsilon$, with a known vector $x_0$ we can set $\mu$ so that the non-zero elements are large enough to be preserved, and $k$ so that $k=\supp(x_0)$
(see Section~\ref{sec:num2} for a more detailed description).

In realistic settings where $x_0$ is unknown selecting parameters is more difficult and requires a precise definition of what constitutes a good solution. This could be based on application specific prior information about the support or size of the elements. If the size of the correct support is assumed to be known, then $\K_{k,reg}$ is the convenient choice. On the other hand formulations able to directly specify the sought cardinality are uncommon. Therefore soft penalties such as $\lambda\|\cdot\|_1$ or $\Q_{2}(\mu\card)$ are often utilized by searching over the parameter until a suitable cardinality solution is found.
The $\ell_1$-norm has been used in this way for a number of practical applications e.g. face recognition \cite{wright-etal-pami-2009}, subspace clustering \cite{elhamifar-vidal-pami-2013}, non rigid structure from motion \cite{kong-lucey-cvpr-2016} and outlier detection \cite{olsson-etal-cvpr-2010}, diffraction imaging \cite{SHI2020107350}, MRI tomography \cite{7361050} to name a few.

In \cite{carlsson2019unbiased,carlsson2020perfect} solutions of a given cardinality was recovered using $\Q_2(\mu\card)$ by searching over $\mu$. Note however that while it is 1-dimensional, the search criterion is not guaranteed to be unimodal and it is not clear over what range nor at what density one needs to sample in order not to miss the sought solution.

\subsection{Numerical Recovery Results}\label{sec:num2}

In \cite{candes2008enhancing} astonishing results are shown in the noise free case. For example in Figure 2 (of that paper) we see how $k=130$ non-zero entries are recovered using a matrix $A$ of size $m\times n= 256\times 512$, (which incidentally is close to the theoretical bound $2k\leq m$ in the present paper\footnote{Note indeed that the condition $\delta_{2k}^-<1$ is equivalent to any $2k$ columns of $A$ being linearly independent, which holds with probability 1 for Gaussian random matrices as long as $2k\leq m$.}). However, in the presence of noise, performance seems to drop drastically. In Figure 7 (of the same paper) we see an example where performance is evaluated with $k=8$, $m=72$ and $n=256$. 

\begin{figure}[htb]
     \includegraphics[width=.5\textwidth]{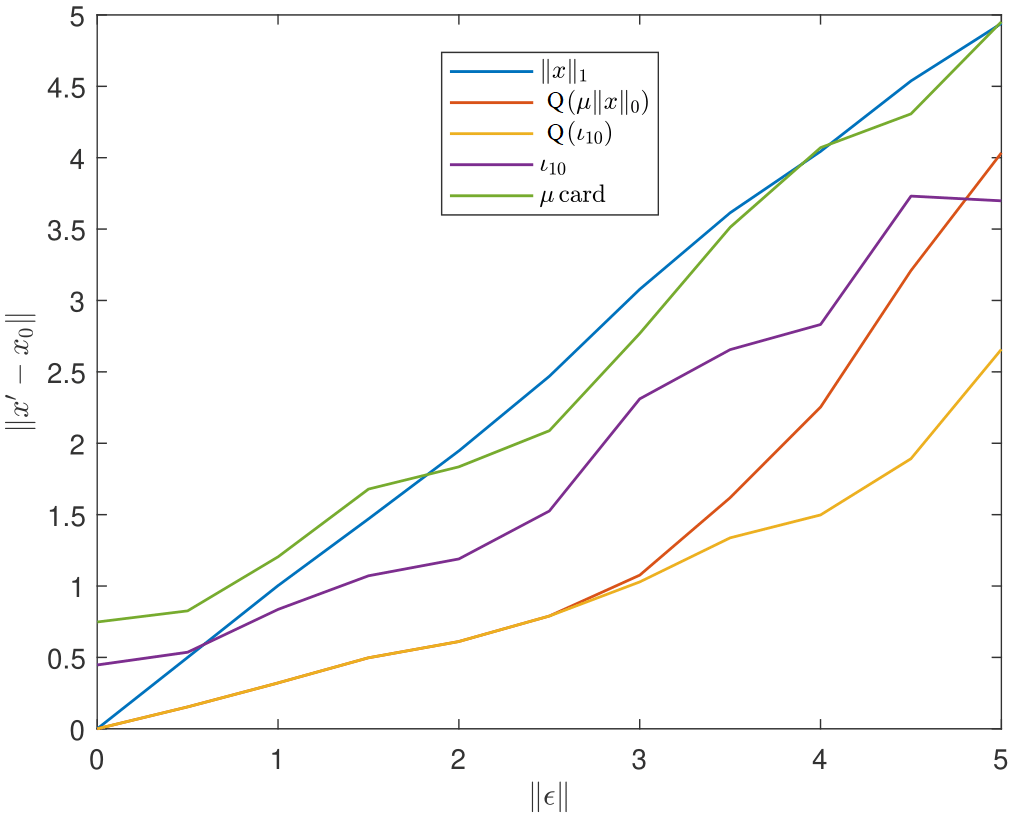}
     \includegraphics[width=.5\textwidth]{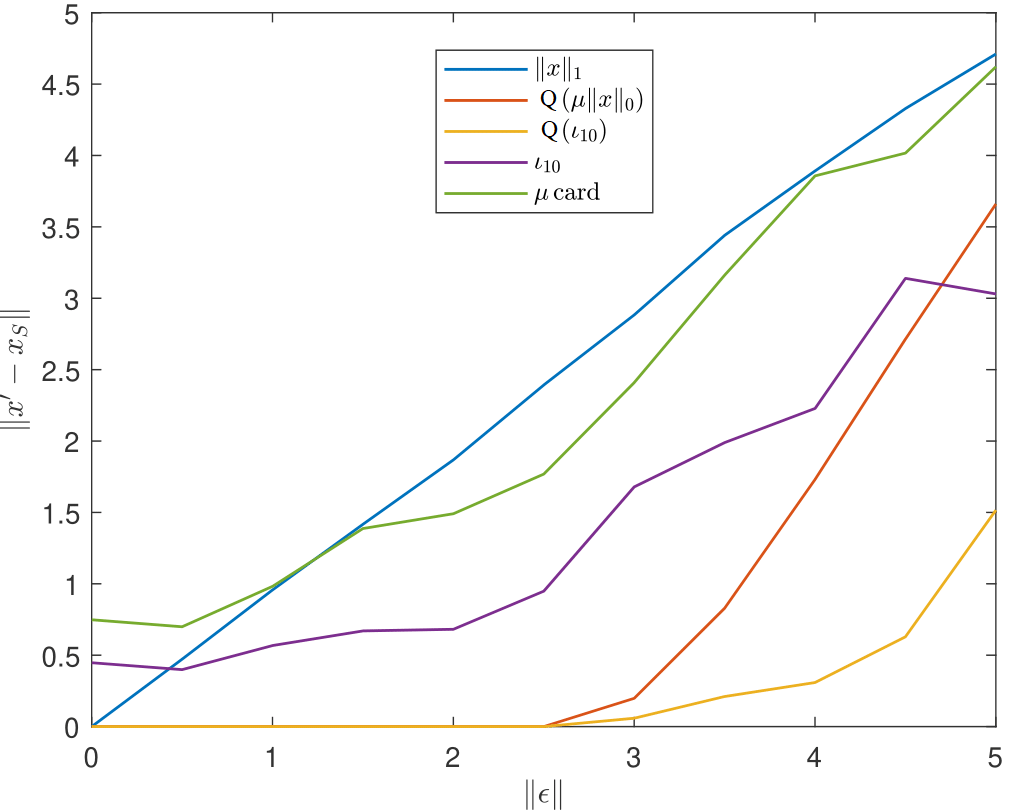}\\
 \caption{$\|x'-x_0\|_2$ (left) and $\|x'-x_S\|_2$ (right) versus $\|\epsilon\|_2$ for the 5 methods \eqref{l1probdual}, \eqref{q1}-\eqref{q1reg} and \eqref{q2}-\eqref{q2reg} minimized using with FBS. The methods based on $\Q_2(\card)$ and $\Q_2(\iota_{P_k})$ work perfectly down to $SNR\approx 4$.}\label{f4}
\end{figure}

Here we will present numerical results for the case of $k=10$, $m=100$ and $n=200$.
We use a matrix $A$ with Gaussian randomly generated columns, which are subsequently normalized, and solve problems \eqref{l1probdual}, \eqref{q1reg} and \eqref{q2reg} for $b=Ax_0+\epsilon$ for different levels of noise $\|\epsilon\|_2$ between 0 and 5. The vector $x_0$ has random entries between 2 and 4 in magnitude, and a total magnitude $\|x_0\|_2=11$. To solve the optimization problems we use FBS which is known to converge to a stationary point (by \cite{attouch2013convergence} in combination with Section 2.4 of \cite{carlsson2016convexification} or Section 6 of \cite{carlsson2018convex}).

We compare with $\ell^1$-minimization \eqref{l1probdual} as well as two forms of Iterative Hard Thresholding, which arise when applying FBS to the unregularized problems \eqref{q1} and \eqref{q2}. In the first case the proximal operator will simply threshold at $\sqrt{\mu}$ and in the latter threshold by keeping only the $k$ largest entries. Convergence of such algorithms are proven e.g.~in Section 5 of \cite{attouch2013convergence}, and convergence of the latter has also been shown in \cite{blumensath2009iterative} when $\delta_{3k}<0.18$. For this reason, we also included graphs for the result of minimizing \eqref{q1} and \eqref{q2} (labeled $\mu\card$ and $\iota_{10}$ in the plots). Each point on the respective curves is an average over 50 trials, where we have used 1000 iterations and with a step-size parameter of \(0.9/\|A\|^2\), which is close to the upper theoretical bound given in \cite{attouch2013convergence} (which coincides with the bound for the convex case, see e.g. \cite{combettes2005signal}).

To set the parameter $\lambda$ for the $\ell^1$-problem \eqref{l1probdual} we used the formula $$\lambda= \frac{\|\epsilon\|_2 }{\sqrt{n}}\sqrt{2 \log (n)}  $$ corresponding to the recommendations in Section 5.2 of \cite{chen2001atomic}. For \eqref{q1}-\eqref{q1reg} we used $\mu=1$ and $k$ was set to 10 for \eqref{q2}-\eqref{q2reg}, which we motivate as follows:

If the value of $\delta_{2k}^-$ is near 0, then the conditions in Corollary \ref{cor:doctorgadget} hold given that $2\sqrt{\mu}\lesssim\min \{|x_{0,j}|:|x_{0,j}|\neq 0\}$ where the latter in our case is $2.05$ and $\|\epsilon\|_2\leq \sqrt{\mu}$, whereas the conditions  in Corollary \ref{cor:doctorgadget2} hold as long as $3\|\epsilon\|_2\lesssim 2.05$. In both cases, the estimate for $\|x'-x_0\|_2$ reads $\|x'-x_0\|_2\lesssim \|\epsilon\|_2$ which is supposed to hold at least for $\|\epsilon\|_2\lesssim 2/3$. Despite the fact that $\delta_{2k}^-\approx 0$ is quite unlikely (as we saw in Section \ref{sec:size}), the graph in Figure \ref{f4} (left) indicates that the reality looks even better. Both algorithms find the oracle solution in 100\% of the trial for $\|\ep\|_2$ up to 2.5, and the true bound (for this particular example) seems to be $\|x'-x_0\|_2\lesssim \frac{1}{3}\|\epsilon\|_2$ for both \eqref{q1reg} and \eqref{q2reg}, whereas the true constant for $\ell^1$ is around 1 (despite $C_{10}=\infty$ as seen in Section \ref{sec:size}, as $\delta_{20}$ with high likelihood is greater than 0.4 \cite{blanchard-cartis-tanner-2011}).

 The first version of IHT, i.e.~minimization of the unregularized functional \eqref{q1}, is similar to $\ell^1$ in performance, whereas \eqref{q2} is slightly better. Comparing with \eqref{q1reg} and \eqref{q2reg} the benefits of using the quadratic envelope are undeniable. Note that all 3 methods work for noise-levels much greater than stipulated by the theory. We also remark that, rather surprisingly, there is no major difference between \eqref{q1reg} and \eqref{q2reg} for moderate noise levels. However, both these methods are designed to find the oracle solution $x_{or}$, not $x_0$, so to evaluate this performance we include in Figure \ref{f4} (right) also the graph of $\|x'-x_{or}\|_2$ versus $\|\epsilon\|_2$. From this we deduce that both work perfectly until $\|\epsilon\|_2=2.5$, but that \eqref{q1reg} deteriorates substantially faster beyond this point. In other words, in this example both methods based on $\Q_2(\mu \card)$ and $\Q_2(\iota_{P_{10}})$ work as expected down to $SNR$ around 4. In \cite{carlsson2020perfect} a much more thorough comparison between \eqref{l1probdual}, the two methods considered here, and other popular techniques such as Reweighted $\ell^1$ \cite{candes2008enhancing} and Huber-fitting \cite{selesnick2017sparse} is carried out. This paper also optimizes over hyperparameters, as opposed to fixing them a priori as in this section. 
We refrain from similar experiments here since this is a theoretical paper and the above experiment was designed to illustrate and validate the theory, not to compare optimal performance of algorithms.



Another issue that we have not discussed is the starting point. We have used $0$ for all examples above, and (a bit surprisingly) this seems to work better than using the least squares solution \(x_{LS}\) of $Ax=b$, which seems to have many local minima near it when we use $\Q_2(\mu\card)$. This is clearly seen in our final graph  \begin{wrapfigure}{r}{0.5\textwidth}
  \begin{center}
     \includegraphics[width=.5\textwidth]{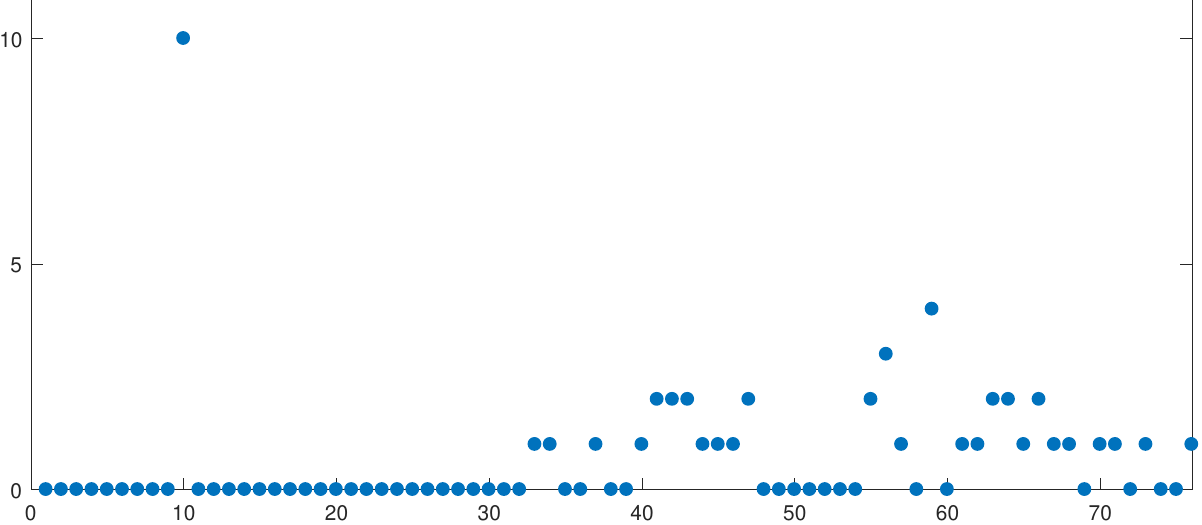}\\
  \end{center}
\vspace{-0.5cm} \caption{Histogram of cardinality for 50 trials of \eqref{q1reg} with $\|\epsilon\|_2=2.5$.}\label{f5}
\end{wrapfigure} where we plot a histogram of the cardinality of $x'$ over 50 trials with the noise level $\|\epsilon\|_2=2.5$, using $\Q_2(\card)$ and \(x_{LS}\) as starting point. Concerning $\Q_2(\card)$ it is interesting to note the following dichotomy, either the cardinality is around 10, or substantially larger, as predicted by Theorem \ref{thm:statpoint:vec}. For this noise level and starting point $x_{LS}$, $\Q_2(\iota_{10})$ still works perfectly, which is why its performance is excluded; the histogram hits 50 at $k=10$, in accordance with Corollary \ref{new}. Combined with Figure \ref{f4}, this underlines that when $k$ is known, $\Q_2(\iota_{10})$ is the best penalty.

\subsection{Implementation technicalities}\label{int tch}
Basically anywhere there is a method involving a sparsity inducing $\|x\|_1$-term, it can be easily replaced with $\Q_\gamma(\mu\card)$ or $\Q_{\gamma}(\iota_{P_k})$ if the model order is known. We encourage the reader to try these on his or her particular problem, and to facilitate this we here discuss briefly some implementational aspects and parameter choices. Code for evaluation of the corresponding proximal operators is available at the following GitHub repository: \begin{center} \texttt{https://github.com/Marcus-Carlsson/Quadratic-Envelopes} \end{center}   

First of all we note that it is often customary to put a factor $1/2$ in front of the quadratic term in \eqref{genprobS} and moreover the quadratic envelope depends on a parameter $\gamma$ which we have throughout kept fixed at 2. A more general version of \eqref{genprobS} would be
\begin{equation}\label{genprobSupp}
\Q_\gamma(f)(x)+\frac{1}{2}\|Ax-b\|_2^2, \quad \gamma>0.\end{equation}
To pass between various normalizations, we note that given any $\alpha>0$ one has $$\alpha \Q_{\gamma}(f)=\Q_{\alpha\gamma}(\alpha f),$$
so in particular \eqref{q1reg} is equivalent with $\Q_1(\frac{\mu}{2}\card)+\frac{1}{2}\|Ax-b\|_2^2$ and \eqref{q2reg} with $\Q_1(\iota_{P_k})+\frac{1}{2}\|Ax-b\|_2^2$, and the entire paper could as well have been written in this setting.

In order for the global minima of $f(x)+\frac{1}{2}\|Ax-b\|_2^2$ to not move when switching to \eqref{genprobSupp}, the general theory of \cite{carlsson2018convex} states that $\gamma$ should be less than $\|A\|^2$. In practice, this is too conservative. Reformulated in the general context \eqref{genprobSupp}, the condition $\|A\|_{\infty,col}\leq 1$
turns into $$\|A\|_{\infty,col}\leq \sqrt{\gamma},$$ so by this we should set $\gamma=\sqrt{\|A\|_{\infty,col}^2}$ in general. This is a much more realistic estimate in practice, but still it is given by a theoretical upper bound. We recall that $\gamma$ equals the maximum negative curvature of $\Q_\gamma(f)(x)$, and hence lowering the value of $\gamma$ makes the penalty ``less non-convex'', intuitively speaking. We have found that, for the problems considered in this paper, values of $\gamma$ as low as $0.3$ give better performance (i.e.~less chance of getting stuck in local minima), while still maintaining the property of finding the oracle solution. With that said, optimal parameter choices will be investigated elsewhere.

Concerning algorithms to minimize \eqref{genprobSupp}, we have found no significant difference between ADMM and FBS. The latter is guaranteed to converge to a stationary point when applied to \eqref{genprobS} (under mild assumptions). This follows by the main result of \cite{attouch2013convergence} combined with Section 6 of \cite{carlsson2018convex}. ADMM on the other hand, to our best knowledge, still lacks a proof of convergence at least for the non-separable penalty $\Q_{\gamma}(\iota_{P_k})$, (the separable case, which does apply to \eqref{q1reg}, is considered in \cite{wang2019global}). 

\section{Conclusions}
With the wealth of papers analyzing sparsity-inducing penalties, is there a need for yet another one? The existing literature can be divided into two groups, either the results are asymptotic in nature (hence say little in a concrete setting) or they assume that $\delta_{2k}$ (or some analogous quantity) is sufficiently small. As argued in Section \ref{sec:size}, for the case $m=2n$, this forces the sparsity $k$ to be well below than 1\% of $n$ to achieve $\delta_{2k}\approx 0.4$ or less. On the other hand, in many concrete applications $k$ is substantially larger, and so there is a vast regime where there is \textit{no theoretical support} for that either $\ell^1$-minimization \eqref{l1probdual} or IHT gets anywhere near the ground truth.

The majority of our results, on the other hand, applies as long as any $2k$ columns of $A$ are linearly independent, for then $\delta_{2k}^-<1$, with the natural catch that if $\delta_{2k}^-$ is poor then a large SNR is needed. This is a significant theoretical improvement; if we are in the range $k/n>0.01$, then $\ell^1$-minimization \eqref{l1probdual} is
\textit{convex} and therefore e.g.~FBS applied to it is guaranteed to converge to some point $x_{1}'$, but by the results of \cite{attouch2013convergence} and \cite{carlsson2018convex}, the same is true for \eqref{q1reg} and \eqref{q2reg}, it just may happen that the convergence point $x_2'$ is not the global minimum. However, whereas there is \textit{no support} for the hypothesis that $x_1'$ is anywhere near ground truth, the theorems of this paper states that \textit{if} $x_2'$ is the global minimum, then it is the oracle solution which is the best possible outcome (and else it may not be near ground truth, just like $x_1'$). This gives the two methods studied here a significant theoretical advantage over $\ell^1$-minimization, (or IHT or reweighted $\ell^1$ as well for that matter). Combined with the numerical section which demonstrates superior performance in the entire range, this paper challenges the $\ell^1$-penalty as the penalty of choice for compressed sensing and sparsity based methods in general.

Finally, this paper studies design of sparsity inducing functionals, \textit{not algorithms to find their global minima or stationary points.} We prove that the global minima, under verifiable conditions, is the oracle solution. The fact that both ADMM and FBS (with 0 as starting point) seems to converge to the global minima is a numerical observation whose proof we leave as an open question.

\bibliographystyle{plain}
\bibliography{bib_IP}

\begin{thebibliography}{10}

\bibitem{adcock2016generalized}
Ben Adcock and Anders~C Hansen.
\newblock Generalized sampling and infinite-dimensional compressed sensing.
\newblock {\em Foundations of Computational Mathematics}, 16(5):1263--1323,
  2016.

\bibitem{adcock2017breaking}
Ben Adcock, Anders~C Hansen, Clarice Poon, and Bogdan Roman.
\newblock Breaking the coherence barrier: A new theory for compressed sensing.
\newblock In {\em Forum of Mathematics, Sigma}, volume~5. Cambridge University
  Press, 2017.

\bibitem{andersson-etal-ol-2017}
Fredrik Andersson, Marcus Carlsson, and Carl Olsson.
\newblock Convex envelopes for fixed rank approximation.
\newblock {\em Optimization Letters}, pages 1--13, 2017.

\bibitem{attouch2013convergence}
Hedy Attouch, J{\'e}r{\^o}me Bolte, and Benar~Fux Svaiter.
\newblock Convergence of descent methods for semi-algebraic and tame problems:
  proximal algorithms, forward--backward splitting, and regularized
  gauss--seidel methods.
\newblock {\em Mathematical Programming}, 137(1-2):91--129, 2013.

\bibitem{bauschke2017convex}
Heinz~H Bauschke, Patrick~L Combettes, et~al.
\newblock {\em Convex analysis and monotone operator theory in Hilbert spaces},
  volume 2011.
\newblock Springer, 2017.

\bibitem{blanchard-cartis-tanner-2011}
Jeffrey~D. Blanchard, C.~Cartis, and J.~Tanner.
\newblock Compressed sensing: How sharp is the restricted isometry property?
\newblock {\em SIAM Review}, 53 (1):105 -- 125, 2011.

\bibitem{blumensath2008iterative}
Thomas Blumensath and Mike~E Davies.
\newblock Iterative thresholding for sparse approximations.
\newblock {\em Journal of Fourier analysis and Applications}, 14(5-6):629--654,
  2008.

\bibitem{blumensath2009iterative}
Thomas Blumensath and Mike~E Davies.
\newblock Iterative hard thresholding for compressed sensing.
\newblock {\em Applied and computational harmonic analysis}, 27(3):265--274,
  2009.

\bibitem{bredies2015minimization}
Kristian Bredies, Dirk~A Lorenz, and Stefan Reiterer.
\newblock Minimization of non-smooth, non-convex functionals by iterative
  thresholding.
\newblock {\em Journal of Optimization Theory and Applications},
  165(1):78--112, 2015.

\bibitem{breheny2011coordinate}
Patrick Breheny and Jian Huang.
\newblock Coordinate descent algorithms for nonconvex penalized regression,
  with applications to biological feature selection.
\newblock {\em The annals of applied statistics}, 5(1):232, 2011.

\bibitem{candes2008restricted}
Emmanuel~J Candes.
\newblock The restricted isometry property and its implications for compressed
  sensing.
\newblock {\em Comptes rendus mathematique}, 346(9-10):589--592, 2008.

\bibitem{candes-etal-acm-2011}
Emmanuel~J. Cand\`{e}s, Xiaodong Li, Yi~Ma, and John Wright.
\newblock Robust principal component analysis?
\newblock {\em J. ACM}, 58(3):11:1--11:37, 2011.

\bibitem{candes2006robust}
Emmanuel~J Cand{\`e}s, Justin Romberg, and Terence Tao.
\newblock Robust uncertainty principles: Exact signal reconstruction from
  highly incomplete frequency information.
\newblock {\em IEEE Transactions on information theory}, 52(2):489--509, 2006.

\bibitem{candes2006stable}
Emmanuel~J Candes, Justin~K Romberg, and Terence Tao.
\newblock Stable signal recovery from incomplete and inaccurate measurements.
\newblock {\em Communications on pure and applied mathematics},
  59(8):1207--1223, 2006.

\bibitem{candes2005decoding}
Emmanuel~J Candes and Terence Tao.
\newblock Decoding by linear programming.
\newblock {\em IEEE transactions on information theory}, 51(12):4203--4215,
  2005.

\bibitem{candes2008enhancing}
Emmanuel~J Candes, Michael~B Wakin, and Stephen~P Boyd.
\newblock Enhancing sparsity by reweighted $l^1$ minimization.
\newblock {\em Journal of Fourier analysis and applications}, 14(5-6):877--905,
  2008.

\bibitem{carlsson2020perfect}
Marcus Carlsson.
\newblock Perfect support recovery with lasso.

\bibitem{carlsson2016convexification}
Marcus Carlsson.
\newblock On convexification/optimization of functionals including an l2-misfit
  term.
\newblock {\em arXiv preprint arXiv:1609.09378}, 2016.

\bibitem{carlsson2018convex}
Marcus Carlsson.
\newblock On convex envelopes and regularization of non-convex functionals
  without moving global minima.
\newblock {\em Journal of Optimization Theory and Applications}, 183(1):66--84,
  2019.

\bibitem{carlsson2019biased}
Marcus Carlsson, Daniele Gerosa, and Carl Olsson.
\newblock An un-biased approach to low rank recovery.
\newblock {\em arXiv preprint arXiv:1909.13363}, 2019.

\bibitem{carlsson2019unbiased}
Marcus Carlsson, Jean-Yves Tourneret, and Herwig Wendt.
\newblock Unbiased group-sparsity sensing using quadratic envelopes.
\newblock In {\em 2019 IEEE 8th International Workshop on Computational
  Advances in Multi-Sensor Adaptive Processing (CAMSAP)}, pages 425--429. IEEE,
  2019.

\bibitem{chartrand2007exact}
Rick Chartrand.
\newblock Exact reconstruction of sparse signals via nonconvex minimization.
\newblock {\em IEEE Signal Processing Letters}, 14(10):707--710, 2007.

\bibitem{chen2001atomic}
Scott~Shaobing Chen, David~L Donoho, and Michael~A Saunders.
\newblock Atomic decomposition by basis pursuit.
\newblock {\em SIAM review}, 43(1):129--159, 2001.

\bibitem{combettes2005signal}
Patrick~L Combettes and Val{\'e}rie~R Wajs.
\newblock Signal recovery by proximal forward-backward splitting.
\newblock {\em Multiscale Modeling \& Simulation}, 4(4):1168--1200, 2005.

\bibitem{donoho2006most}
David~L Donoho.
\newblock For most large underdetermined systems of linear equations the
  minimal $\ell^1$-norm solution is also the sparsest solution.
\newblock {\em Communications on pure and applied mathematics}, 59(6):797--829,
  2006.

\bibitem{donoho2005stable}
David~L Donoho, Michael Elad, and Vladimir~N Temlyakov.
\newblock Stable recovery of sparse overcomplete representations in the
  presence of noise.
\newblock {\em IEEE Transactions on information theory}, 52(1):6--18, 2005.

\bibitem{elhamifar-vidal-pami-2013}
E.~{Elhamifar} and R.~{Vidal}.
\newblock Sparse subspace clustering: Algorithm, theory, and applications.
\newblock {\em IEEE Transactions on Pattern Analysis and Machine Intelligence},
  35(11):2765--2781, 2013.

\bibitem{fan2001variable}
Jianqing Fan and Runze Li.
\newblock Variable selection via nonconcave penalized likelihood and its oracle
  properties.
\newblock {\em Journal of the American Statistical Association},
  96(456):1348--1360, 2001.

\bibitem{fan2004nonconcave}
Jianqing Fan, Heng Peng, et~al.
\newblock Nonconcave penalized likelihood with a diverging number of
  parameters.
\newblock {\em The Annals of Statistics}, 32(3):928--961, 2004.

\bibitem{fan2014strong}
Jianqing Fan, Lingzhou Xue, and Hui Zou.
\newblock Strong oracle optimality of folded concave penalized estimation.
\newblock {\em Annals of Statistics}, 42(3):819, 2014.

\bibitem{feng-au-valaee-tan-2010}
C.~Feng, W.~S.~A. Au, S.~Valaee, and Z.~Tan.
\newblock Compressive sensing based positioning using rss of wlan access
  points.
\newblock {\em 2010 Proceedings IEEE INFOCOM}, pages 1 -- 9, 2010.

\bibitem{foucart2013invitation}
Simon Foucart and Holger Rauhut.
\newblock An invitation to compressive sensing.
\newblock In {\em A mathematical introduction to compressive sensing}, pages
  1--39. Springer, 2013.

\bibitem{kong-lucey-cvpr-2016}
C.~{Kong} and S.~{Lucey}.
\newblock Prior-less compressible structure from motion.
\newblock In {\em 2016 IEEE Conference on Computer Vision and Pattern
  Recognition (CVPR)}, pages 4123--4131, 2016.

\bibitem{larsson-olsson-ijcv-2016}
Viktor Larsson and Carl Olsson.
\newblock Convex low rank approximation.
\newblock {\em International Journal of Computer Vision}, 120(2):194--214,
  2016.

\bibitem{loh2013regularized}
Po-Ling Loh and Martin~J Wainwright.
\newblock Regularized m-estimators with nonconvexity: Statistical and
  algorithmic theory for local optima.
\newblock In {\em Advances in Neural Information Processing Systems}, pages
  476--484, 2013.

\bibitem{loh2017support}
Po-Ling Loh, Martin~J Wainwright, et~al.
\newblock Support recovery without incoherence: A case for nonconvex
  regularization.
\newblock {\em The Annals of Statistics}, 45(6):2455--2482, 2017.

\bibitem{mazumder2011sparsenet}
Rahul Mazumder, Jerome~H Friedman, and Trevor Hastie.
\newblock Sparsenet: Coordinate descent with nonconvex penalties.
\newblock {\em Journal of the American Statistical Association},
  106(495):1125--1138, 2011.

\bibitem{natarajan1995sparse}
Balas~Kausik Natarajan.
\newblock Sparse approximate solutions to linear systems.
\newblock {\em SIAM journal on computing}, 24(2):227--234, 1995.

\bibitem{nikolova2013description}
Mila Nikolova.
\newblock Description of the minimizers of least squares regularized with
  $\ell_0$-norm. uniqueness of the global minimizer.
\newblock {\em SIAM Journal on Imaging Sciences}, 6(2):904--937, 2013.

\bibitem{nikolova2016relationship}
Mila Nikolova.
\newblock Relationship between the optimal solutions of least squares
  regularized with $\ell_0$-norm and constrained by k-sparsity.
\newblock {\em Applied and Computational Harmonic Analysis}, 41(1):237--265,
  2016.

\bibitem{olsson-etal-cvpr-2010}
C.~{Olsson}, A.~{Eriksson}, and R.~{Hartley}.
\newblock Outlier removal using duality.
\newblock In {\em 2010 IEEE Computer Society Conference on Computer Vision and
  Pattern Recognition}, pages 1450--1457, 2010.

\bibitem{pan2015relaxed}
Zheng Pan and Changshui Zhang.
\newblock Relaxed sparse eigenvalue conditions for sparse estimation via
  non-convex regularized regression.
\newblock {\em Pattern Recognition}, 48(1):231--243, 2015.

\bibitem{qaisar-bilal-iqbal-naureen-lee-2013}
Saad Qaisar, Rana~Muhammad Bilal, Wafa Iqbal, Muqaddas Naureen, and Sungyoung
  Lee.
\newblock Compressive sensing: From theory to applications, a survey.
\newblock {\em Journal of Communications and Networks}, 15(5):443 -- 456, 2013.

\bibitem{selesnick2017sparse}
Ivan Selesnick.
\newblock Sparse regularization via convex analysis.
\newblock {\em IEEE Transactions on Signal Processing}, 65(17):4481--4494,
  2017.

\bibitem{7361050}
B.~{Shi}, Q.~{Lian}, and S.~{Chen}.
\newblock Compressed sensing magnetic resonance imaging based on dictionary
  updating and block-matching and three-dimensional filtering regularisation.
\newblock {\em IET Image Processing}, 10(1):68--79, 2016.

\bibitem{SHI2020107350}
Baoshun Shi, Qiusheng Lian, and Huibin Chang.
\newblock Deep prior-based sparse representation model for diffraction imaging:
  A plug-and-play method.
\newblock {\em Signal Processing}, 168:107350, 2020.

\bibitem{simon2005trace}
Barry Simon.
\newblock Trace ideals and their applications, volume 120 of mathematical
  surveys and monographs.
\newblock {\em American Mathematical Society, Providence, RI,}, 2005.

\bibitem{soubies-etal-siims-2015}
Emmanuel Soubies, Laure Blanc-F\'eraud, and Gilles Aubert.
\newblock A continuous exact l0 penalty (cel0) for least squares regularized
  problem.
\newblock {\em SIAM Journal on Imaging Sciences}, 8(3):1607--1639, 2015.

\bibitem{wainwright2009sharp}
Martin~J Wainwright.
\newblock Sharp thresholds for high-dimensional and noisy sparsity recovery
  using $l_1$-constrained quadratic programming (lasso).
\newblock {\em IEEE transactions on information theory}, 55(5):2183--2202,
  2009.

\bibitem{wang2019global}
Yu~Wang, Wotao Yin, and Jinshan Zeng.
\newblock Global convergence of admm in nonconvex nonsmooth optimization.
\newblock {\em Journal of Scientific Computing}, 78(1):29--63, 2019.

\bibitem{wang2014optimal}
Zhaoran Wang, Han Liu, and Tong Zhang.
\newblock Optimal computational and statistical rates of convergence for sparse
  nonconvex learning problems.
\newblock {\em Annals of Statistics}, 42(6):2164, 2014.

\bibitem{wright-etal-pami-2009}
John Wright, Allen~Y. Yang, Arvind Ganesh, S.~Shankar Sastry, and Yi~Ma.
\newblock Robust face recognition via sparse representation.
\newblock {\em IEEE Trans. Pattern Anal. Mach. Intell.}, 31(2):210–227,
  February 2009.

\bibitem{zhang2010nearly}
Cun-Hui Zhang et~al.
\newblock Nearly unbiased variable selection under minimax concave penalty.
\newblock {\em The Annals of Statistics}, 38(2):894--942, 2010.

\bibitem{zhang2012general}
Cun-Hui Zhang and Tong Zhang.
\newblock A general theory of concave regularization for high-dimensional
  sparse estimation problems.
\newblock {\em Statistical Science}, pages 576--593, 2012.

\bibitem{zou2008one}
Hui Zou and Runze Li.
\newblock One-step sparse estimates in nonconcave penalized likelihood models.
\newblock {\em Annals of Statistics}, 36(4):1509, 2008.

\end{thebibliography}

\section{Appendix}
\subsection{Appendix to Section \ref{sec:unique}} \label{app:prel}

While it is possible to deal with gradients and subdifferentials in $\C^n$ by simply identifying it with $\R^{2n}$ in the canonical way, the calculus becomes more intuitive if avoid this step. Instead, we say that a function $g_d:\C^n\rightarrow \R$ is differentiable at a point $x$ is there is a vector $v\in\C^n$ such that \begin{equation}\label{defgrad}\lim_{\|y\|\rightarrow 0^+}\frac{g_d(x+y)-g_d(x)-\re\scal{y,v}}{\|y\|}=0.\end{equation} In this case we write $v=\nabla g_d(x)$.
For example, consider the function $g_d(x)=\|Ax-b\|^2$. Upon noting that $g_d(x+y)=\|Ax-b\|^2+2\re\scal{y,A^*(Ax-b)}+\|Ay\|^2$, it readily follows that $\nabla g_d(x)=2A^*(Ax-b)$.
Similarly, if $g_c$ is convex and $v$ is a vector such that $$g_c(x+y)-g_c(x)-\re\scal{y,v}\geq 0$$ for all $y$, we say that $v$ is in the subdifferential of $g_c$ which we denote by $v\in\partial g_c(x)$.

Let us establish the claim following \eqref{defstationary}, i.e.~that a function $g$ of the type $g_c+g_d$ for functions as above has a stationary point at $x$ if and only if $-\nabla g_d(x)\in\partial g_c(x)$. The condition \eqref{defstationary} for stationarity translates to
$$0\leq \underset{y\neq 0}{\liminf_{y\rightarrow x}}~ \frac{g(x+y)-g(x)}{\|y\|}=\underset{y\neq 0}{\liminf_{y\rightarrow x}}~ \frac{g_c(x+y)-g_c(x)+\re\scal{y,\nabla g_d(x)}}{\|y\|}.$$
To see this, just add and subtract $\re\scal{y,\nabla g_d(x)}$ to the numerator and invoke \eqref{defgrad}. It immediately follows that if $-\nabla g_d(x)\in\partial g_c(x)$ holds then $x$ is stationary. Conversely, suppose that $x$ is stationary. If there exists a $y$ such that $g_c(x+y)-g_c(x)+\re\scal{y,\nabla g_d(x)}<0$, then for $t\in[0,1]$ we have by convexity that $g_c(x+ty)\leq tg_c(x+y)+(1-t)g_c(x)$ so $$g_c(x+ty)-g_c(x)+\re\scal{ty,\nabla g_d(x)}\leq t(g_c(x+y) -g_c(x)+\re\scal{y,\nabla g_d(x)})$$
by which it follows that the above $\liminf$ must also be $<0$, a contradiction.

\subsection{Appendix to Section \ref{s4}}

The full statement of Theorem \ref{celok5} follows by combining the below four lemmas. For concreteness assume that we work over $\C^n$.

\begin{lemma}\label{l4}
Without any restriction on $A$, the functional $\kfi$ attains its infimum.
\end{lemma}
\begin{proof} Fix \( 1 \le j \le n  \) and consider submatrices \( A(:,J) \) of \(A\) with \(m\) rows and \(j\) columns, \(J \subseteq \{ 1, \dots , n \} \) and \( \#J=j \); \(J\) determines which columns of \(A\) are selected. Now for each fixed \( J \), the minimum of $ \| A(:,J) x -b \|^2 _2 $ is attained and can be computed by solving the normal equations. Let $c_j$ be a corresponding vector in $\C^n$ with zeroes off $J$, such that  $\| A c_{j} -b \|^2 _2   $ equals the minimum in question. Among the $\{ c_{J}\}_{\#J=j}$ we denote by $c_j$ one that satisfies \[ \|Ac_j - b \|^2 _2 = \min_{ \#J=j} \min_{x \in \mathbb{C}^j} \| A(:,J) x - b \|^2 _2.    \] If \( I= \inf \mathcal{K}_\mu (x)  \) we can select a sequence \( x_i \in \mathbb{C}^n  \) such that \( \mathcal{K}_\mu (x_i) \to I  \). By construction it must be \[ \mathcal{K}_\mu ( c_{\text{card}(x_i)}  ) \le \mathcal{K}_\mu (x_i). \] Since \( \mathcal{K}_\mu (x_i) \) is arbitrarily close to \(I\) and the \(c_j\) are finite, it must exist a \( \bar{j}  \) - at least one - such that \( \mathcal{K}_\mu (c_{\bar{j}})=I. \)  \end{proof}

\begin{lemma}\label{l5}
If $\|A\|_{\infty,col}\leq 1$, the functional $\kfir$ attains its infimum, which equals that of $\kfi$.
\end{lemma}
\begin{proof}
In the light of the basic inequality $\kfir\leq \kfi$ and the previous lemma, the two infima can only be different if there exists a point $x_0$ such that $\kfir(x_0)<\inf \kfi$. We prove by contradiction that this is impossible. In particular $\kfir(x_0)<\kfi(x_0)$, which implies that $\Q_2(\mu\card)(x_0)<\mu\card(x_0)$ since the quadratic terms are the same. This in turn implies (by \eqref{l0000}) that there must be some index $j$ such that the corresponding value in $\Q_2(\mu\card)(x_0)$ is different from $\mu\card (x_{0,j})$, which happens if and only if \begin{equation}\label{py1}0<|x_{0,j}|<\sqrt\mu.\end{equation} Let $e_j$ equal $1$ in coordinate $j$ and zero elsewhere and consider $$t\mapsto \kfir(x_0+t \frac{x_{0,j}}{|x_{0,j}|}e_j)$$ for real $t$ such that $0<|x_{0,j}|+t<\sqrt\mu$. This must be a quadratic polynomial, again by inspection of \eqref{l0000}, which also gives that
\begin{equation}\label{py}\frac{d^2}{dt^2}\kfir(x_0+ t \frac{x_{0,j}}{|x_{0,j}|} e_j)\Big|_{t=0}=-2+2\|a_j\|_2^2\leq 0.\end{equation} Hence this quadratic polynomial attains its minimum over the stated range at an endpoint.

It follows that we can redefine $x_{0,j}$ to equal either $0$ or $\sqrt{\mu}$, so that the resulting point $x_1$ satisfies $\kfir(x_1)\leq \kfi(x_0)$. We can now continue like this for another index $j$ such that \eqref{py1} holds (if it exists), and this process must terminate after finitely many steps $N$. Denoting the resulting point by $x_N$, we see that it satisfies $\kfi(x_N)=\kfir(x_N)<\inf \kfi$, a contradiction. Hence $\inf\kfi=\inf\kfir$.

Let $x_0$ be a point where the first infimum is attained. Then $\kfir(x_0)\leq \kfi(x_0)$ so we must have identity and hence the infimum of $\kfir$ is also attained.

\end{proof}

\begin{lemma}\label{l6}
Let $\|A\|_{\infty,col}\leq 1$ and let $x_0$ be a global minima of $\kfir$ which is not a global minima for $\kfi$. Then it belongs to a connected set of global minima of $\kfir$ including at least two global minima of $\kfi$.
\end{lemma}
\begin{proof}
By repetition of the previous proof we conclude that the first and second derivative of $\kfir(x_0+te_j)$ must be equal to 0, so the quadratic polynomial is constant in the range $0<|x_{0,j}|+t<\sqrt\mu$. Setting $t$ to be one fo the endpoints gives two new global minimizers $x_1$ with either $x_{1,j}=0$ or $|x_{1,j}|=\sqrt{\mu}$. Either $x_1$ is a minimizer of $\kfi$ or we can continue the process with another subindex. The result now easily follows.
\end{proof}

\begin{lemma}\label{l7}
Let $\|A\|_{\infty,col}< 1$, then any local minima of $\kfir$ is a local minima of $\kfi$. In particular, the sets of global minimizers coincide.
\end{lemma}
\begin{proof}
Let $x_0$ be a local minimizer of $\kfir$ but not of $\kfi$. We again repeat the arguments in Lemma \ref{l5}, but this time we get strict inequality in \eqref{py}, which is impossible. Hence such minimizers do not exist.

If now $x_0$ is a global minimizer to $\kfir$ then it is a local minimizer of $\kfi$, which in the light of $\kfi\geq\kfir$ means that it is a global minimizer, and the proof is complete.
\end{proof}

\subsection{Appendix to Section \ref{Kfeas}}\label{kfesappendix}

The proof will follow after a collection of minor results.

\begin{proposition}\label{propnegscal}
For any $m+2$ vectors $v_1,\ldots,v_{m+2}$ in $\R^m$, we can always pick two such that $\scal{v_i,v_j}\geq 0$.
\end{proposition}
We note that the proposition is sharp since the $m+1$ vortices of a simplex in $\R^m$ do have negative scalar products.
\begin{proof}
This follows from a simple induction argument. It is indeed clear in \( \mathbb{R} \). Suppose now we take \( m+2 \) vectors \( v_i\) such that \( \langle v_i , v_j \rangle < 0  \) if \( i \ne j  \). If \(V\) is the hyperplane perpendicular to \( v_{m+2}  \), the projections \( v_1', \dots , v_{m+1}' \) of \( v_1, \dots , v_{m+1} \) on \( V  \) must also have negative scalar products (since the projections onto $v_{m+2}$ always are in the same direction, opposite that of $v_{m+2}$, so $\scal{v_i',v_j'}<\scal{v_i,v_j}$). Since \( V\) is an \((m-1)\)-dimensional vector space, the desired result is immediate by induction.
\end{proof}

Recall that $a_1,\ldots, a_n$ denote the columns of $A$.
\begin{lemma}\label{mfisR}
Let $T\subset\{1,\ldots,n\}$ have cardinality $\#T\geq n-k$ and consider $\{a_j\}_{j\in T}.$ Under assumption (A1-A2), we can pick indices $i,j\in T$ such that $\|a_i-a_j\|^2< 2$. \end{lemma}

\begin{proof}
We first consider the real case $\R^m$. Then $\#T\geq m+2$ by (A1) and since $\|a_i-a_j\|^2=\|a_i\|^2-2\re\scal{a_i,a_j}+\|a_j\|^2$, the result is immediate by (A2) and Proposition \ref{propnegscal}. Finally, since $\C^m$ is isomorphic with $\R^{2m}$, the corresponding result in the complex case follows analogously, since now $\#T\geq 2m+2$ by (A1).
\end{proof}
Armed with the above statements we can now start to characterize global minimizers of $\ksir$, which is annoyingly difficult. It is even difficult to prove that they exist, so as a first step we shall restrict attention to a closed ball. Recall that $\D$ denotes either the unit disc in $\C$ or, if we work over the reals, the interval $[-1,1]$.
\begin{lemma}\label{ko0}
There exists an $R_0>0$ such that for any $R>R_0$, any global minimum $x'$ of $\ksir$ restricted to $(R\D)^n$ must satisfy $$|\tilde x'_{k+1}|\leq\frac{R}{2}.$$
\end{lemma}
\begin{proof}
Introduce $$U=\left\{x\neq 0:~|\tilde x_{k+1}|\geq\frac{1}{2}|\tilde x_1|\right\}.$$ We first note that $\Q_2(\iota_{P_k})(x)>0$ for all $x\in P_k^c$, which follows by the definition (see \eqref{Qdef}), so in particular this holds for all $x\in U$. Define \begin{equation}\label{olbia}\alpha =\inf\left\{\Q_2(\iota_{P_k})(x): ~x\in U,~\|x\|_2=1\right\}.\end{equation}
Since we are minimizing a continuous (non-zero) positive function over a compact set, $\alpha>0$. Let us write $s=s_x$ for the function defined in \eqref{lust}, when there is a need to make the dependence on $x$ clear. The function $s$ is radially dependent, i.e.~$s_{tx}=t s_x$ for $t\in\R$, and hence $k_*$ is radially independent. Looking at the expression for $\Q_2(\iota_{P_k})$ we see that $$\Q_2(\iota_{P_k})(tx)=t^2\Q_2(\iota_{P_k})(x)\quad t\in \R.$$
Note that $\ksi(0)=\ksir(0)=\|b\|_2^2$ so the global minimum of $\ksir$ is less than or equal to this. Let $R_0$ be such that $\alpha (R_0/2)^2>\|b\|_2^2$. If $x\in U$ satisfies  $\|x\|_2>(R_0/2)$, then $$\ksir (x)\geq \Q_2(\iota_{P_k})(x)\geq \alpha \|x\|^2_2>\|b\|_2^2$$ so it follows that such a point is no global minimizer (at least not on any set containing 0).

Now let $R$ and $x'$ be as stated in the lemma. If $|\tilde x_{k+1}'|> R/2$ then clearly $\|x'\|_2\geq R_0/2$ so $x'$ can not be in $U$. But then $ \frac{1}{2}|\tilde x_1'|>|\tilde x_{k+1}'| > R/2$ which means that $|\tilde x_1'|$ is outside of $R\D$. This is impossible, so the proof is complete.
\end{proof}

We define an angle of a complex number $z$ to be any number $\alpha_z$ such that $z=|z|e^{i\alpha_z}$. While this is unique modulo $2\pi$ for $z\neq 0$, it can be any number for $z=0$. Recall that $e_1,\ldots,e_n$ denotes the canonical basis in $\R^n$ (or $\C^n$).

\begin{lemma}\label{ko1}
Let $x$ be any vector and let $p,q\in \{1,\ldots,n\}$ be different indices such that $|x_p|\leq |\tilde x_{k+1}|$ and $0<|x_q|\leq |\tilde x_{k+1}|$ holds. Fix corresponding angles $\alpha_p$ and $\alpha_q$ and set
$$x(t)=x+te^{i \alpha_q} e_p-t e^{i\alpha_q}e_q.$$
Then $\Q_2(\iota_{P_k})(x(t))$ is twice differentiable at 0 and $$\frac{d^2}{dt^2}\Q_2(\iota_{P_k})(x(t))\Big|_{t=0}=-4.$$
\end{lemma}
\begin{proof}
This is relatively easy to see in the case when $|x_p|$ and $|x_q|$ are strictly less than $|\tilde x_k|$, so we first assume this. Then the two points where $t$ show up in the sequence $\tilde x(t)$ are beyond $k$, assuming $t$ is kept small enough. For any $l\geq 1$ we then have that \begin{equation}\label{india}\sum_{j>k-l}|\tilde x_j(t)|=\sum_{j>k-l}|\tilde x_j|,\end{equation} because the left hand side includes one term like $|\tilde x_p| +t$ and one term like $|\tilde x_q|-t$, which therefore cancel out. (This is were we used $|x_q|>0$). Looking at the expression \eqref{lust} which is used to determine $k_*$, we see that all the values $s_{x(t)}(l)$ are unaffected by small $t$, and hence $k_*$ is unaffected by $t$ (as long as it is small enough). Now, the first part of the expression \eqref{lust1} for $\Q_2(\iota_{P_k})(x(t))$ also contain \eqref{india} (for the particular value $l=k_*$), and hence this is constant. The second part equals $$-\sum_{j>k-k_*}|\tilde x_j(t)|^2=-\sum_{j>k-k_*}|\tilde x_j|^2-2|x_p|t+2|x_q|t-2t^2,$$ whose second derivative at 0 equals -4, as was to be shown.

Now assume that $|x_p|$ or $|x_q|$ (or both) equals $|\tilde x_k|$. The conclusion will follow as above, once we verify that $i)$ $k_*$ is invariant for small $t$ and, $ii)$ both terms with $t$ in them appear in $\{|\tilde x_j(t)|\}_{j>k-k_*}$.

To see $i)$, let $a$ be the largest integer such that $|\tilde x_{k+1-a}|=|\tilde x_k|$ and note that $s_x(1)>0$ since we have assumed $|\tilde x_{k+1}|>0$. Moreover, by inspection of \eqref{lust} we have that $s_x(l)=s_x(1)$ for all $1\leq l \leq a$, so $k_*\geq a$. By this it follows, if we write $k_*(t)$ for the $k_*$ associated with $x(t)$, that we also have $k_*(t)\geq a$ for small $t$, by continuity. Moreover both terms with $t$'s show up in $\{|\tilde x_j(t)|\}_{j>k-l}$ for all $l\geq a$, so for such $l$ we have that $s_{x(t)}(l)$ is unaffected by small $t$'s by the same cancelation effects as in \eqref{india}. By this we finally conclude that $k_*(t)$ is constant in a neighborhood of 0, i.e.~$i)$. Since we also know $k_*\geq a$, $ii)$ follows as well by what was written above. The proof is complete.
\end{proof}

\begin{proof}[Proof of Theorem \ref{celok6}]
Let $x'$ be a local minimizer of $\ksir$, and assume that $x'\not\in P_k$. We first assume that all values $x'_j$ are non-zero, and let $\alpha_j$ be corresponding angles. The set $T=\{j:~|x_j|\leq |\tilde x_{k+1}|\}$ clearly satisfies $\#T\geq n-k$, so we can use Lemma \ref{mfisR} on the matrix with columns $\{e^{i\alpha_j}a_j\}_{j=1}^n$ to pick two indices $p$ and $q$ such that \begin{equation}\label{gp}\fro{e^{i\alpha_p}a_p-e^{i\alpha_q}a_q}<2.\end{equation}
By the choice of $T$, we also have that Lemma \ref{ko1} applies. Let $x(t)$ be as in that lemma. It then follows that $\frac{d^2}{dt^2}\ksir(x(t))$ exists at 0 and equals
$$-4+2\fro{e^{i\alpha_p}a_p-e^{i\alpha_q}a_q}<0.$$
This contradicts the assumption that $x$ is outside $P_k$, which hence must be false.

We still need to consider the case when some values $x'_j$ are 0. In this case we pick $x_q$ as in Lemma \ref{ko1} and we let $p$ be any index such that $x_p=0$. The angle $\alpha_p$ can now be chosen such that \eqref{gp} holds, which leads to a contradiction as before.

It is now established that all local minimizers of $\ksir$ lie in $P_k$, and clearly they are also local minimizers of $\ksi$ in view of $\ksi\geq\ksir$ and the fact that these two coincide on $P_k$. Next we turn to prove that they exist. Fix $R>R_0$ as in Lemma \ref{ko0} and let $x'$ be a global minimizer of $\ksir$ in $(R\D)^n$. By the lemma we have $|\tilde x'_{k+1}|<R/2$, so any perturbation $x(t)$ as considered in Lemma \ref{ko1} stays within $(R\D)^n$. With this at hand, we conclude as above that $x'\in P_k$.

However, on $P_k$ both $\ksir(x)$ and $\ksi(x)$ coincide with simply $\|Ax-b\|^2$, the minimum of which is attained by the proof of Lemma \ref{l4}. We conclude that $\ksir$ do attain its global minima, and that $\ksir$ and $\ksi$ share global minimizers.

\end{proof}

\end{document}